\newtheorem{thm}{Theorem}[section]
\newtheorem{prop}[thm]{Proposition}
\newtheorem{lem}[thm]{Lemma}
\newtheorem{cor}[thm]{Corollary}
\newtheorem{conj}[thm]{Conjecture}
\newtheorem{claim}[thm]{Claim}
\theoremstyle{definition}
\newtheorem{definition}[thm]{Definition}
\newtheorem{example}[thm]{Example}
\theoremstyle{remark}
\newtheorem{remark}[thm]{Remark}
\numberwithin{equation}{section}
\newcommand{\bR}{\mathbb{R}}
\newcommand{\bP}{\mathbb{P}}
\newcommand\OO{{\mathcal{O}}}
\newcommand\0{{\mathbf{0}}}
\newcommand\n{{\mathbf{N}}}
\newcommand\ZZ{{\mathbb{Z}}}
\newcommand{\emb}{\operatorname{emb}}
\newcommand{\mld}{\operatorname{mld}}
\newcommand{\inte}{\operatorname{int}}
\newcommand{\Vol}{\operatorname{Vol}}
\newcommand{\vol}{\operatorname{vol}}
\newcommand{\vex}{\operatorname{vert}}
\newcommand{\conv}{\operatorname{conv}}
\newcommand{\ps}{\operatorname{PS[q]}}
\begin{document}

\title{Optimal upper bounds for anti-canonical volumes of singular toric Fano varieties}
\date{\today}

\author{Yu Zou}
\address{Yu Zou, College of Mathematics and Statistics, Chongqing University, Chongqing, 401331, China}
\email{fishlinazy@cqu.edu.cn}

\begin{abstract}
Fix two positive integers $d\geq3$ and $q$. We give an upper bound for anti-canonical volumes of $d$-dimensional $\frac{1}{q}$-lc toric Fano varieties, which corresponds to an upper bound for the dual normalized volumes of the associated $d$-dimensional $\frac{1}{q}$-lc Fano polytopes. And we also construct examples to show that these upper bounds are optimal. Besides, we provide an optimal upper bound for volumes of $d$-dimensional lattice simplices $S$ such that $\frac{1}{q}S$ has exactly one interior lattice point.
\end{abstract}

\keywords{toric Fano variety, anti-canonical volume, lattice polytope}
\subjclass[2020]{14J45, 14M25, 52B20}

\maketitle
\pagestyle{myheadings} \markboth{\hfill Y.~Zou
\hfill}{\hfill Optimal upper bounds for anti-canonical volumes \hfill}

\tableofcontents

\section{Introduction}
\subsection{Upper bounds for anti-canonical volumes of Fano varieties}
Fano varieties are among one of the most important study objects in algebraic geometry. Fix a positive integer $d$ and a positive real number $\epsilon$. It is known that $d$-dimensional $\epsilon$-lc Fano varieties form a bounded family (see \cite{Bir21} and see \cite{BB93} for toric Fano varieties). Motivated by the classification of Fano varieties, it is very important to find the optimal upper bound for anti-canonical volumes of $d$-dimensional $\epsilon$-lc Fano varieties. 

For dimensions $d\leq2$, the optimal upper bounds have already been known (see \cite{Jiang13} for the surface case). For dimension $d=3$, \cite{JZ24} gives the upper bound $\frac{3200}{\epsilon^4}$ for the anti-canonical volume of any $\epsilon$-lc Fano threefold when $0<\epsilon<\frac{1}{3}$. Though this upper bound is not optimal (which can be seen from the proof of this result), the order $\OO(\frac{1}{\epsilon^4})$ is sharp due to \cite[Example~6.3]{Ambro16}. So it is reasonably to conjecture that, in each dimension $d\geq4$, the optimal upper bound for the anti-canonical volumes of $d$-dimensional $\epsilon$-lc Fano varieties has the same order with $\frac{1}{\epsilon^{2^d-d-1}}$. 

For two positive integers $d\geq3$ and $q$, the main goal of this paper is to figure out an optimal upper bound for the anti-canonical volumes of $d$-dimensional $\frac{1}{q}$-lc toric Fano varieties, with order $\OO(q^{2^{d}-d-1})$. 

Let $u_{i,q} (i\in\ZZ_{>0})$ be the integer number defined recursively as the following:$$u_{1, q}=q, u_{k+1, q}=u_{k, q}(u_{k, q}+1),\forall k\in\ZZ_{>0}.$$
Then $u_{i, q}$ can be seen as a polynomial in $q$ with the leading term $q^{2^{i-1}}$. For any $p\in\ZZ_{\geq2}$, the set $\{u_{1, q}, \cdots, u_{p, q}\}$ satisfies the following properties:
\begin{align}
    \frac{q}{1+u_{1, q}}+ \cdots +\frac{q}{1+u_{p-1, q}}+\frac{q}{u_{p, q}}=1\label{sum1}\\
    \Pi_{i=1}^{p-1}\frac{1}{1+u_{i, q}}=\frac{q}{u_{p, q}}.\label{product}
\end{align}
In particular, $\{1+u_{i, 1}\}_{i\in\ZZ{>0}}$ is the Sylvester sequence.  

\begin{thm}\label{main thm}
    Fix two integers $d\geq3$ and $q>0$. Let $X$ be a $d$-dimensional $\frac{1}{q}$-lc toric Fano variety. Then $$\Vol(-K_X)\leq\frac{2u_{d, q}^2}{q^{d+1}},$$ and for $q\geq2$, the equality holds if and only if $$X\simeq\bP(\frac{2u_{d, q}}{1+u_{1, q}}, \cdots, \frac{2u_{d, q}}{1+u_{d-1, q}}, 1, 1).$$
\end{thm}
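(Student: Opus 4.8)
The plan is to translate the statement into the language of lattice polytopes and then run an induction on the dimension $d$. By the standard toric dictionary, a $d$-dimensional $\frac{1}{q}$-lc toric Fano variety $X$ corresponds to a $d$-dimensional Fano polytope $P$ (the convex hull of the primitive ray generators of the fan), the $\frac{1}{q}$-lc condition translates into the statement that every lattice point in the interior of $qP$ lies on the boundary of $P$ — equivalently, the dual polytope $P^\ast$ is a $\frac{1}{q}$-lc Fano polytope in the sense that $\frac1q P^\ast$ has exactly one interior lattice point (the origin) — and $\Vol(-K_X)$ equals the (suitably normalized) volume of $P^\ast$, i.e.\ the dual normalized volume. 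So Theorem~\ref{main thm} is equivalent to an optimal upper bound $\operatorname{vol}(P^\ast)\le \tfrac{2u_{d,q}^2}{q^{d+1}}$ together with the classification of the equality case, and in fact it suffices to prove the purely combinatorial statement about lattice simplices announced in the abstract (a $d$-dimensional lattice simplex $S$ with $\frac1q S$ having exactly one interior lattice point has $\operatorname{vol}(S)$ bounded by the corresponding quantity), because a standard "pulling/refining to a simplex" argument shows the extremal dual polytope must be a simplex.

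First I would set up the reduction: given the Fano polytope $P$, pass to its polar dual $P^\ast$; show that among all $\frac1q$-lc Fano polytopes of dimension $d$, the dual normalized volume is maximized by a \emph{simplex}, by arguing that if $P^\ast$ is not a simplex one can degenerate/subdivide to increase the volume while preserving the single-interior-lattice-point property of $\frac1q P^\ast$ (this is where one needs to be careful that the lc condition is preserved). Then I would reduce to weighted projective spaces: a lattice simplex with vertices $v_0,\dots,v_d$ and $\sum \lambda_i v_i = 0$ with $\lambda_i\in\ZZ_{>0}$, $\gcd(\lambda_i)=1$ corresponds to $\bP(\lambda_0,\dots,\lambda_d)$ up to quotient, and its anti-canonical volume is $\frac{(\sum\lambda_i)^d}{\prod\lambda_i}$ divided by the order of the relevant finite group; maximizing this under the $\frac1q$-lc constraint becomes an explicit optimization over the weights.

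The core of the argument is then a numerical/combinatorial optimization. The $\frac1q$-lc condition on $\bP(\lambda_0,\dots,\lambda_d)$ forces inequalities of the shape $\frac{q\lambda_i}{\sum_j\lambda_j}\le 1$ for "most" $i$, with one exceptional index handled separately — precisely the kind of constraint under which the numbers $u_{k,q}$ are extremal by the Sylvester-sequence identities \eqref{sum1} and \eqref{product}. I would run an induction on $d$: fixing the largest weight and applying the $(d-1)$-dimensional bound to the "link", one shows $\sum\lambda_i$ and $\prod\lambda_i$ are controlled so that $\frac{(\sum\lambda_i)^d}{\prod\lambda_i}\le \frac{2u_{d,q}^2}{q^{d+1}}$, with equality analysis pinning the weights down to $\bigl(\tfrac{2u_{d,q}}{1+u_{1,q}},\dots,\tfrac{2u_{d,q}}{1+u_{d-1,q}},1,1\bigr)$ via \eqref{product}. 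The base case $d=3$ (or a suitable low-dimensional anchor) would be checked directly.

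The main obstacle I expect is twofold: first, rigorously establishing that the extremal dual polytope is a simplex — unlike the smooth or Gorenstein cases, subdivision arguments must be controlled so as not to worsen the singularities beyond $\frac1q$-lc, and one may instead need a direct combinatorial argument bounding $\operatorname{vol}(P^\ast)$ by the simplex bound without a literal reduction; second, the equality case in the induction, which requires showing that any deviation from the weights $\tfrac{2u_{d,q}}{1+u_{i,q}}$ strictly decreases the volume — this is a delicate convexity/rearrangement argument exploiting that the function $(\lambda_i)\mapsto \frac{(\sum\lambda_i)^d}{\prod\lambda_i}$ is Schur-convex-like while the constraint region has the $u_{k,q}$ as its extreme point. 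The condition $q\ge 2$ in the equality statement presumably reflects that for $q=1$ (the Gorenstein/Sylvester case) there are extra extremal polytopes, so the equality argument must isolate exactly where $q\ge 2$ is used.
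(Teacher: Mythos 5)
Your translation to polytopes already misplaces the lc condition: for a toric Fano $X$ the relevant polytope is $P_X=\conv(e_i,\ e_i\in\Delta(1))$, the $\frac1q$-lc condition is $\inte(\frac1q P_X)\cap\ZZ^d=\{\0\}$ on $P_X$ itself, not on its dual (indeed $\inte(P^*)\cap\ZZ^d=\{\0\}$ holds automatically for every Fano polytope $P$), and $\Vol(-K_X)=\Vol(P_X^*)$ is the \emph{dual} normalized volume of the $\frac1q$-lc polytope. More seriously, the two load-bearing steps of your plan are missing. First, the reduction to simplices: deleting vertices of $P_X$ does increase the dual volume, but it can push $\0$ out of the interior, so one can only pass to a \emph{minimal} $\frac1q$-lc Fano polytope, which need not be a simplex; you offer no mechanism ("pulling/refining") that both increases $\Vol(P^*)$ and lands on a simplex while preserving the lc condition. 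The paper never makes such a reduction: it decomposes a minimal non-simplex polytope into lower-dimensional $\frac1q$-lc Fano simplices (Theorem~\ref{decomp}, after Kasprzyk and Balletti--Kasprzyk--Nill), bounds $\Vol(P^*)$ by $\frac{(d_1+\cdots+d_t)!}{d_1!\cdots d_t!}\prod_i\Vol(S_i^*)$, and proves \emph{strict} inequality by explicit estimates (Propositions~\ref{1st bd} and~\ref{2nd bd}), the borderline case $t=2$, $d_1=d_2=d-1$ requiring the integration bound of Lemma~\ref{integration} together with the lower bound on $\prod_{i=1}^{d+1}\beta_i$ of Theorem~\ref{lower bd for bc d+1}.

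Second, in the simplex case your constraint set is far too weak: inequalities of the shape $q\lambda_i/\sum_j\lambda_j\le1$ do not encode $\inte(\frac1q S)\cap\ZZ^d=\{\0\}$ (which is a lattice condition, not a condition on barycentric coordinates alone), and optimizing $(\sum\lambda_i)^d/\prod\lambda_i$ under them cannot produce the bound $\frac{2u_{d,q}^2}{q^{d+1}}$. The essential ingredient of the paper is the family of generalized Product--Sum inequalities $\prod_{i=1}^{t}\beta_i\le q^t\sum_{j=t+1}^{d+1}\beta_j$ for the barycentric coordinates of $\0$ (Theorem~\ref{PS ineq}), proved via Averkov's theorem on integral points for matrices with $0<|\det|<1$, combined with Pikhurko's bound $\vol_\ZZ(S^*)\le\frac{1}{d!\beta_1\cdots\beta_d}$ and the optimization of Section~3; your proposal contains no substitute for these. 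Note also that a $\frac1q$-lc Fano simplex is in general only a quotient of a weighted projective space, and the equality case is not settled by a weight optimization: after pinning down the barycentric coordinates one still needs the lattice-theoretic argument of Lemmas~\ref{unique1} and~\ref{unique2} (forcing $h=1$) and Proposition~\ref{ex2} to conclude $X\simeq\bP(\frac{2u_{d,q}}{1+u_{1,q}},\cdots,\frac{2u_{d,q}}{1+u_{d-1,q}},1,1)$. So while you identify the correct extremizer and the role of the Sylvester-type identities, the proposal as it stands lacks the key inequalities for the simplex bound, a workable argument for non-simplex polytopes, and the lattice classification needed for the equality statement.
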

Note that the case when $q=1$ is already solved in \cite{AKN16} and \cite{BKN22} (see Theorem~\ref{bounds for d3}). 

\subsection{Upper bounds for dual normalized volumes of Fano polytopes}

To get the upper bound in Theorem~\ref{main thm}, it suffices to bound the normalized volume of the corresponding moment polytope, which is the dual of a $d$-dimensional $\frac{1}{q}$-lc Fano polytope. So we reduce Theorem~\ref{main thm} to Theorem~\ref{bounds for d3} and the following one.

\begin{thm}\label{dual vol bd for lattice polytope}
Fix two integers $d\geq3$ and $q\geq2$. Let $P$ be a $d$-dimensional $\frac{1}{q}$-lc Fano polytope. Then
    $$\Vol(P^*)\leq\frac{2u_{d, q}^2}{q^{d+1}},$$
and the equality holds if and only if $$P^*\simeq\conv(\frac{1+u_{1, q}}{q}e_{1}, \cdots, \frac{1+u_{d-1, q}}{q}e_{d-1}, \mp\frac{u_{d, q}}{q}e_{d}),$$ where $e_{i}$ are the standard basis of $\bR^d$.
\end{thm}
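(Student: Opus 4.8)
The plan is to follow the strategy of \cite{AKN16} and \cite{BKN22}: reduce the bound to the case of Fano \emph{simplices}, and then solve the resulting extremal problem for integer weight systems by exploiting the arithmetic of the sequence $u_{k,q}$ encoded in \eqref{sum1} and \eqref{product}. For the reduction, recall that $P$ is $\frac1q$-lc if and only if $\frac1q P$ has the origin as its unique interior lattice point; this property is inherited by any $d$-dimensional lattice subpolytope $S\subseteq P$ with $\0\in\inte(S)$, since then $\inte(\frac1q S)\subseteq\inte(\frac1q P)$. As $P$ contains such a lattice simplex (a standard convexity argument) and replacing each of its vertices by the primitive lattice point on the ray through it only shrinks it, we obtain a $\frac1q$-lc Fano simplex $S\subseteq P$; then $P^*\subseteq S^*$, so $\Vol(P^*)\le\Vol(S^*)$. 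Hence it suffices to prove the inequality for Fano simplices, and if equality holds for $P$ then $\Vol(S^*)=\Vol(P^*)$ together with $P^*\subseteq S^*$ forces $P^*=S^*$, so $P=S$ is itself extremal and the characterization transfers.

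In the simplex case, let $v_0,\dots,v_d$ be the vertices of a $\frac1q$-lc Fano simplex $S$ and $\sum_{i=0}^d a_iv_i=\0$, $a_i\in\ZZ_{>0}$, $\gcd(a_0,\dots,a_d)=1$, the primitive relation among them. Then $X_S$ is a fake weighted projective space $\bP(a_0,\dots,a_d)/G$ with $\Vol(S^*)=\frac{(\sum_i a_i)^d}{m\prod_i a_i}$, where $m=|G|$ is the index of $\langle v_0,\dots,v_d\rangle$ in $N$; replacing $N$ by this sublattice keeps $S$ a $\frac1q$-lc Fano simplex (there are fewer lattice points to test) while increasing $\Vol(S^*)$ to $\frac{(\sum_i a_i)^d}{\prod_i a_i}$, so we may assume $X_S\simeq\bP(a_0,\dots,a_d)$. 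Writing $A=\sum_i a_i$, the condition that $\frac1q S$ has only $\0$ as interior lattice point becomes, via the Reid--Tai formula for the minimal log discrepancies at the torus-fixed points of $\bP(a_0,\dots,a_d)$, the system $\sum_{i\ne j}\{\,l a_i/a_j\,\}\ge\frac1q$ for every $j$ with $a_j>1$ and every $1\le l\le a_j-1$. The theorem thereby reduces to: maximize $\frac{A^d}{\prod_i a_i}$ over integer weight systems satisfying these inequalities, and show the maximum equals $\frac{2u_{d,q}^2}{q^{d+1}}$, attained only at the weight system $\bigl(1,1,\tfrac{2u_{d,q}}{1+u_{d-1,q}},\dots,\tfrac{2u_{d,q}}{1+u_{1,q}}\bigr)$.

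This optimization I would attack by a combinatorial argument in the spirit of Sylvester's greedy algorithm for unit fractions: after ordering the weights, the constraints bound each weight in terms of the sum of the others, and pushing $\frac{A^d}{\prod_i a_i}$ up to its maximum forces the bounds to be saturated in a pattern governed exactly by the recursion $u_{k+1,q}=u_{k,q}(u_{k,q}+1)$ --- here \eqref{sum1} records that the candidate extremal weights do satisfy $\sum_i a_i/A=1$, while \eqref{product} is precisely what collapses $\frac{A^d}{\prod_i a_i}$ to $\frac{2u_{d,q}^2}{q^{d+1}}$. Presumably this is organized as an induction on $d$ with base case $d=3$ (invoking Theorem~\ref{bounds for d3} for $q=1$, and a direct analysis for $q\ge2$), or as a single chain of estimates. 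For the equality statement one tracks which of the inequalities are tight all the way through: equality forces $m=1$, forces exactly two of the weights to equal $1$, and forces every step of the greedy estimate to be sharp, which pins down the weight system; a direct computation of $S^*$ from $X_S\simeq\bP\bigl(\tfrac{2u_{d,q}}{1+u_{1,q}},\dots,\tfrac{2u_{d,q}}{1+u_{d-1,q}},1,1\bigr)$ then yields the displayed simplex.

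I expect the main obstacle to be exactly this extremal problem and, above all, its equality case: one must extract the \emph{sharp} arithmetic constraints from the ``unique interior lattice point'' condition (naive bounds do not reach the $u_{k,q}$), run the greedy estimate so that it matches the recursion exactly, and then eliminate every near-extremal configuration --- fake weighted projective spaces of index $m>1$, and weight systems with coinciding weights or with more than two weights equal to $1$ --- in order to isolate the unique maximizer.
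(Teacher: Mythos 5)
Your reduction of the general case to a full-dimensional subsimplex does not work, and this is exactly where the real difficulty of the theorem lies. You claim that $P$ contains a $d$-dimensional lattice simplex $S\subseteq P$ with $\0\in\inte(S)$ (whose vertices you then make primitive), but such a simplex need not exist: for the octahedron $P=\conv(\pm e_1,\pm e_2,\pm e_3)$, which is a $\frac1q$-lc Fano polytope for every $q\geq 1$, the only lattice points of $P$ are $\0$ and the six vertices, and every lattice tetrahedron with vertices among them contains $\0$ on an edge, never in its interior; the analogous cross-polytopes give counterexamples in every dimension $d\geq 3$. This is precisely why the paper does not reduce to a single Fano subsimplex: instead it passes to a \emph{minimal} $\frac1q$-lc Fano subpolytope $P'\subseteq P$ (so $\Vol(P^*)\leq\Vol(P'^*)$), decomposes $P'$ into \emph{lower-dimensional} $\frac1q$-lc Fano simplices via Theorem~\ref{decomp}, bounds $\Vol(P'^*)$ by the combinatorial inequality \eqref{ineq for dual vol} together with the simplex bound of Theorem~\ref{bd for the dual vol of s}, and handles the delicate case $t=2$, $d_1=d_2=d-1$ separately with the integration estimate of Lemma~\ref{integration}; Propositions~\ref{1st bd} and \ref{2nd bd} then give \emph{strict} inequality for non-simplices, which is also what makes the equality characterization work. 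Your argument, which would force equality only through $P^*=S^*$ for a full-dimensional subsimplex, has no substitute for this analysis.

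For the simplex case your route (fake weighted projective spaces, weights $a_i$, maximizing $A^d/(m\prod_i a_i)$ subject to Reid--Tai-type constraints) is genuinely different from the paper, which works with the barycentric coordinates of $\0$, derives the Product--Sum inequalities (Theorem~\ref{PS ineq}) from Averkov's theorem, solves the resulting constrained minimization of $\prod_{i=1}^{d}\beta_i$ (Lemmas~\ref{d+1prod}, \ref{dprod}, Theorem~\ref{lower bd for bc d}), and bounds $\vol_{\ZZ}(S^*)$ by Pikhurko's inequality (Corollary~\ref{dual simplex vol bd}); uniqueness is then extracted via Lemmas~\ref{unique1}--\ref{unique2} and Proposition~\ref{ex2}. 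But as you yourself concede, the core of your plan --- the sharp extremal problem for the weights and, above all, its equality case (ruling out index $m>1$, pinning down exactly two weights equal to $1$, making the greedy estimate match the recursion $u_{k+1,q}=u_{k,q}(u_{k,q}+1)$ exactly) --- is only sketched, so even in the simplex case the proposal is an outline rather than a proof. As it stands the proposal has two genuine gaps: the false reduction for non-simplices, and the unproven extremal/uniqueness analysis for simplices.
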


\subsection{Upper bounds for volumes of lattice simplices}

At the same time, we can get the following optimal upper bound for the volumes of lattice simplices $S\subset\bR^d$ such that $\inte(\frac{1}{q}S)\cap \ZZ^d=\{\0\}$.
\begin{thm}\label{vol bd for simplex}
Fix two integers $d\geq3$ and $q>0$. Let $S$ be a $d$-dimensional latttice simplex in $\bR^d$ such that $\inte(\frac{1}{q}S)\cap \ZZ^d=\{\0\}$. Then
    $$\vol_{\ZZ}(S)\leq\frac{2u_{d, q}^2}{d!q},$$
and the equality holds if $S\simeq\conv((1+u_{1, q})e_1-qe, \cdots, (1+u_{d-1, q})e_{d-1}-qe, -u_{d, q}e_d-qe, u_{d, q}e_d-qe)$, where $e_{i}$ are the standard basis of $\bR^d$ and $e=\Sigma_{i=1}^{d-1}e_i$.

\end{thm}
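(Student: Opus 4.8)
The plan is to pass to the rational simplex $T:=\tfrac1q S$ and argue via the barycentric coordinates of its unique interior lattice point. By hypothesis $\inte T\cap\ZZ^d=\{\0\}$, so $\0\in\inte T$ and hence $\0\in\inte S$. Write $S=\conv(w_0,\dots,w_d)$ with $w_i\in\ZZ^d$ and let $\0=\sum_{i=0}^d\lambda_iw_i$ be the unique convex representation ($\lambda_i>0$, $\sum_i\lambda_i=1$). Setting $N:=d!\,\vol_{\ZZ}(S)$, the barycentric‑coordinate formula gives $\lambda_iN=d!\,\vol_{\ZZ}\!\big(\conv(\{\0\}\cup\{w_j:j\neq i\})\big)=:p_i\in\ZZ_{>0}$ and $\sum_ip_i=N$; reorder so that $\lambda_0\le\lambda_1\le\cdots\le\lambda_d$. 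The hypothesis says precisely that no $x\in\ZZ^d\setminus\{\0\}$ has all of its barycentric coordinates with respect to $T$ strictly positive, equivalently $\inte S\cap q\ZZ^d=\{\0\}$. The goal is then the equivalent numerical bound $N\le 2u_{d,q}^2/q$, after which the extremal example is checked by hand.

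First I would bound the large barycentric coordinates by a forced–lattice–point argument. If $\lambda_d>\tfrac q{q+1}$ then $-w_d=\sum_{i<d}\tfrac{q\lambda_i}{\lambda_d}w_i$ has positive coefficients summing to $\tfrac{q(1-\lambda_d)}{\lambda_d}<1$, so $-w_d\in\inte S\cap q\ZZ^d\setminus\{\0\}$, a contradiction; hence $\lambda_d\le\tfrac{q}{q+1}=\tfrac{q}{1+u_{1,q}}$. Iterating, using for each $k$ a lattice point of the form $-w_{d-k+1}-\sum_{j=d-k+2}^{d}m_j^{(k)}w_j$ with nonnegative integers $m_j^{(k)}$ dictated by the recursion $u_{k+1,q}=u_{k,q}(u_{k,q}+1)$, one should obtain, in the sorted order,
\[
\lambda_{d-k+1}\le\frac{q}{1+u_{k,q}}\qquad(k=1,\dots,d-1).
\]
Summing these $d-1$ inequalities and invoking \eqref{sum1} yields $\lambda_0+\lambda_1\ge\tfrac{q}{u_{d,q}}$, hence $N=\dfrac{p_0+p_1}{\lambda_0+\lambda_1}\le\dfrac{(p_0+p_1)\,u_{d,q}}{q}$, so it remains to show $p_0+p_1\le 2u_{d,q}$.

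For that I would reduce the dimension. Let $H:=\mathrm{span}(w_2,\dots,w_d)$ (a hyperplane through $\0$), $\Lambda:=H\cap\ZZ^d$, and $\phi$ the primitive integral form vanishing on $H$. Since $\lambda_0w_0+\lambda_1w_1=-\sum_{i\ge2}\lambda_iw_i\in H$ with $\lambda_0,\lambda_1>0$, the vertices $w_0,w_1$ lie on opposite sides of $H$ and $\lambda_0\phi(w_0)=-\lambda_1\phi(w_1)$, so the lattice heights $h_j:=|\phi(w_j)|$ satisfy $h_0/h_1=\lambda_1/\lambda_0$. Since $p_0$ (resp. $p_1$) is the normalized volume of the pyramid over $\conv(\0,w_2,\dots,w_d)\subset H$ with apex $w_1$ (resp. $w_0$), one gets $p_0+p_1=V_B\,(h_0+h_1)$, where $V_B$ is the normalized $(d-1)$‑volume of $\conv(\0,w_2,\dots,w_d)$ computed in $H$ with the lattice $\Lambda$. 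Now the central slice $S\cap H$ is a $(d-1)$‑dimensional simplex with $\0$ in its relative interior, it contains $\conv(\0,w_2,\dots,w_d)$, and — being the slice of $S$ by a hyperplane through the interior point $\0$ — its relative interior meets $\tfrac1q\Lambda$ only in $\0$. Thus a suitably phrased inductive hypothesis (formulated for a class of rational simplices large enough to include $S\cap H$, since that slice need not have lattice vertices) bounds $V_B$; fed back in, together with $h_0/h_1=\lambda_1/\lambda_0$ and the product identity \eqref{product}, this should give $p_0+p_1\le 2u_{d,q}$ and hence $\vol_{\ZZ}(S)\le\tfrac{2u_{d,q}^2}{d!\,q}$. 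I expect this to be the main obstacle: one must organize the induction (with the low‑dimensional base case handled directly) so that the slice contribution and the height contribution multiply out to \emph{exactly} $2u_{d,q}$ and not something larger, which is where the recursion for $u_{k,q}$ and the identities \eqref{sum1}–\eqref{product} are indispensable; the bookkeeping should parallel the proof of Theorem~\ref{dual vol bd for lattice polytope}. (A reduction of the inequality directly to Theorem~\ref{dual vol bd for lattice polytope} via the polar dual $(\tfrac1q S)^{*}$ is tempting, but that polytope need not be the polar of a Fano polytope, so a self‑contained lattice argument — which also covers $q=1$ — seems necessary.)

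Finally, for the equality clause I would verify the displayed simplex directly. Writing its $d+1$ vertices as in the statement and letting $\mu_0,\dots,\mu_d\ge0$, $\sum_i\mu_i=1$, be the barycentric coordinates of a lattice point $x\in\tfrac1q S$, the $e_j$‑coordinate of $x$ for $1\le j\le d-1$ equals $\mu_j\tfrac{1+u_{j,q}}{q}-1$, forcing $\mu_j\tfrac{1+u_{j,q}}{q}\in\ZZ_{\ge1}$ whenever $\mu_j>0$; using $\sum_i\mu_i=1$, the identity $\sum_{j<d}\tfrac{q}{1+u_{j,q}}=1-\tfrac{q}{u_{d,q}}$ from \eqref{sum1}, and $1+u_{j,q}\le u_{d,q}$, one finds that any interior lattice point must have $e_j$‑coordinate $0$ for all $j<d$ and then $e_d$‑coordinate $0$ as well, so $\inte(\tfrac1q S)\cap\ZZ^d=\{\0\}$. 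A determinant computation gives $d!\,\vol_{\ZZ}(S)=2u_{d,q}\prod_{j=1}^{d-1}(1+u_{j,q})$, which equals $2u_{d,q}^2/q$ by \eqref{product}; hence $\vol_{\ZZ}(S)=\tfrac{2u_{d,q}^2}{d!\,q}$, so the bound is attained.
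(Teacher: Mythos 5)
Your reduction of the bound to the two claims ``$\lambda_0+\lambda_1\ge\frac{q}{u_{d,q}}$'' and ``$p_0+p_1\le 2u_{d,q}$'' contains a genuine gap at both points. First, the ``iterated'' pointwise bounds $\lambda_{d-k+1}\le\frac{q}{1+u_{k,q}}$ for $k=1,\dots,d-1$ are simply false in general: take $S=q\,\conv(e_1,\dots,e_d,-e_1-\cdots-e_d)$, so that $\frac1qS$ is the standard reflexive simplex with $\inte(\frac1qS)\cap\ZZ^d=\{\0\}$ and all barycentric coordinates of $\0$ equal to $\frac1{d+1}$; already for $k=3$ and $d=4$ one has $\frac1{d+1}=\frac15>\frac{q}{1+u_{3,q}}\le\frac16$ for every $q\ge1$. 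Only the $k=1$ step is a correct forced-lattice-point argument (and even there the witness must be $-qw_d=\sum_{i<d}(q+1)\lambda_i w_i+((q+1)\lambda_d-q)w_d$, not $-w_d$, since the hypothesis excludes points of $q\ZZ^d$, not arbitrary lattice points, from $\inte S$). What the hypothesis actually yields is the family of Product--Sum inequalities $\prod_{i=1}^{t}\beta_i\le q^{t}\sum_{j>t}\beta_j$ (Theorem~\ref{PS ineq} together with Remark~\ref{rem2}, proved via Theorem~\ref{integral solution}), not coordinatewise bounds; so your inequality $\lambda_0+\lambda_1\ge\frac{q}{u_{d,q}}$, obtained by summing the false pointwise bounds, is unjustified (it may well be true, but it would itself require an optimization argument over the constraint set). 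Second, even granting that inequality, the crucial estimate $p_0+p_1\le2u_{d,q}$ is exactly where the content of the theorem lies, and you leave it as a sketch: the slice $S\cap H$ is a rational simplex to which the statement being proved does not apply, you do not formulate the stronger inductive hypothesis for such rational slices, and you do not verify that the base-times-height bookkeeping produces exactly $2u_{d,q}$ --- you flag this yourself as the main obstacle. As it stands the argument is incomplete at its central step.

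For contrast, the paper's proof bypasses the slicing induction entirely: it applies Pikhurko's bound (Theorem~\ref{simplex vol bd}) to $\frac1qS$ at its unique interior lattice point $\0$, giving $\vol_{\ZZ}(S)\le\frac{q^{d}}{d!\,\beta_1\cdots\beta_d}$, observes via Remark~\ref{rem2} that $(\beta_1,\dots,\beta_{d+1})\in X(d,q)$, and then invokes the optimization result Theorem~\ref{lower bd for bc d}, namely $\beta_1\cdots\beta_d\ge\frac{q^{d+1}}{2u_{d,q}^2}$, to conclude. If you want to salvage your route, you would need to prove your two claims by exactly this kind of constrained-minimization analysis, at which point you have reproduced the paper's machinery. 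Your verification that the displayed simplex attains the bound (interior-point analysis via \eqref{sum1} and the determinant computation via \eqref{product}) is correct and essentially identical to the paper's.
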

Note that the case when $q=1$ is already solved (see \cite[Theorem~2.2]{AKN16}).

\subsection{Organization of this paper}
In Section~2, we collect the necessary material from geometry of numbers and toric geometry, introducing the relevant polytopes, the way to bound the volume of a simplex (see Theorem~\ref{simplex vol bd}) and the decomposition strategy to deal with the dual volume of certain polytopes (see Theorem~\ref{decomp}); In Section~3, we introduce generalized Product-Sum inequalities (see Thereom~\ref{PS ineq}) for barycentric coordinates of the origin $\0$ with respect to an $\frac{1}{q}$-lc Fano simplex and use it to characterize when the products of barycentric coordinates attain the minimal ones (see Theorems~\ref{lower bd for bc d+1} and \ref{lower bd for bc d}); In Section~4, based on the results in Section~3, we apply Corollary~\ref{dual simplex vol bd} to get an upper bound for the dual normalized volume of any $d$-dimensional $\frac{1}{q}$-lc Fano simplex when $d\geq2$, and hence an upper bound for the anti-canonical volumes of $d$-dimensional $\frac{1}{q}$-lc toric Fano varieties of Picard number one (see Theorem~\ref{bd for the dual vol of s} and Theorem~\ref{bd volume p1}). We construct examples to show that these upper bounds are optimal when $d\geq3$ (see Example~\ref{ex1}), and we also show that under some conditions these examples are the unique ones which attain the optimal upper bounds; In Section~5, we use the decomposition strategy and integration method from \cite{BKN22} to find upper bounds for the dual normalized volumes of $\frac{1}{q}$-lc Fano polytopes which are not simplices; In Section~6, we prove Theorem~\ref{main thm}, Theorem~\ref{dual vol bd for lattice polytope} and Theorem~\ref{vol bd for simplex}.
\section{Preliminaries}

\subsection{Geometry of numbers}\label{GN}
We refer to \cite{GL87} for the basic knowledge in geometry of numbers. Fix a positive integer $d$. Let $\bR^d$ be the $d$-dimensional vector space and $\ZZ^d\subset\bR^d$ the lattice of rank $d$. A {\it unimodular transformation} is an affine bijection map $\phi: \bR^d\longrightarrow\bR^d $ such that $\phi(\ZZ^d)=\ZZ^d$. A {\it polytope} $P$ is a convex bounded subset of $\bR^d$ such that $P$ is the intersection of finitely many affine half-spaces. Two polytopes $P, Q$ are said to be {\it unimodularly equivalent}, denoted by $P\simeq Q$, if there exists a unimodular transformation $\phi$ such that $\phi(P)=Q$. For a polytope $P$, we use $\inte(P)$ and $\vex(P)$ to denote the interior of $P$ and the set of vertices of $P$ respectively. The {\it dual} of a polytope $P$ is defined as 
$$P^*:=\{m\in\bR^d|<m, v>+1\geq0, \forall v\in P\}.$$
\begin{definition}
    A polytope $P\subset\bR^d$ is called a {\it lattice polytope} if $\vex(P)\subset\ZZ^d$. We say $P\subset\bR^d$ is a {\it Fano polytope}  if it is a lattice polytope with $\0\in\inte(P)$ and each vertex $v\in\vex(P)$ is a primitive lattice point. A Fano polytope $P$ is called an {\it $\epsilon$-lc Fano polytope} for some $\epsilon>0$ if $\inte(\epsilon P)\cap\ZZ^d=\{\0\}$. Here $\0$ denotes the origin of $\bR^d$.
\end{definition}

\begin{definition}
    A {\it simplex} $S\subset\bR^d$ is a $d$-dimensional polytope with $d+1$ vertices. Let $S\subset\bR^d$ be a simplex with $\vex(S)=\{v_1, \cdots, v_{d+1}\}$. Given a point $x\in\bR^d$, the {\it barycentric coordinates} of $x$ with respect to $S$ are the uniquely determined numbers $\beta_1, \cdots, \beta_{d+1}$ such that 
    $$\Sigma_{i=1}^{d+1}\beta_i=1\quad \text{and}\quad \Sigma_{i=1}^{d+1}\beta_i v_i=x.$$ 
    Note that $x\in\inte(S)$ if and only if $\beta_i>0$ for all $i\in\{1, \cdots, d+1\}$.
\end{definition}

Given a $d$-dimensional polytope $P\subset\bR^d$, the {\it normalized volume} of $P$ is $\Vol(P)=d!\vol_\ZZ(P)$, where $\vol_\ZZ(P)$ is the (Euclidean) volume of $P$. For convenience, we call $\Vol(P^*)$ (resp. $\vol_\ZZ(P^*)$) the {\it dual normalized volume} (resp. {\it dual volume}) of $P$. 

In each dimension $d$, \cite[Lemma~5]{Pik01} gives an upper bound for the volumes of $d$-dimensional simplices containing interior lattice points in terms of the number of interior lattice points and the barycentric coordinates of an interior lattice point.

\begin{thm}\label{simplex vol bd}
Let $S$ be a $d$-dimensional simplex in $\bR^d$. Let $n=|\inte(S)\cap\ZZ^d|$ be the number of interior lattice points in $S$. Suppose $n\geq1$ and $x\in \inte(S)\cap\ZZ^d$ has barycentric coordinates $\beta_1\geq\cdots\geq\beta_{d+1}$. Then
    $$\vol_\ZZ(S)\leq\frac{n}{d!\beta_1\cdots\beta_d}.$$
\end{thm}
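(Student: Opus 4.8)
The plan is to follow the strategy of \cite[Lemma~5]{Pik01}, comparing the given simplex $S$ with a reference simplex built from an interior lattice point and a suitable lattice sublattice. Let $x\in\inte(S)\cap\ZZ^d$ be the interior lattice point with barycentric coordinates $\beta_1\geq\cdots\geq\beta_{d+1}>0$ with respect to $\vex(S)=\{v_1,\dots,v_{d+1}\}$, and set $w_i:=v_i-x$. After translating by $-x$ we may assume $x=\0$, so that $\Sigma_{i=1}^{d+1}\beta_i w_i=\0$ and $\Sigma_{i=1}^{d+1}\beta_i=1$. The idea is that the volume of $S$ is controlled by the volumes of the $d+1$ ``cones'' $C_j:=\conv(\{\0\}\cup\{w_i : i\neq j\})$, since $S$ is the union of these (each $C_j$ is the sub-simplex with apex $\0$ opposite the facet not containing $v_j$). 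A direct computation with barycentric coordinates gives $\vol_\ZZ(C_j)=\beta_j\,\vol_\ZZ(S)$, hence in particular $\vol_\ZZ(C_{d+1})=\beta_{d+1}\vol_\ZZ(S)$, and more usefully the first $d$ of the $w_i$ span $C_{d+1}$ with $\vol_\ZZ(\conv(\0,w_1,\dots,w_d))=\beta_{d+1}\vol_\ZZ(S)$.

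Next I would introduce the lattice points of $S$ in a useful way. For each interior lattice point $y_1=\0,y_2,\dots,y_n$ of $S$, consider how translating the spanning vectors $w_1,\dots,w_d$ interacts with these points. The key step is a volume/counting inequality: one shows that the ``fundamental'' lattice parallelepiped or simplex attached to $w_1,\dots,w_d$ must, by minimality of $\beta_{d+1}$ among the barycentric coordinates (so $\beta_{d+1}\le\frac{1}{d+1}$ and more importantly $\beta_j\le\beta_{d+1}^{-1}\cdot(\text{stuff})$ is \emph{not} quite what we want — rather we use $\beta_1\cdots\beta_d\ge\beta_1\cdots\beta_d$ directly), contain enough room to force the bound. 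Concretely, one estimates the Euclidean volume of $S$ from above by relating it to $\det(w_1,\dots,w_d)=d!\,\vol_\ZZ(\conv(\0,w_1,\dots,w_d))=d!\,\beta_{d+1}\vol_\ZZ(S)$ and then bounding $\det(w_1,\dots,w_d)$ below by counting lattice points: the parallelepiped $\Pi$ spanned by $w_1,\dots,w_d$ has normalized volume at least $1$, but when divided appropriately among the $n$ interior lattice points one gets $\vol_\ZZ(\Pi)\le n\cdot(\text{the multiplicity with which each interior point is covered})$. Working this out as in Pikhurko gives $1/(d!\,\beta_1\cdots\beta_d)\ge \vol_\ZZ(S)/n$ after rearranging the product of the $\beta_i$.

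The main obstacle, and the heart of the argument, is the combinatorial counting step that produces the factor $\beta_1\cdots\beta_d$ in the denominator rather than just a single $\beta_{d+1}$: one must show that the $n$ interior lattice points, together with the ordering $\beta_1\ge\cdots\ge\beta_{d+1}$, force the spanning determinant to be large enough. This is exactly \cite[Lemma~5]{Pik01}, so since the theorem is quoted with attribution I would simply invoke that reference for the delicate step, after reducing to it via the barycentric identities above; reproving it in full would require the lattice-point covering argument of that paper, which is not needed here.
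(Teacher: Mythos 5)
The paper does not actually prove this statement: it is quoted from Pikhurko \cite[Lemma~5]{Pik01}, so your fallback of invoking that reference for the decisive counting step is exactly the paper's own treatment, and in that sense your proposal matches it. Note, however, that the surrounding sketch would not by itself yield the bound — the identity $|\det(w_1,\dots,w_d)|=d!\,\beta_{d+1}\vol_\ZZ(S)$ for $w_i=v_i-x$ only brings in the smallest coordinate $\beta_{d+1}$, and the ``multiplicity'' count that is supposed to produce the factor $\beta_1\cdots\beta_d$ is left unspecified (indeed your parenthetical reduces to a tautology) — so all the substance rests on the citation, just as in the paper.
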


Note that for any $d$-dimensional lattice polytope $P\subset\bR^d$ with $\0\in\inte(P)$, we have $\inte(P^*)\cap\ZZ^d=\{\0\}$. In particular, for a Fano simplex $S$, the dual $S^*$ contains exactly one lattice point $\0$ in its interior. So we have the following corollary to bound the dual volume of a Fano simplex.

\begin{cor}\label{dual simplex vol bd}
Let $S$ be a $d$-dimensional Fano simplex in $\bR^d$. Suppose $\0\in \inte(S)$ has barycentric coordinates $\beta_1\geq\cdots\geq\beta_{d+1}$. Then
    $$\vol_\ZZ(S^*)\leq\frac{1}{d!\beta_1\cdots\beta_d}.$$
\end{cor}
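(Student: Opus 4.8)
The plan is to apply Theorem~\ref{simplex vol bd} directly to the dual polytope $S^{*}$, once we know that $S^{*}$ is again a $d$-dimensional simplex containing exactly one lattice point in its interior and that $\0$ has the same barycentric coordinates with respect to $S^{*}$ as with respect to $S$.

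First I would set up the combinatorics of the duality. Write $\vex(S)=\{v_{1},\dots,v_{d+1}\}$. Since $S$ is a Fano simplex, $\0\in\inte(S)$, so $S^{*}$ is a bounded $d$-dimensional polytope; its facets are dual to the vertices of $S$, the facet dual to $v_{i}$ being $\{m:\langle m,v_{i}\rangle+1=0\}$, and its vertices $w_{1},\dots,w_{d+1}$ are dual to the facets of $S$, with $w_{i}$ characterized by $\langle w_{i},v_{j}\rangle=-1$ for every $j\neq i$. In particular $S^{*}$ has exactly $d+1$ vertices, hence is a $d$-dimensional simplex, and by the remark preceding the statement its interior contains exactly the lattice point $\0$, so $n=|\inte(S^{*})\cap\ZZ^{d}|=1$.

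The key step is the identity of barycentric coordinates. Starting from $\0=\sum_{i}\beta_{i}v_{i}$ with $\sum_{i}\beta_{i}=1$ and pairing this identity with $w_{j}$, the relations $\langle v_{i},w_{j}\rangle=-1$ for $i\neq j$ give $\beta_{j}\langle v_{j},w_{j}\rangle=1-\beta_{j}$, so $\langle v_{j},w_{j}\rangle+1=1/\beta_{j}$ (here $\beta_{j}>0$ since $\0\in\inte(S)$). Conversely, writing $\0=\sum_{i}\gamma_{i}w_{i}$ with $\sum_{i}\gamma_{i}=1$ and pairing with $v_{j}$ yields $\gamma_{j}(\langle w_{j},v_{j}\rangle+1)=1$, whence $\gamma_{j}=\beta_{j}$. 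Thus the barycentric coordinates of $\0$ with respect to $S^{*}$ form the same multiset $\{\beta_{1},\dots,\beta_{d+1}\}$, and sorted in decreasing order they are again $\beta_{1}\geq\cdots\geq\beta_{d+1}$.

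Finally I would invoke Theorem~\ref{simplex vol bd} with the simplex $S^{*}$, the interior lattice point $x=\0$, and $n=1$, obtaining $\vol_{\ZZ}(S^{*})\leq\frac{1}{d!\beta_{1}\cdots\beta_{d}}$, as claimed. There is no serious obstacle here; the only point requiring care is the bookkeeping in the duality step — in particular, checking that the ordering of the $\beta_{i}$ transfers correctly, so that "$\beta_{1}\cdots\beta_{d}$" denotes the product of the $d$ largest barycentric coordinates on both sides.
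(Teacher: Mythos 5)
Your proposal is correct and follows exactly the paper's route: apply Theorem~\ref{simplex vol bd} to $S^{*}$ with $n=1$ (justified by the observation that $\inte(S^{*})\cap\ZZ^{d}=\{\0\}$) together with the fact that the barycentric coordinates of $\0$ with respect to $S^{*}$ and $S$ coincide. The only difference is that you spell out the duality computation $\gamma_{j}(\langle w_{j},v_{j}\rangle+1)=1$ and $\langle v_{j},w_{j}\rangle+1=1/\beta_{j}$, which the paper states without proof.
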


\begin{proof}
    This follows from Theorem~\ref{simplex vol bd} and the fact that the barycentric coordinates of $\0$ with respect to $S^*$ and $S$ are the same. 
\end{proof}

To find the upper bound for anti-canonical volumes of certain toric Fano varieties, we need to bound the dual normalized volumes of the corresponding Fano polytopes from above (see Subsection~\ref{toric} below). Our first step is to bound the dual normalized volumes of certain Fano simplices. As indicated by Corollary~\ref{dual simplex vol bd}, it suffices to bound the product of the first $d$ barycentric coordinates of the origin $\0$ from below. For this purpose, we need to find out some effective inequalities for these barycentric coordinates. 

We will use the following Theorem to derive some important inequalities for barycentric coordinates of the origin $\0$ with respect to certain Fano simplices (see Theorem~\ref{PS ineq} below).

\begin{thm}[{cf. \cite[Theorem~2.2]{Aver12}}]\label{integral solution}
Let $A$ be a real $d\times d$ matrix such that $0<|\det(A)|<1$. Then there exists an integer point $z\in\ZZ^d\setminus\{\0\}$ such that $||A\cdot z||_{\infty}<1$, where $||\cdot||_{\infty}$ denotes the $l_{\infty}$-norm associated to the vector space $\bR^d$.
\end{thm}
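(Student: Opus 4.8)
The plan is to deduce this from Minkowski's first (convex body) theorem applied to the lattice $\Lambda:=A\ZZ^d$. Since $\det(A)\neq 0$, the matrix $A$ is invertible, so $\Lambda$ is a full-rank lattice in $\bR^d$ with covolume $d(\Lambda)=|\det(A)|$, and the map $z\mapsto Az$ is a bijection from $\ZZ^d\setminus\{\0\}$ onto $\Lambda\setminus\{\0\}$. Under this identification, finding a nonzero $z\in\ZZ^d$ with $\|A\cdot z\|_\infty<1$ is exactly the same as finding a nonzero point of $\Lambda$ lying in the open cube $(-1,1)^d$.

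To produce such a point, first choose a real number $\lambda$ with $|\det(A)|^{1/d}<\lambda<1$; this is possible precisely because $0<|\det(A)|<1$ forces $|\det(A)|^{1/d}<1$. The cube $K:=[-\lambda,\lambda]^d$ is compact, convex, and centrally symmetric about $\0$, and its Euclidean volume $\vol(K)$ equals $(2\lambda)^d=2^d\lambda^d>2^d|\det(A)|=2^d\,d(\Lambda)$. Hence by Minkowski's convex body theorem (see \cite{GL87}) the set $K$ contains a lattice point $w\in\Lambda\setminus\{\0\}$; writing $w=A\cdot z$ with $z\in\ZZ^d\setminus\{\0\}$ yields $\|A\cdot z\|_\infty=\|w\|_\infty\leq\lambda<1$, as required.

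The only step needing a little care is the passage from the non-strict bound that Minkowski's theorem supplies for the closed cube $[-1,1]^d$ to the strict bound in the statement; this is handled exactly by the slack $|\det(A)|<1$, which leaves room to shrink the cube to $[-\lambda,\lambda]^d$ with $\lambda<1$ while still keeping $\vol(K)>2^d\,d(\Lambda)$. I do not anticipate any further obstacle, since the argument is essentially a one-line application of the geometry of numbers; alternatively one may simply quote \cite[Theorem~2.2]{Aver12} verbatim, but the self-contained Minkowski argument above is just as short.
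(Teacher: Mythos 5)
Your proof is correct: since $\lambda^d>|\det(A)|$, Minkowski's first theorem applies to the centrally symmetric convex cube $[-\lambda,\lambda]^d$ and the full-rank lattice $A\ZZ^d$ of covolume $|\det(A)|$, and the slack $\lambda<1$ converts the resulting bound into the required strict inequality $||A\cdot z||_{\infty}<1$. The paper gives no argument of its own here (the statement is simply quoted from Averkov), and your self-contained Minkowski argument is exactly the standard proof of that cited result, so it matches it in substance.
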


To bound the dual normalized volumes of Fano polytopes which are not simplices, we will use the strategy in \cite{BKN22}. Therefore, we need to introduce the concept of minimal $\epsilon$-lc Fano polytopes.
\begin{definition}
Fix a positive real number $\epsilon>0$. We say a $d$-dimensional $\epsilon$-lc Fano polytope $P\subset\bR^d$ is {\it minimal} if for each $v\in\vex(P)$, the convex hull $\conv(P\cap\n\setminus\{v\})$ is not a $d$-dimensional $\epsilon$-lc Fano polytope in $\bR^d$.
\end{definition}

Note that by definition, every $\epsilon$-lc Fano simplex is minimal.

Given a $d$-dimensional $\epsilon$-lc Fano polytope $P\subset\bR^d$, we can find a $d$-dimensional minimal $\epsilon$-lc Fano polytope $P'\subset P$ by possibly removing several vertices of $P$. Note that $P'\subset P$ implies $\vol_{\ZZ}(P'^*)\geq\vol_{\ZZ}(P^*)$. So to find an upper bound for the dual volume of any $\epsilon$-lc Fano polytope, it is enough to bound the dual volumes for minimal $\epsilon$-lc Fano polytopes from above, in which case we will use the decomposition of a minimal $\epsilon$-lc Fano polytope into lower dimensional $\epsilon$-lc Fano simplices:
\begin{thm}[{cf. \cite[Proposition~3.2]{Kas10} and \cite[Corollary~2.3]{BKN22}}]\label{decomp}
    Let $P\subset\bR^d$ be a $d$-dimensional minimal $\epsilon$-lc Fano polytope. If $P$ is not a simplex, then there exist lower dimensional $\epsilon$-lc Fano simplices $S_1, \cdots, S_t (2\leq t\leq d)$ such that $P=\conv(S_1\cup\cdots\cup S_t)$, where $\vex(S_i)\subset\vex(P) (1\leq i\leq t)$. For $1\leq i\leq t$, denote $d_i:=\dim S_i$ and $r_i:=|\vex(S_i)\cap\vex(P_{i-1})|$, where $P_{i-1}=\conv(S_1\cup\cdots\cup S_{i-1})$ for $2\leq i\leq t$ and $P_{0}$ is the empty set. We have the following:
    \begin{align}
        d_1+\cdots+d_t=d+\Sigma_{i=1}^{t}r_i;\label{sigma di}\\
        r_i<d_i\leq d-t+1, \forall 1\leq i\leq t;\label{bd di}\\
        |\vex(P)|=d+t.\label{number of vert}
    \end{align}
\end{thm}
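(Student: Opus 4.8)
The plan is to follow the classical argument of Kasprzyk [Kas10, Proposition~3.2] adapted to the $\epsilon$-lc setting as in [BKN22, Corollary~2.3], building the decomposition greedily and maintaining at each stage an inductive invariant that controls dimensions and overlaps. First I would observe that since $P$ is minimal but not a simplex, $P$ has at least $d+2$ vertices; I pick any vertex $v_1$ and let $S_1$ be... more precisely, I build the simplices in reverse — start from a maximal-dimensional face structure. Concretely: because $P$ is minimal, for every vertex $v$ the polytope $\conv(\vex(P)\setminus\{v\})$ fails to be a $d$-dimensional $\epsilon$-lc Fano polytope, which (since it is still a lattice polytope with the remaining vertices primitive) means either it drops dimension or it loses the property that $\inte(\epsilon\,\cdot)\cap\ZZ^d=\{\0\}$ — i.e. removing $v$ "exposes" a lattice point $w_v$ on the boundary of $\epsilon\cdot\conv(\vex(P)\setminus\{v\})$, or makes the hull lower-dimensional. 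This exposed-point data is exactly what forces the recursive simplex structure.

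The core construction: choose $S_1=\conv(V_1)$ to be an inclusion-maximal subset $V_1\subset\vex(P)$ spanning an affine subspace of dimension $d_1$ such that $S_1$ is an $\epsilon$-lc Fano simplex (in its own span, with the induced lattice) and such that $\0$ lies in its relative interior — minimality of $P$ guarantees such a proper face-simplex exists and that $d_1\le d$. Having fixed $S_1,\dots,S_{i-1}$ with $P_{i-1}=\conv(S_1\cup\cdots\cup S_{i-1})$, if $P_{i-1}\ne P$ pick a vertex $v\in\vex(P)\setminus\vex(P_{i-1})$ and let $S_i$ be a minimal-dimensional $\epsilon$-lc Fano simplex with $v\in\vex(S_i)$, $\0\in\mathrm{relint}(S_i)$, and $\vex(S_i)\subset\vex(P)$, chosen so that $\vex(S_i)\setminus\vex(P_{i-1})$ is as small as possible; set $r_i=|\vex(S_i)\cap\vex(P_{i-1})|$ and $d_i=\dim S_i$. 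The identity \eqref{number of vert} $|\vex(P)|=d+t$ and \eqref{sigma di} $\sum d_i=d+\sum r_i$ then come from a bookkeeping/counting argument: each $S_i$ contributes $d_i+1$ vertices of which $r_i$ are old, so the total vertex count is $\sum_i(d_i+1-r_i)$, and one shows this equals $|\vex(P)|$ while simultaneously $\sum_i(d_i - r_i) = d$ because the new affine directions added at each step must exactly fill out $\bR^d$ (the union spans $\bR^d$ since $P$ is $d$-dimensional, and no redundancy occurs by the minimality in the choice of $S_i$). For \eqref{bd di}, the inequality $r_i<d_i$ is immediate since $S_i$ cannot have all its vertices already in $P_{i-1}$ (it contains the new vertex $v$), and one needs $r_i\geq 1$ for $i\geq2$ because $\0\in\mathrm{relint}(S_i)\cap\mathrm{relint}(P_{i-1})$ forces the affine spans to meet; the bound $d_i\le d-t+1$ follows by combining $\sum_j d_j=d+\sum_j r_j$ with $r_j\geq1$ for $j\geq2$ and $d_j\geq r_j+1\geq2$ for $j\geq2$ (and $d_j\geq 1$, $r_1=0$), which gives $d+\sum r_j = \sum d_j \geq d_i + (t-1)$ hence... one has to be a little careful, but the arithmetic closes once $t$ itself is pinned down by the requirement that the process terminates exactly when $P_t=P$.

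The main obstacle I anticipate is not any single inequality but rather justifying that the greedy choices can be made at all and are ``efficient'': namely, that at stage $i$ a suitable $\epsilon$-lc Fano \emph{simplex} $S_i$ through a new vertex exists with $\0$ in its relative interior and with $\vex(S_i)\subset\vex(P)$, and that one can arrange $r_i<d_i$ together with the global span condition forcing $\sum(d_i-r_i)=d$. This is where the $\epsilon$-lc minimality of $P$ is used in an essential way: removing the new vertex $v$ from $P$ must destroy the $\epsilon$-lc Fano property, and unwinding what ``destroy'' means (via the definition of $\epsilon$-lc Fano polytope, $\inte(\epsilon P)\cap\ZZ^d=\{\0\}$) produces precisely a lower-dimensional $\epsilon$-lc Fano simplex on a subset of the vertices containing $v$. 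I would isolate this as a lemma — essentially the content of [Kas10, Prop.~3.2] — prove it by induction on dimension, and then the counting identities \eqref{sigma di}, \eqref{bd di}, \eqref{number of vert} follow formally. Since the statement is cited as ``cf.'' [Kas10] and [BKN22], I expect the write-up to be a careful transcription of those arguments into the present normalization rather than anything genuinely new, and I would be explicit about where the $\epsilon$ enters versus the classical $\epsilon=1$ reflexive case.
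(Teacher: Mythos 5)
The paper itself does not reprove this statement: it cites \cite[Proposition~3.2]{Kas10} and \cite[Corollary~2.3]{BKN22}, and Remark~\ref{rem1} records the one point that needs checking in the $\epsilon$-lc setting, namely that since $\conv(\vex(P)\setminus\{v\})\subseteq P$, deleting a vertex can never create a new lattice point in the interior of the $\epsilon$-scaled polytope, so the only way the $\epsilon$-lc Fano property can be destroyed is that $\0$ leaves the ($d$-dimensional) interior. Your failure analysis gets exactly this point wrong: the alternative in which removing $v$ ``exposes'' a lattice point $w_v$ on the boundary cannot occur, and it is not what ``forces the recursive simplex structure''; the actual mechanism is a Carath\'eodory-type reduction of representations of $\0$ as convex combinations of vertices, together with the (easy) observation that any simplex $S\subseteq P$ with $\vex(S)\subseteq\vex(P)$ and $\0$ in its relative interior is automatically an $\epsilon$-lc Fano simplex in its linear span.

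More seriously, the counting part of your sketch rests on false auxiliary claims and does not close. The claims $r_i\geq 1$ and $d_i\geq r_i+1\geq2$ for $i\geq2$ are not true: for the minimal $\epsilon$-lc Fano polytope $P=\conv(\pm e_1,\cdots,\pm e_d)$ one has $t=d$, $S_i=\conv(\pm e_i)$, $d_i=1$ and $r_i=0$ for every $i$ (consecutive pieces meet only at $\0$, which is not a vertex), and this example is consistent with \eqref{sigma di}--\eqref{number of vert}. Even granting your claims, the arithmetic for \eqref{bd di} does not follow: from \eqref{sigma di} and $r_j\leq d_j-1$ one only obtains $d_i\leq d-t+1+r_i$, and you acknowledge the gap without repairing it. Finally, the identity \eqref{sigma di} itself --- equivalently, that each $S_i$ adds exactly $d_i-r_i$ new linear dimensions, i.e.\ that $\operatorname{lin}(S_i)\cap\operatorname{lin}(P_{i-1})$ contributes no more than the span of the $r_i$ shared vertices --- is asserted via ``no redundancy occurs by the minimality in the choice of $S_i$'' but never proved; this, together with $r_i<d_i$ (which does use minimality of $P$), is precisely the nontrivial content of \cite[Proposition~3.2]{Kas10} and \cite[Corollary~2.3]{BKN22}. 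So your proposal reproduces the easy parts (existence of the simplices, and deducing \eqref{number of vert} from \eqref{sigma di} by counting new vertices) but leaves the substantive assertions unproved, whereas the paper's own treatment is a citation plus the correct reduction recorded in Remark~\ref{rem1}.
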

\begin{remark}\label{rem1}
    In the proof of \cite[Proposition~3.2]{Kas10}, only the property that Fano polytopes have $\0$ in the interior really makes a difference. So the decomposition strategy also works for minimal $\epsilon$-lc Fano polytopes.
\end{remark}

\subsection{Anti-canonical volumes of toric Fano varieties}\label{toric}
We refer to \cite{CLS11} and \cite{Fulton} for the basic knowledge in toric geometry. 

Let $X$ be a $d$-dimensional $\epsilon$-lc toric Fano variety associated to the fan $\Delta\subset\bR^d$. Denote by $P_X$ the convex hull $\conv(e_i, e_i\in\Delta(1))$, where $\Delta(1)$ is the set of primitive generators of one-dimensional rays of $\Delta$. Then 
$$\mld(X)=\inf\{t>0| tP_X\cap\ZZ^d\supsetneq\{\0\}\}\geq\epsilon,$$ which implies 
$P_X$ is an $\epsilon$-lc Fano polytope. We refer to \cite[Subsections~6.1 and 6.2]{Ambro16} for the details. The dual polytope of $P_X$ is just the moment polytope associated to $-K_X$:
$$P_{X}^{*}=\{m\in\bR^d|<m, e_i>+1\geq0, \forall e_i\in\Delta(1)\}.$$
According to \cite[Theorem~13.4.3]{CLS11}, we have 
$$\Vol(-K_X)=\Vol(P_{X}^{*})=d!\vol_\ZZ(P_{X}^{*}).$$
The following are known results for the optimal upper bounds for anti-canonical volumes of canonical toric Fano varieties in dimension $d\geq3$.


\begin{thm}[{cf. \cite[Theorem~4.6]{Kas10}, \cite[Theorem~2.11]{AKN16} and \cite[Corollary~1.3]{BKN22}}]\label{bounds for d3}
    Fix a positive integer $d\geq3$. Let $X$ be a $d$-dimensional canonical toric Fano variety. Then 
    $$\Vol(-K_X)\leq 2u_{d, 1}^{2}.$$
   For $d=3$, the equality holds if and only if $X\simeq\bP(6, 4, 1, 1)$ or $\bP(3, 1, 1, 1)$; for $d\geq4$, the equality holds if and only if $X\simeq\bP(\frac{2u_{d, 1}}{1+u_{1, 1}}, \cdots, \frac{2u_{d, 1}}{1+u_{d-1, 1}}, 1, 1)$. 
\end{thm}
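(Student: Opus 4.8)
The plan is to run, at $q=1$, the same machinery the paper develops for general $q$; this is also essentially the argument of \cite{Kas10,AKN16,BKN22}. By the toric dictionary of Subsection~\ref{toric}, setting $P:=P_X$ one has $\Vol(-K_X)=\Vol(P^*)$ with $P$ a $d$-dimensional canonical (i.e.\ $1$-lc) Fano polytope, so it suffices to bound $\Vol(P^*)$ over all such $P$ and to identify the equality cases. Since $P'\subseteq P$ forces $P'^*\supseteq P^*$ and hence $\vol_\ZZ(P'^*)\geq\vol_\ZZ(P^*)$, with strict inequality as soon as $P'\subsetneq P$ (both duals being full-dimensional bounded bodies), I would first replace $P$ by a minimal canonical Fano polytope $P'\subseteq P$; proving the bound for $P'$ then suffices, and equality for $X$ forces $P$ itself to be minimal.

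Consider next the case where $P$ is a simplex $S$. Here Corollary~\ref{dual simplex vol bd} (ultimately Pikhurko's bound, Theorem~\ref{simplex vol bd}) reduces everything to the sharp lower bound $\beta_1\cdots\beta_d\geq\frac{1}{2u_{d,1}^2}$, where $\beta_1\geq\cdots\geq\beta_{d+1}>0$ are the barycentric coordinates of $\0$ with respect to $S$. To obtain this I would first extract a Product--Sum type inequality for the ordered $\beta_i$ from the hypothesis $\inte(S)\cap\ZZ^d=\{\0\}$: the absence of a nonzero interior lattice point, fed through Theorem~\ref{integral solution} applied to a suitable $d\times d$ matrix associated with $S$, yields for each initial segment $\beta_1,\dots,\beta_k$ an inequality relating its partial product to the remaining tail sum, whose extremal behaviour is governed by the recursion $u_{k+1,1}=u_{k,1}(u_{k,1}+1)$. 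A finite greedy/exchange optimization over ordered tuples then shows $\beta_1\cdots\beta_d$ is minimized precisely at the data $\bigl(\tfrac{1}{1+u_{1,1}},\dots,\tfrac{1}{1+u_{d-1,1}},\tfrac{1}{2u_{d,1}},\tfrac{1}{2u_{d,1}}\bigr)$, which by \eqref{product} has product $\frac{1}{2u_{d,1}^2}$; combining this forced barycentric data with the equality case of Corollary~\ref{dual simplex vol bd} (a rigidity argument) pins down $S$ up to unimodular equivalence, yielding $X\simeq\bP\bigl(\tfrac{2u_{d,1}}{1+u_{1,1}},\dots,\tfrac{2u_{d,1}}{1+u_{d-1,1}},1,1\bigr)$. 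For $d=3$ the optimization has a second, coincidental minimizing tuple $\bigl(\tfrac12,\tfrac16,\tfrac16,\tfrac16\bigr)$ of the same product $\tfrac1{72}$, corresponding to $\bP(3,1,1,1)$; the doubly-exponential growth of $u_{k,1}$ eliminates all such ``short'' alternatives once $d\geq4$, which is exactly why the $d=3$ case has two equality examples and higher dimensions only one.

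If instead $P$ is minimal but not a simplex, I would invoke the decomposition Theorem~\ref{decomp}, writing $P=\conv(S_1\cup\cdots\cup S_t)$ with $2\leq t\leq d$ canonical Fano simplices $S_i$ of dimension $d_i\leq d-t+1\leq d-1$ subject to \eqref{sigma di}--\eqref{number of vert}. Following the integration method of \cite{BKN22}, one expresses $\vol_\ZZ(P^*)$ as an iterated integral whose integrand is bounded, fibrewise, by the $d_i$-dimensional dual simplex bounds already established, and then combines these using the recursion $u_{k+1,1}=u_{k,1}(u_{k,1}+1)$ together with the numerical constraints on $(d_i,r_i,t)$ to conclude that $\Vol(P^*)$ lies strictly below $2u_{d,1}^2$. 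This settles the inequality in this case and shows equality never occurs here, so the full equality classification is supplied by the simplex analysis.

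I expect the main obstacle to be the simplex step: both deriving the correct Product--Sum inequality from Theorem~\ref{integral solution}, and then running the optimization with enough precision to isolate the \emph{unique} minimizing tuples --- in particular, to uncover the extra $d=3$ solution rather than overlook it. The non-simplex step is comparatively routine bookkeeping, and the numerics are comfortable since $u_{k,1}$ grows doubly exponentially, but one still has to check that the integration bound stays below the threshold $2u_{d,1}^2$ in the most dangerous configurations (small $t$, a single dominant piece).
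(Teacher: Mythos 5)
The paper itself does not prove Theorem~\ref{bounds for d3}: this is the known $q=1$ case, quoted from \cite{Kas10}, \cite{AKN16} and \cite{BKN22}, and the paper's own quantitative arguments (Theorem~\ref{bd for the dual vol of s}(c), Propositions~\ref{1st bd} and \ref{2nd bd}) are run only for $q\geq2$, deferring to those references whenever $q=1$. Your plan is essentially the argument of the cited works, i.e.\ the paper's general-$q$ machinery specialized to $q=1$ (toric dictionary, reduction to minimal polytopes, Product--Sum inequalities from Theorem~\ref{integral solution}, the exchange optimization with the two tied $d=3$ tuples $(\tfrac12,\tfrac13,\tfrac1{12},\tfrac1{12})$ and $(\tfrac12,\tfrac16,\tfrac16,\tfrac16)$, and the decomposition plus integration method for non-simplices), so in strategy you and the paper/its sources agree, and your outline is sound.

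Two places where the sketch is thinner than the actual work it stands in for. First, the equality classification is not just ``rigidity in Pikhurko's bound'': the minimizing barycentric tuple does not by itself determine $S$ up to unimodular equivalence. One needs the identity $\Vol(S)\cdot\Vol(S^*)=1/\Pi_{i=1}^{d+1}\beta_i$ (cf.\ \cite{Nill07}) to pin down $\Vol(S)$ and then lattice-normal-form arguments in the spirit of Lemmas~\ref{unique1} and \ref{unique2}; for $d=3$ both tuples must be realized and their simplices classified, which is where $\bP(6,4,1,1)$ and $\bP(3,1,1,1)$ arise (in \cite{Kas10} this comes from a full classification). Second, calling the non-simplex case ``routine bookkeeping'' undersells it: for $q=1$ the crude product bound \eqref{ineq for dual vol} does \emph{not} beat $2u_{d,1}^2$ in the configuration $t=2$, $d_1=d_2=d-1$, and closing that case --- via the integration inequality of Lemma~\ref{integration} combined with the sharp lower bound and equality analysis for the full product $\Pi_{i=1}^{d}\beta_i$ of a $(d-1)$-dimensional canonical Fano simplex (the $q=1$ analogue of Theorem~\ref{lower bd for bc d+1}, whose minimizer has pairwise distinct entries, contradicting the ``two equal coordinates'' equality condition) --- is precisely the main technical contribution of \cite{BKN22}. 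You point at the right tool, but the strictness there is the delicate step, not an afterthought.
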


\section{Inequalities for barycentric coordinates}

Fix two integers $d\geq2$ and $q\geq1$. Let $S$ be a $d$-dimensional $\frac{1}{q}$-lc Fano simplex. Suppose $\beta_1\geq\beta_2\geq...\geq\beta_{d+1}$ are the barycentric coordinates of $\0\in \inte(S)$ with respect to the vertices $v_1, v_2,...,v_{d+1}$. In this section, we will give lower bounds for  $\Pi_{i=1}^{d+1}\beta_i$ and $\Pi_{i=1}^{d}\beta_i$, which are the main ingredients for the proof of the main results.

By generalizing  the proof of \cite[Theorem~1.1]{Aver12} to the situation considered here, we can get generalized Product-Sum inequalities for barycentric coordinates.
\begin{thm}\label{PS ineq}
Keep the same notation as above. Then the barycentric coordinates $\beta_1, \beta_2,...,\beta_{d+1}$ satisfy the generalized Product-Sum inequalities, that is, for each $t\in\{1, 2,..., d\}$, we have
    $$\Pi_{i=1}^{t}\beta_i\leq q^t\Sigma_{j=t+1}^{d+1}\beta_j.$$
\end{thm}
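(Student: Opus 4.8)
The plan is to mimic the argument of \cite[Theorem~1.1]{Aver12}, which in turn rests on Theorem~\ref{integral solution} (the $\ell_\infty$-version of Minkowski's theorem). Fix $t\in\{1,\dots,d\}$. The first step is to set up coordinates adapted to the simplex: after a unimodular change of coordinates there is no loss in assuming $v_{d+1}=\mathbf 0$ is \emph{not} used — rather, one works with the affine-linear functionals $\ell_1,\dots,\ell_{d+1}$ that read off the barycentric coordinates, i.e.\ $\ell_i(x)=\beta_i(x)$, normalized so that $\ell_i(v_j)=\delta_{ij}$ and $\sum_i \ell_i \equiv 1$. Then $\ell_i(\mathbf 0)=\beta_i$. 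Because $S$ is a Fano polytope, its vertices lie in $\ZZ^d$, and the key arithmetic input is that $\inte\big(\tfrac1q S\big)\cap\ZZ^d=\{\mathbf 0\}$; equivalently, no nonzero lattice point $z$ satisfies $\ell_i(z)>-\tfrac1q\big(1-\tfrac1q\big)^{-1}\cdots$ — more precisely, scaling $S$ by $\tfrac1q$ dilates the barycentric description so that $z\in\inte(\tfrac1q S)$ iff $\ell_i(z) > \tfrac{q-1}{q}\,\beta_i$-type inequalities hold; the clean way to phrase it is: $z\notin\inte(\tfrac1q S)$ for all $z\in\ZZ^d\setminus\{\mathbf 0\}$.

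The second step is the construction of a suitable linear map to feed into Theorem~\ref{integral solution}. Consider the $d\times d$ matrix $A$ whose rows are built from the functionals $\ell_1,\dots,\ell_t$ together with an appropriate complementary set chosen from $\ell_{t+1},\dots,\ell_{d+1}$ (using $\sum\ell_i\equiv1$ to reduce $d+1$ functionals to $d$ independent ones), each row scaled by a parameter to be optimized. One computes $|\det A|$ in terms of $\beta_1\cdots\beta_t$, the remaining $\beta_j$'s, and the scaling parameters — this is where the factor $q^t$ and the sum $\sum_{j=t+1}^{d+1}\beta_j$ enter, through the requirement that the box $\{\|Az\|_\infty<1\}$ be calibrated so that any integer point it contains would have to lie in $\inte(\tfrac1q S)$. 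Assuming for contradiction that $\Pi_{i=1}^t\beta_i > q^t\sum_{j=t+1}^{d+1}\beta_j$, one arranges the parameters so that $0<|\det A|<1$. Theorem~\ref{integral solution} then produces $z\in\ZZ^d\setminus\{\mathbf 0\}$ with $\|Az\|_\infty<1$; the third step is to check that the inequalities $|\ell_i(z)|$ small (for $i\le t$) combined with the control on the complementary functionals force $z\in\inte(\tfrac1q S)$, contradicting $\tfrac1q$-lc-ness. Then $t$ was arbitrary, giving all $d$ inequalities.

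The main obstacle I anticipate is bookkeeping at the third step: translating "$\|Az\|_\infty<1$" into "$z\in\inte(\tfrac1q S)$." One has direct bounds on $\ell_1(z),\dots,\ell_t(z)$ and on the $t$ chosen complementary functionals, but $S$ has $d+1$ facets, so one must recover bounds on \emph{all} barycentric coordinates of $z$ — the remaining ones are controlled via the affine relation $\sum_{i=1}^{d+1}\ell_i\equiv1$, and verifying that the resulting estimates are strong enough (strict, and with the correct $\tfrac1q$ scaling) is the delicate part. A secondary subtlety is that the inequality to be proved is non-strict ($\le$) while Theorem~\ref{integral solution} requires the strict determinant bound $|\det A|<1$; this is handled in the usual way by a limiting argument — prove the strict inequality $\Pi\beta_i < q^t\sum\beta_j + \delta$ for every $\delta>0$ and let $\delta\to0$ — so it does not cause real trouble, but it must be stated. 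I would also remark (as the theorem's hypotheses already allow) that the ordering $\beta_1\ge\cdots\ge\beta_{d+1}$ is not actually needed for the proof of a single inequality but guarantees the statement is most useful when $\Pi_{i=1}^t\beta_i$ is the product of the \emph{largest} $t$ coordinates.
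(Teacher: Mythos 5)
Your plan follows the paper's general strategy in spirit (argue by contradiction and invoke Theorem~\ref{integral solution}), but the decisive construction is missing, and the specific route you sketch --- a $d\times d$ matrix acting on the ambient lattice $\ZZ^d$ whose rows are rescaled barycentric functionals --- runs into exactly the difficulty you flag at the end without resolving it. The paper does not work in the ambient lattice at all: for fixed $t$ it uses the $(t+1)\times(t+1)$ matrix whose $i$-th row ($i\le t$) has $q/\beta_i$ on the diagonal and $-q$ in the last column, and whose last row is $(-1,\dots,-1,1)$; its determinant is exactly $q^t\Sigma_{j=t+1}^{d+1}\beta_j/\Pi_{i=1}^{t}\beta_i$, which lies in $(0,1)$ precisely under the contradiction hypothesis (so no $\delta$-limiting argument is needed, contrary to your worry --- the assumed negation is already strict). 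Theorem~\ref{integral solution} applied in $\ZZ^{t+1}$ yields integers $m_1,\dots,m_t,m$, and the candidate lattice point is $v:=-q\Sigma_{i=1}^{t}m_i v_i$, which lies in $q\ZZ^d$ automatically. The key mechanism is then to rewrite $v=(mq+1)\0+v=\Sigma_{i=1}^{t}\bigl((mq+1)\beta_i-m_iq\bigr)v_i+\Sigma_{j=t+1}^{d+1}(mq+1)\beta_j v_j$: the coefficients on $v_j$ for $j>t$ are $(mq+1)\beta_j>0$ for free, the bounds $|m_iq/\beta_i-mq|<1$ give positivity of the first $t$ coefficients, and the last row forces $\Sigma_{i=1}^{t}m_i=m$ so that the coefficients sum to $1$; hence $\frac1q v\in\inte(\frac1q S)\cap\ZZ^d\setminus\{\0\}$, the desired contradiction.

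In your version this step is not carried out: the matrix is never specified, its determinant is not computed, and the implication ``$||Az||_\infty<1$ implies $z\in\inte(\frac1q S)$,'' which you yourself identify as the main obstacle, is the entire content of the proof. There is also a concrete reason your setup is problematic rather than merely unfinished: bounds on $|\ell_i(z)|$ carry no sign information, so smallness of $t$ (or even $d$) of the barycentric functionals at $z$ cannot by itself force strict positivity of all $d+1$ barycentric coordinates of a point of the dilate $\frac1q S$. Some normalizing device like the paper's auxiliary integer $m$ --- i.e., comparing $z$ against a multiple of the relation $\Sigma_{i=1}^{d+1}\beta_i v_i=\0$ --- is unavoidable, and once you introduce it you are led back to the $(t+1)$-dimensional construction above. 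As it stands, the proposal is a plausible plan in the right direction, not a proof.
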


\begin{proof}
    Fix $t\in\{1, 2,..., d\}$. Suppose to the contrary that $\Pi_{i=1}^{t}\beta_i>q^t\Sigma_{j=t+1}^{d+1}\beta_j.$ We will derive a contradiction by showing that there is an interior lattice point in $\frac{1}{q}S$ different from $\0$.
    Consider the $(t+1)\times(t+1)$ matrix $A:=$
    \[
\begin{bmatrix}
  \frac{q}{\beta_1} & 0 & \cdots & 0 & -q \\
  0 & \frac{q}{\beta_2} & \cdots & 0 & -q \\
  \vdots & \vdots & \ddots & \vdots & \vdots \\
  0 & 0 & \cdots & \frac{q}{\beta_t} & -q \\
  -1 & -1 & \cdots & -1 & 1
\end{bmatrix}
\]
Then $0<\det A=\frac{q^t(1-\Sigma_{i=1}^{t}\beta_i)}{\Pi_{i=1}^{t}\beta_i}=\frac{q^t\Sigma_{j=t+1}^{d+1}\beta_j}{\Pi_{i=1}^{t}\beta_i}<1$. By Theorem~\ref{integral solution}, there exists a point $(m_1, m_2,\cdots,m_t, m)\in\ZZ^{t+1}\setminus\{\0\}$ such that 
\begin{align}\label{ineq}
    \max\{|\frac{m_1 q}{\beta_1}-mq|,\cdots,|\frac{m_t q}{\beta_t}-mq|, |\Sigma_{i=1}^{t}m_i-m|\}<1.
\end{align}

Note that $m$ cannot be $0$, otherwise, as indicated by \eqref{ineq}, all $m_i$ would be $0$. Possibly by replacing $m_1, m_2,\cdots,m_t, m$ with $-m_1, -m_2,\cdots,-m_t, -m$, we can assume that $m>0$. By construction $|\Sigma_{i=1}^{t}m_i-m|<1$ and $m_1, m_2,\cdots,m_t, m$ are all integers, we have $\Sigma_{i=1}^{t}m_i=m$. 

Set $v=-q\Sigma_{i=1}^{t}m_i v_i$. Then $v=(mq+1)\0+v=\Sigma_{i=1}^{t}((mq+1)\beta_i-m_i q) v_i+\Sigma_{j=t+1}^{d+1}(mq+1)\beta_j v_j$. For each $i\in\{1,\cdots,t\}$, inequality \eqref{ineq} implies $\frac{m_i q}{\beta_i}-mq<1$, which in turn implies $(mq+1)\beta_i-m_i q>0$. On the other hand, $\Sigma_{i=1}^{t}((mq+1)\beta_i-m_i q)+\Sigma_{j=t+1}^{d+1}(mq+1)\beta_j=1$. Thus we have $v\in\inte(S)$, which implies that $\0\neq\frac{1}{q}v\in\inte(\frac{1}{q}S)\cap\ZZ^d$, contradicting the assumption that $S$ is a $\frac{1}{q}$-lc Fano simplex.
\end{proof}

\begin{remark}\label{rem2}
    From the proof we can see that the conclusions also hold for a $d$-dimensional latttice simplex $S$ in $\bR^d$ such that $\inte(\frac{1}{q}S)\cap \ZZ^d=\{\0\}$.
\end{remark}

As indicated by Theorem~\ref{PS ineq}, in order to find lower bounds for $\Pi_{i=1}^{d+1}\beta_i$ and $\Pi_{i=1}^{d}\beta_i$, we need to consider the set of $(d+1)$-tuples $(x_1, x_2, \cdots, x_{d+1})\in \bR^{d+1}$  such that the following conditions hold:
\begin{align}
x_1+\cdots+x_{d+1}&{}=1,\label{relation1} \\
1\geq x_1\geq x_2\geq\cdots\geq x_{d+1}&{}\geq0,\label{relation2}\\
\forall t\in\{1, 2, \cdots, d\}, \Pi_{i=1}^{t}x_i&{}\leq q^t\Sigma_{j=t+1}^{d+1}x_j.\label{relation3}
\end{align}
We denote this set by $X(d, q)$. Then $(\beta_1, \cdots, \beta_{d+1})\in X(d, q)$ by Theorem~\ref{PS ineq}. For convenience, let $\ps(x)_t (t\in\{1, 2, \cdots, d\})$ be the $t$-th Product-Sum inequality $\Pi_{i=1}^{t}x_i\leq q^t\Sigma_{j=t+1}^{d+1}x_j$ with respect to the positive integer $q$ and the $(d+1)$-tuple $x=(x_1, x_2, \cdots, x_{d+1})\in\bR^{d+1}$. By definition, for any $x\in X(d, q)$, $\ps(x)_t (t\in\{1, 2, \cdots, d\})$ holds. And $x\in X(d, q)$ also satisfies the following basic properties (a generalized version of \cite[Lemma~4.1]{AKN16}):

\begin{lem}\label{eq}
Let $x=(x_1, x_2, \cdots, x_{d+1})\in X(d, q)$. Assume that $d\geq2$. Then the following conditions hold:
\begin{enumerate}
    \item $x_1<1$ and $x_{d+1}>0$;
    \item If the equality in $\ps(x)_t$ holds for some $t\in\{1,\cdots,d\}$, then $x_t>x_{t+1}$;
    \item Fix $k\in\{1,\cdots, d\}$, if the equality in $\ps(x)_t$ holds for any $t\in\{1,\cdots,k\}$, then $x_t=\frac{q}{1+u_{t, q}}$ for each $t\in\{1,\cdots,k\}$.
\end{enumerate}
\end{lem}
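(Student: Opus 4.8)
The plan is to prove the three parts in order, using only the defining relations \eqref{relation1}--\eqref{relation3} of $X(d,q)$ together with the recursion $u_{1,q}=q$, $u_{k+1,q}=u_{k,q}(u_{k,q}+1)$ and the identities \eqref{sum1}, \eqref{product}.

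First, for part (1), I would argue by contradiction. If $x_1=1$, then \eqref{relation1} together with \eqref{relation2} forces $x_2=\cdots=x_{d+1}=0$; but then $\ps(x)_1$ reads $1=x_1\le q\sum_{j=2}^{d+1}x_j=0$, which is absurd. For $x_{d+1}>0$: if $x_{d+1}=0$, look at the largest index $\ell\le d$ with $x_\ell>0$ (such $\ell$ exists since $x_1>0$ by \eqref{relation1}); then $\ps(x)_\ell$ gives $\prod_{i=1}^\ell x_i\le q^\ell\sum_{j=\ell+1}^{d+1}x_j=0$, contradicting $x_1,\dots,x_\ell>0$. (Here I use $d\ge2$ only mildly; the essential point is $\ell\le d$ so that $\ps(x)_\ell$ is available.)

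For part (2), suppose equality holds in $\ps(x)_t$, i.e. $\prod_{i=1}^t x_i=q^t\sum_{j=t+1}^{d+1}x_j$, but $x_t=x_{t+1}$. The idea is that $\ps(x)_{t}$ and $\ps(x)_{t-1}$ (or a direct manipulation when $t=1$, using instead the normalization) are then incompatible. Concretely, from $x_t=x_{t+1}$ I would compare the $t$-th inequality with the $(t-1)$-th: writing $\sum_{j=t}^{d+1}x_j = x_t+\sum_{j=t+1}^{d+1}x_j$ and substituting the equality for the tail sum, one gets $\prod_{i=1}^{t-1}x_i\le q^{t-1}\bigl(x_{t+1}+\tfrac{1}{q^t}\prod_{i=1}^t x_i\bigr)=q^{t-1}x_{t+1}+\tfrac1q x_t\prod_{i=1}^{t-1}x_i$; since $x_1<1$ by part (1) and $q\ge1$, the factor $\tfrac1q x_t<1$, and one can solve to obtain $\prod_{i=1}^{t-1}x_i < \tfrac{q^{t-1}x_{t+1}}{1-x_t/q}$, which combined with $x_t=x_{t+1}$ and the monotonicity $x_{t-1}\ge x_t$ yields a numerical contradiction with the chain $x_1\ge\cdots\ge x_t$. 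I expect this step to need a slightly careful case split (the case $t=1$ uses \eqref{relation1} in place of $\ps(x)_0$, since $\sum_{j=1}^{d+1}x_j=1$ plays the role of the "zeroth" sum), and this is the step I anticipate being the main obstacle — getting the inequality chain to close cleanly rather than just "morally."

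For part (3), I would induct on $k$. If equality holds in $\ps(x)_t$ for all $t\le k$, then by part (2) we have strict drops $x_t>x_{t+1}$ for each such $t$. The base case $k=1$: equality in $\ps(x)_1$ says $x_1=q\sum_{j=2}^{d+1}x_j=q(1-x_1)$ by \eqref{relation1}, so $x_1=\tfrac{q}{1+q}=\tfrac{q}{1+u_{1,q}}$. For the inductive step, assume $x_i=\tfrac{q}{1+u_{i,q}}$ for $i<t$; equality in $\ps(x)_t$ gives $\prod_{i=1}^t x_i=q^t\sum_{j=t+1}^{d+1}x_j=q^t\bigl(1-\sum_{i=1}^t x_i\bigr)$. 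Using the inductive values and identity \eqref{product}, $\prod_{i=1}^{t-1}x_i=\prod_{i=1}^{t-1}\tfrac{q}{1+u_{i,q}}=q^{t-1}\prod_{i=1}^{t-1}\tfrac1{1+u_{i,q}}=q^{t-1}\cdot\tfrac{q}{u_{t,q}}=\tfrac{q^t}{u_{t,q}}$; and using \eqref{sum1} (truncated), $1-\sum_{i=1}^{t-1}x_i=1-\sum_{i=1}^{t-1}\tfrac{q}{1+u_{i,q}}=\tfrac{q}{u_{t,q}}+\bigl(1-\sum_{i=1}^{t-1}\tfrac{q}{1+u_{i,q}}-\tfrac{q}{u_{t,q}}\bigr)$ — actually the clean route is: \eqref{sum1} with $p=t$ gives $\sum_{i=1}^{t-1}\tfrac{q}{1+u_{i,q}}+\tfrac{q}{u_{t,q}}=1$, so $1-\sum_{i=1}^{t-1}x_i=\tfrac{q}{u_{t,q}}$. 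Then the equality $\tfrac{q^t}{u_{t,q}}x_t=q^t\bigl(\tfrac{q}{u_{t,q}}-x_t\bigr)$ becomes $x_t/u_{t,q}=q/u_{t,q}-x_t$, i.e. $x_t(1+\tfrac1{u_{t,q}})=q/u_{t,q}$, giving $x_t=\tfrac{q}{u_{t,q}+1}=\tfrac{q}{1+u_{t,q}}$, completing the induction. The only subtlety is making sure the truncated forms of \eqref{sum1} and \eqref{product} (which in the paper are stated for the full tuple up to $p$) are applied with $p=t\le d$, which is legitimate since those identities hold for every $p\ge2$, and the $p=1$ case is handled by hand via \eqref{relation1}.
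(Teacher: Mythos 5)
Your parts (1) and (3) are correct and essentially the paper's own arguments: (1) is the same contradiction argument (your ``largest positive index'' is just the paper's downward induction on the tail), and (3) is the same induction, solving the equality $\prod_{i=1}^{t}x_i=q^{t}(1-\sum_{i=1}^{t}x_i)$ with the truncated identities \eqref{sum1} and \eqref{product}; note you do not actually need part (2) there, since the equalities alone determine the $x_i$.

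Part (2), however, contains a genuine gap, exactly at the step you flag as the anticipated obstacle. The detour through $\ps(x)_{t-1}$ cannot close: what you derive is $\prod_{i=1}^{t-1}x_i\,(1-\tfrac{x_t}{q})\le q^{t-1}x_t$, i.e.\ the \emph{upper} bound $\prod_{i=1}^{t-1}x_i\le \tfrac{q^{t}x_t}{q-x_t}$. This is a harmless consequence of the constraints, not a contradiction: the only lower bound available from the monotonicity chain is $\prod_{i=1}^{t-1}x_i\ge x_t^{t-1}$, and comparing the two reduces to $x_t^{t-2}(q-x_t)\le q^{t}$, which holds automatically because $0<x_t<1\le q$. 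So no ``numerical contradiction with the chain $x_1\ge\cdots\ge x_t$'' follows from what you wrote. The missing idea is that the contradiction comes from the assumed equality in $\ps(x)_t$ itself, which under $x_t=x_{t+1}$ gives a \emph{lower} bound on the product: $x_t\prod_{i=1}^{t-1}x_i=\prod_{i=1}^{t}x_i=q^{t}\sum_{j=t+1}^{d+1}x_j\ge q^{t}x_{t+1}=q^{t}x_t$, hence $\prod_{i=1}^{t-1}x_i\ge q^{t}\ge 1$, contradicting $\prod_{i=1}^{t-1}x_i\le x_1<1$ when $t\ge2$; for $t=1$ one instead uses $d\ge2$ and $x_{d+1}>0$ to get $x_1=q\sum_{j\ge2}x_j\ge q(x_2+x_{d+1})>qx_2=qx_1\ge x_1$. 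This is precisely the paper's one-line argument $x_t\ge\prod_{i=1}^{t}x_i=q^{t}\sum_{j=t+1}^{d+1}x_j\ge x_{t+1}$, where the first inequality is strict for $t\ge2$ (since $x_1<1$, $x_t>0$) and the second is strict for $t=1$ (since $d\ge2$, $x_{d+1}>0$); replacing your $\ps(x)_{t-1}$ manipulation by this chain repairs the proof.
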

\begin{proof}
    $(1)$ If $x_1=1$, then \eqref{relation1} and \eqref{relation2} imply $x_i=0$ for all $i\in\{2,\cdots,d+1\}$, which contradicts $\ps(x)_1$. If $x_{d+1}=0$, then \eqref{relation2} and $\ps(x)_d$ imply that $x_{d}=0$. Inductively we can apply \eqref{relation2} and \eqref{relation3} to deduce that $x_i=0$ for all $i\in\{1,\cdots,d+1\}$, which is a contradiction.

    $(2)$ By assumption, we have 
    $$x_t\geq\Pi_{i=1}^{t}x_i=q^t\Sigma_{j=t+1}^{d+1}x_j\geq x_{t+1},$$
    where at least one of the two inequalities is strict because $d\geq2$ and $0<x_i<1$ for all $i\in\{1,\cdots,d+1\}$. Hence $x_t>x_{t+1}$.
    
    $(3)$ For $t=1$, $x_1=q\Sigma_{j=2}^{d+1}x_j$ implies $x_1=q(1-x_1)$, and hence $x_1=\frac{q}{1+u_{1, q}}$. Assume $k\geq2$ and for some $1\leq t\leq k-1$, we have $x_i=\frac{q}{1+u_{i, q}}$ for each $i\in\{1, \cdots, t\}$. Then $\Pi_{i=1}^{t+1}x_i=q^{t+1}\Sigma_{j=t+2}^{d+1}x_j$ implies $(\Pi_{i=1}^{t}\frac{q}{1+u_{i, q}})x_{t+1}=q^{t+1}(1-\Sigma_{i=1}^{t}\frac{q}{1+u_{i, q}}-x_{t+1})$. Applying \eqref{sum1} and \eqref{product} with $p=t+1$, we can get $x_{t+1}=\frac{q}{1+u_{t+1, q}}.$ So we conclude the proof by induction.
\end{proof}

Now we come to study the lower bounds of $\Pi_{i=1}^{d+1}\beta_i$ and $\Pi_{i=1}^{d}\beta_i$.

\begin{lem}\label{d+1prod}
    Let $f_{d+1}: \bR^{d+1}\longrightarrow \bR$ be the continuous function defined by sending $x=(x_1, \cdots, x_{d+1})$ to $f_{d+1}(x)=\Pi_{i=1}^{d+1}x_i$. Let $b$ be the $(d+1)$-tuple $(b_1, \cdots, b_{d+1})\in X(d, q)$ such that $f_{d+1}(b)=\min\{f_{d+1}(x)|x\in X(d, q)\}$. Then there exists an integer $k\in\{0, 1, \cdots, d\}$ such that 
    \begin{enumerate}
        \item [(a)] $b_l=b_{l+1}$ for $k+1\leq l\leq d$, and
        \item [(b)] the equality in $\ps(b)_l$ holds for $1\leq l\leq k$.
    \end{enumerate}
\end{lem}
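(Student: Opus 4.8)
The plan is to analyze the minimizer $b \in X(d,q)$ by exploiting the compactness of $X(d,q)$ together with a local perturbation argument. First I would note that $X(d,q)$ is a closed, bounded subset of $\bR^{d+1}$ (it is cut out by the closed conditions \eqref{relation1}, \eqref{relation2}, \eqref{relation3}), hence compact, so the minimum of the continuous function $f_{d+1}$ is attained; let $b$ be such a minimizer. By Lemma~\ref{eq}(1) we know $0 < b_i < 1$ for all $i$, so in particular $f_{d+1}(b) > 0$. Now let $k$ be the largest index in $\{0,1,\dots,d\}$ such that the equality in $\ps(b)_l$ holds for all $1 \le l \le k$ (with $k=0$ meaning no such equality holds even for $l=1$). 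This is condition (b) by construction; the real content is to prove condition (a), namely that $b_l = b_{l+1}$ for all $k+1 \le l \le d$.

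The key step is the perturbation argument establishing (a). Suppose for contradiction that $b_l > b_{l+1}$ for some $l$ with $k+1 \le l \le d$; pick the smallest such $l$. The idea is to design a one-parameter family of moves inside $X(d,q)$ that strictly decreases $f_{d+1}$, contradicting minimality. The natural move is a "balancing" perturbation: for small $\delta > 0$, decrease some coordinate and increase another (respecting the ordering \eqref{relation2} and the sum constraint \eqref{relation1}), chosen so that the product $\prod x_i$ strictly drops. Concretely, because $\sum x_i$ is fixed, spreading mass from a larger coordinate toward a smaller one tends to increase the product (AM–GM intuition), so to \emph{decrease} the product one wants to push two nearly-equal coordinates \emph{apart}, or push mass from a middle coordinate out to the extremes. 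The candidate move I would use: since $b_l > b_{l+1}$, transfer a small amount $\delta$ from $b_{l+1}$ (and symmetrically absorb it somewhere to the right, or push it onto $b_1$) while keeping the tuple in $X(d,q)$; one checks that $\ps(b)_t$ for $t < l$ are unaffected or still strict (using that equality fails for $t = k+1, \dots, l$ by choice of $k$ and minimality of $l$, hence there is slack), that $\ps(b)_t$ for $t \ge l$ continues to hold because we are only making the relevant sums larger or products smaller, and that $f_{d+1}$ strictly decreases. Here Lemma~\ref{eq}(2) is useful: if equality held in some $\ps(b)_t$ with $t$ between $k+1$ and $l$, then $b_t > b_{t+1}$, which combined with the definition of $k$ forces a consistent picture of where the ties occur. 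Finally, Lemma~\ref{eq}(3) identifies the values $b_l = \frac{q}{1+u_{l,q}}$ on the equality range $1 \le l \le k$, which is not literally needed for the statement of this lemma but confirms the structure.

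The main obstacle I anticipate is verifying that the perturbed tuple stays inside $X(d,q)$ — specifically, keeping \emph{all} the Product-Sum inequalities \eqref{relation3} valid after the move, since increasing one coordinate can increase a product $\prod_{i=1}^t x_i$ on the left-hand side of $\ps(x)_t$ for larger $t$, potentially violating it. The resolution is to be careful about \emph{which} coordinates are moved: one should move mass among coordinates with indices $\ge k+1$ so that for $t \le k$ the inequalities $\ps(b)_t$ (which are equalities, hence tight) are literally preserved, and for $t \ge k+1$ there is strict slack at $b$ (by maximality of $k$, $\ps(b)_{k+1}$ is strict, and one argues the later ones have slack too, or handles them by the same continuity/openness reasoning), so a sufficiently small $\delta$ cannot break them. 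Once the family is shown to remain in $X(d,q)$ and to decrease $f_{d+1}$ to first order in $\delta$, the contradiction is immediate and the lemma follows.
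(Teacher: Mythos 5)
Your high-level strategy (a minimizer exists by compactness of $X(d,q)$; rule out the bad configuration by a small perturbation that stays in $X(d,q)$ and strictly decreases $f_{d+1}$) is the same as the paper's, and your reversed choice of $k$ (take the maximal initial run of equalities in $\ps(b)_l$ and then prove the constant tail) could in principle be made to work. The problem is that the proposal stops exactly where the work lies: no admissible perturbation is actually exhibited and verified, and the two concrete moves you do name fail. Pushing mass from $b_{l+1}$ onto $b_1$ increases the product $\prod_{i=1}^{t}x_i$ and decreases the sum $\sum_{j=t+1}^{d+1}x_j$ in every tight inequality $\ps(b)_t$ with $t\leq k$, so the perturbed point leaves $X(d,q)$ for every $\delta>0$ (you notice this and retreat to ``move mass among indices $\geq k+1$,'' but then no specific pair of indices is chosen). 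Transferring mass from $b_{l+1}$ ``somewhere to the right'' either \emph{increases} the product (moving mass from a strictly larger to a strictly smaller coordinate brings them closer) or violates the ordering \eqref{relation2} when the two coordinates are equal. Most seriously, your key feasibility claim --- ``for $t\geq k+1$ there is strict slack in $\ps(b)_t$'' --- is unjustified: maximality of $k$ only gives strictness of $\ps(b)_{k+1}$; for larger $t$ equality may hold a priori (by Lemma~\ref{eq}(2) it merely forces $b_t>b_{t+1}$, which is precisely the situation you are trying to exclude), and no ``continuity/openness reasoning'' can save a perturbation that increases the left side or decreases the right side of an inequality that is already tight.

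A genuine completion of your route needs a case analysis on the block structure of $(b_{k+1},\dots,b_{d+1})$, with Lemma~\ref{eq}(2) supplying the required strictness: if some maximal constant block $b_{i_1}=\cdots=b_{i_2}$ of length at least two occurs, perturb $b_{i_1}\mapsto b_{i_1}+\delta$, $b_{i_2}\mapsto b_{i_2}-\delta$ (the intermediate $\ps(b)_t$ are strict because $b_t=b_{t+1}$ there, and maximality of the block gives the room needed for \eqref{relation2}); otherwise all descents after $k$ are strict and one perturbs at $t=k+1$, using strictness of $\ps(b)_{k+1}$ and $b_k>b_{k+1}$ (again from Lemma~\ref{eq}(2) applied to the equality $\ps(b)_k$). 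These verifications are the actual content of the paper's proof, which proceeds in the opposite order (fix $k$ by the constant tail, prove the strict descents, then the equalities $\ps(b)_l$ for $l\leq k-1$ by a two-coordinate move, and finally $\ps(b)_k$ by the multi-coordinate perturbation $b(\delta)$ and a derivative-at-an-interior-minimum argument, needed because the naive two-coordinate move at $t=k$ is blocked by the ordering constraint on the constant tail). As written, your sketch asserts rather than constructs the crucial perturbation, so it is not yet a proof.
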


\begin{proof}
    Set $b_0:=1$, and choose $k\in\{0, 1, \cdots, d\}$ such that
    \begin{align}
        b_k>b_{k+1}=\cdots=b_{d+1}.\label{k}
    \end{align}

    If $k=0$, then there is nothing to prove. 

    So from now on, we may assume that $k\geq1$.

    \begin{claim}\label{strict order}
        If $k\geq2$, then $b_i>b_{i+1}$ for any $1\leq i\leq k-1$.
    \end{claim}
\begin{proof}
    If the conclusion does not hold, then we may find $1\leq i_1<i_2\leq k$ such that $b_{i_1-1}>b_{i_1}=\cdots=b_{i_2}>b_{i_2+1}$. Take a real number $\delta>0$ and set $b':=(b_1, \cdots, b_{i_1}+\delta, \cdots, b_{i_2}-\delta, \cdots, b_{d+1})$, i.e., $b'$ is attained from $b$ by adding $\delta$ to $b_{i_1}$ and subtracting $b_{i_2}$ by $\delta$. Then $b'$ satisfies condition \eqref{relation1}. And condition \eqref{relation2} holds for $b'$ if $\delta$ is sufficiently small. Furthermore, $b'$ aslo satisfies condition \eqref{relation3} for a sufficiently small $\delta$: in fact, for $1\leq l<i_1$, $\ps(b')_l$ holds because it is the same as $\ps(b)_l$; for $i_1\leq l<i_2$, note that by Lemma~\ref{eq}~$(2)$, $\ps(b)_l$ is strict, so $\ps(b')_l$ holds if we take $\delta>0$ small enough; for $i_2\leq l\leq d$, the left product part of $\ps(b')_l$ is strict less than that of $\ps(b)_l$ by the fact that $(b_{i_1}+\delta)(b_{i_2}-\delta)<b_{i_1}b_{i_2}$, and the right sum parts of them are the same, so $\ps(b')_l$ holds. Hence for a sufficiently small real number $\delta>0$, we have $b'\in X(d, q)$. But $(b_{i_1}+\delta)(b_{i_2}-\delta)<b_{i_1}b_{i_2}$ implies $f_{d+1}(b')<f_{d+1}(b)$, which contradicts the choice of $b$. So we finish the proof of the claim. 
\end{proof}  

\begin{claim}
    If  $k\geq2$, then the equality in $\ps(b)_l$ holds for any $1\leq l\leq k-1$. 
\end{claim}

\begin{proof}
    Suppose $\ps(b)_l$ is strict for some $1\leq l\leq k-1$. Set $b^*:=(b_1, \cdots, b_l+\delta, b_{l+1}-\delta, \cdots, b_{d+1})$ for some $\delta>0$. Then $b^*$ satisfies condition \eqref{relation1}. By claim~\ref{strict order} and \eqref{k}, we have $b_{l-1}>b_l$ and $b_{l+1}>b_{l+2}$, which imply condition \eqref{relation2} holds for $b^*$ if $\delta>0$ is small enough. By comparing $\ps(b^*)_t$ and $\ps(b)_t$ for each $1\leq t \leq d$, we can see that $\ps(b^*)_t$ holds if $\delta>0$ is small enough. Hence for a sufficiently small real number $\delta>0$, we have $b^*\in X(d, q)$. But $f_{d+1}(b^*)<f_{d+1}(b)$ by construction, contradicting the choice of $b$.
\end{proof}
To finish the proof, we need to show that the equality in $\ps(b)_k$ holds. Suppose to the contrary that $\Pi_{i=1}^{k}b_i<q^t\Sigma_{j=k+1}^{d+1}b_j$. For a real number $\delta$, set $$b(\delta):=(b_1, \cdots, b_k-(d+1-k)\delta, b_{k+1}+\delta, \cdots, b_{d+1}+\delta)$$ and $$g(\delta):=\frac{f_{d+1}(b(\delta))}{\Pi_{i=1}^{k-1}b_i}=(b_{d+1}+\delta)^{d+1-k}\cdot(b_k-(d+1-k)\delta).$$ Note that $\ps(b)_l$ is strict for each $k\leq l\leq d$ by Lemma~\ref{eq} $(2)$ and the assumptions. Arguing similarly as in the proof of the above two claims, we can have $b(\delta)\in X(d, q)$ as $\delta$ varies in a small open neighbourhood of $0$. By construction $g(\delta)$ has a local minimal value at $\delta=0$. So the derivative value of $g(\delta)$ at 0 must be $0$, which implies $b_{k}=b_{d+1}$, a contradiction.
    
\end{proof}

\begin{lem}\label{dprod}
    Let $f_{d}: \bR^{d+1}\longrightarrow \bR$ be the continuous function defined by sending $x=(x_1, \cdots, x_{d+1})$ to $f_{d}(x)=\Pi_{i=1}^{d}x_i$. Let $a$ be the $(d+1)$-tuple $(a_1, \cdots, a_{d+1})\in X(d, q)$ such that $f_{d}(a)=\min\{f_{d}(x)|x\in X(d, q)\}$. Then there exists an integer $k\in\{0, 1, \cdots, d-1\}$ such that 
    \begin{enumerate}
        \item [(a)] $a_l=a_{l+1}$ for $k+1\leq l\leq d$, and
        \item [(b)] the equality in $\ps(a)_l$ holds for $1\leq l\leq k$.
    \end{enumerate}
\end{lem}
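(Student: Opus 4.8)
The plan is to mimic closely the argument just given for Lemma~\ref{d+1prod}, adapting it to the function $f_d(x) = \Pi_{i=1}^d x_i$, which omits the last coordinate $x_{d+1}$. As before, set $a_0 := 1$ and choose $k \in \{0, 1, \cdots, d\}$ with $a_k > a_{k+1} = \cdots = a_{d+1}$; the point of the lemma is to show $k$ can in fact be taken in $\{0,\ldots,d-1\}$ and that all the first $k$ Product-Sum inequalities are equalities. If $k = 0$ there is nothing to prove, so assume $k \geq 1$. The first two claims transfer almost verbatim: if $k \geq 2$ then $a_i > a_{i+1}$ for all $1 \leq i \leq k-1$ (perturb by $+\delta$ at position $i_1$ and $-\delta$ at position $i_2$ inside a block of equal coordinates, using Lemma~\ref{eq}(2) to keep the intermediate inequalities strict and the product monotonicity $(a_{i_1}+\delta)(a_{i_2}-\delta) < a_{i_1}a_{i_2}$ to get a strict decrease of $f_d$), and likewise if $k \geq 2$ then $\ps(a)_l$ is an equality for $1 \leq l \leq k-1$. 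For both of these claims one must check that the perturbation does not affect $f_d$ adversely: since $i_1, i_2 \leq k \leq d$ and $l+1 \leq k \leq d$, the perturbed coordinates still lie among $x_1, \ldots, x_d$, so the computations on $f_d$ are the same as those on $f_{d+1}$ in the model proof.

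Next I would handle the equality in $\ps(a)_k$. Suppose it is strict. Here one must split on whether $k \leq d-1$ or $k = d$. If $k \leq d-1$, set $a(\delta) := (a_1, \ldots, a_k - (d+1-k)\delta, a_{k+1} + \delta, \ldots, a_{d+1} + \delta)$ and consider $g(\delta) := f_d(a(\delta)) / \Pi_{i=1}^{k-1} a_i = (a_k - (d+1-k)\delta)(a_{d+1}+\delta)^{d-k}$ (note the exponent is $d-k$, not $d+1-k$, because $f_d$ omits the last factor). By Lemma~\ref{eq}(2) and the standing assumptions, $\ps(a)_l$ is strict for $k \leq l \leq d$, so $a(\delta) \in X(d,q)$ for $\delta$ near $0$; since $g$ attains a local minimum at $\delta = 0$, its derivative there vanishes, giving $a_k = a_{d+1}$ (using $d - k \geq 1$), a contradiction. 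The remaining possibility $k = d$ must be excluded: if $k = d$ then $a_d > a_{d+1}$ and $\ps(a)_d$ strict; in that case I would perturb only $a_d$ and $a_{d+1}$, i.e. set $a(\delta) := (a_1, \ldots, a_{d-1}, a_d - \delta, a_{d+1} + \delta)$, which strictly decreases $f_d = \Pi_{i=1}^d x_i$ for $\delta > 0$ while preserving all constraints for small $\delta > 0$ (the left side of $\ps(a)_t$ for $t < d$ is unchanged and the right side increases or stays equal; $\ps(a)_d$ is strict by assumption), contradicting minimality of $a$. Hence $k \neq d$, so $k \in \{0, \ldots, d-1\}$, and $\ps(a)_k$ is an equality.

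The main obstacle, and the place where the proof genuinely diverges from that of Lemma~\ref{d+1prod}, is precisely this last point: because $f_d$ does not involve $x_{d+1}$, the objective is insensitive to moving mass into $x_{d+1}$, so one needs the separate argument above to rule out $k = d$ and, more subtly, to see that when $k \leq d-1$ the relevant derivative computation still forces a contradiction (the exponent drop from $d+1-k$ to $d-k$ means one must check $d - k \geq 1$, which is exactly the range $k \leq d-1$ we are trying to establish, so the logic has to be arranged carefully — first rule out $k=d$, then run the derivative argument). I also need to double-check that in the block-perturbation claims the case where the block reaches down to position $d+1$ (i.e. $i_2 = d+1$ or $l+1 = \ldots$ touching the constant tail) is handled, but since those claims only assert statements for indices $\leq k-1 \leq d-1$ this does not actually arise. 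Once $k \in \{0,\ldots,d-1\}$ is secured together with equalities in $\ps(a)_1, \ldots, \ps(a)_k$, conclusions (a) and (b) are immediate from \eqref{k} and the established equalities.
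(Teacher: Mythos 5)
There is a genuine gap at the decisive step. For $k\leq d-1$ with $\ps(a)_k$ strict you set $g(\delta)=(a_k-(d+1-k)\delta)(a_{d+1}+\delta)^{d-k}$ and claim that $g'(0)=0$ forces $a_k=a_{d+1}$. It does not: computing,
$$g'(0)=a_{d+1}^{d-k-1}\bigl((d-k)a_k-(d+1-k)a_{d+1}\bigr),$$
so the first-order condition only yields $(d-k)a_k=(d+1-k)a_{d+1}$, which is perfectly compatible with $a_k>a_{d+1}$ (e.g.\ $a_k=2a_{d+1}$ when $k=d-1$). The cancellation that made the first derivative suffice in Lemma~\ref{d+1prod} happens precisely because there the exponent $d+1-k$ matches the coefficient $d+1-k$; once the exponent drops to $d-k$ (the very point you flagged as the place the proofs diverge) the first-order argument no longer closes. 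The paper's proof goes one step further: using $(d-k)a_k=(d+1-k)a_{d+1}$ it computes the second derivative and finds $h''(0)=-(d+1-k)^2a_{d+1}^{d-1-k}<0$, so $\delta=0$ would be a local \emph{maximum} of $h$ along the admissible perturbation, contradicting minimality of $a$. Without this second-order step your contradiction evaporates, and this is exactly the new ingredient this lemma requires beyond Lemma~\ref{d+1prod}.

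A secondary, easily repaired point: you exclude $k=d$ only inside the case ``$\ps(a)_k$ strict,'' so the configuration $a_d>a_{d+1}$ with equality in $\ps(a)_d$ is not formally ruled out by your argument as arranged. The paper instead proves $a_d=a_{d+1}$ unconditionally at the outset, by the same transfer of $\delta$ from $a_d$ to $a_{d+1}$ that you describe; note that this perturbation preserves $\ps(a)_d$ whether or not it is strict (its left side decreases and its right side increases), so your own move already gives the unconditional statement if you run it before, rather than inside, the case analysis.
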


\begin{proof}
    First we claim that $a_{d}=a_{d+1}$ always holds. Suppose $a_{d}>a_{d+1}$. Then one can take a small $\delta>0$ such that $a_{d}-\delta>a_{d+1}+\delta$. Set $a^*=(a_1, \cdots, a_{d-1}, a_{d}-\delta, a_{d+1}+\delta)$. We have $a^*\in X(d, q)$ and $f_{d}(a^*)<f_{d}(a)$, contradicting the choice of $a$.

    We can proceed the proof as in the proof of Lemma~\ref{d+1prod} to deduce that there exists an integer $k\in\{0, 1, \cdots, d-1\}$ such that $$a_1>\cdots>a_k>a_{k+1}=\cdots=a_{d+1}$$ and $\text{the equality in } \ps(a)_l \text{ holds for } 1\leq l\leq k-1.$
    So it suffices to prove that the equality in $\ps(a)_k$ holds. Suppose to the contrary that $\ps(a)_k$ is strict. For a real number $\delta$, set $$a(\delta):=(a_1, \cdots, a_k-(d+1-k)\delta, a_{k+1}+\delta, \cdots, a_{d+1}+\delta)$$ and $$h(\delta):=\frac{f_{d}(a(\delta))}{\Pi_{i=1}^{k-1}a_i}=(a_{d+1}+\delta)^{d-k}\cdot(a_k-(d+1-k)\delta).$$ Arguing similarly as before, we can have $a(\delta)\in X(d, q)$ as $\delta$ varies in a small open neighbourhood of $0$. Consider the derivative of $h(\delta)$:
    $$h'(\delta)=(d-k)(a_{d+1}+\delta)^{d-1-k}(a_k-(d+1-k)\delta)-(d+1-k)(a_{d+1}+\delta)^{d-k}.$$
    By construction $h(\delta)$ has a local minimal value at $\delta=0$, we must have $h'(0)=0$, which implies $a_{d+1}(d+1-k)=a_k(d-k)$. Consider the double derivative value of $h(\delta)$ at $\delta=0$:
    $$ h''(0)=(d-k)(d-1-k)a_{d+1}^{d-2-k}a_k-2(d-k)(d+1-k)a_{d+1}^{d-1-k}.
$$
Since $a_{d+1}(d+1-k)=a_k(d-k)$, we have $h''(0)=-(d+1-k)^2\cdot a_{d+1}^{d-1-k}<0$, and hence $h(\delta)$ has a local maximal value at $\delta=0$, a contradiction.  
\end{proof}

As indicated by Lemma~\ref{d+1prod} (resp. Lemma~\ref{dprod}) and Lemma~\ref{eq} $(3)$, in order to find the lower bound of $\Pi_{i=1}^{d+1}\beta_i$ (resp. $\Pi_{i=1}^{d}\beta_i$), we need to consider the $(d+1)$-tuple
$$y[q](l):=(\frac{q}{1+u_{1, q}}, \cdots, \frac{q}{1+u_{l-1, q}}, \frac{q}{(d+2-l)u_{l, q}}, \cdots, \frac{q}{(d+2-l)u_{l, q}})$$ 
for each $l\in\{1, \cdots, d+1\}$. Note that by definition, we have  
\begin{align*}
         y[q](1)&{}=(\frac{1}{d+1}, \cdots, \frac{1}{d+1}),\\
         y[q](d)&{}=(\frac{q}{1+u_{1, q}}, \cdots, \frac{q}{1+u_{d-1, q}}, \frac{q}{2u_{d, q}},  \frac{q}{2u_{d, q}}), \\
         y[q](d+1)&{}=(\frac{q}{1+u_{1, q}}, \cdots, \frac{q}{1+u_{d, q}}, \frac{q}{u_{d+1, q}}).
     \end{align*}
It is easy to see that for each $l\in\{1, \cdots, d+1\}$, $y[q](l)\in X(d, q)$. 

\begin{thm}\label{lower bd for bc d+1}
Fix two integers $d\geq2$ and $q\geq1$. Let $S$ be a $d$-dimensional $\frac{1}{q}$-lc Fano simplex. Suppose $\beta_1\geq\beta_2\geq...\geq\beta_{d+1}$ are the barycentric coordinates of $\0\in \inte(S)$ with respect to $S$. Let $f_{d+1}: \bR^{d+1}\longrightarrow \bR$ be the continuous function defined by sending $x=(x_1, \cdots, x_{d+1})$ to $f_{d+1}(x)=\Pi_{i=1}^{d+1}x_i$. Then $$\Pi_{i=1}^{d+1}\beta_i\geq f_{d+1}(y[q](d+1))=\frac{q^{d+2}}{u_{d+1, q}^2},$$
and the equality holds if and only if $$(\beta_1, \cdots, \beta_{d+1})=(\frac{q}{1+u_{1, q}}, \cdots, \frac{q}{1+u_{d, q}}, \frac{q}{u_{d+1, q}}).$$
\end{thm}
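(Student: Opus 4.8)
The plan is to reduce the bound to a finite comparison among the distinguished tuples $y[q](l)$, $l=1,\dots,d+1$. By Theorem~\ref{PS ineq} the barycentric coordinates satisfy $(\beta_1,\dots,\beta_{d+1})\in X(d,q)$, so it suffices to prove $f_{d+1}(x)\ge \frac{q^{d+2}}{u_{d+1,q}^2}$ for all $x\in X(d,q)$, with equality only at $x=y[q](d+1)$. The set $X(d,q)$ is a compact subset of $\bR^{d+1}$ and $f_{d+1}$ is continuous, so a minimizer exists; let $b$ be any minimizer. Lemma~\ref{d+1prod} gives an integer $k\in\{0,\dots,d\}$ with $b_{k+1}=\cdots=b_{d+1}$ and equality in $\ps(b)_l$ for $1\le l\le k$; by Lemma~\ref{eq}~(3) this forces $b_t=\frac{q}{1+u_{t,q}}$ for $1\le t\le k$, and then \eqref{relation1} together with \eqref{sum1} (applied with $p=k+1$) pins down the common tail value $b_{k+1}=\cdots=b_{d+1}=\frac{q}{(d+1-k)u_{k+1,q}}$. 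Thus $b=y[q](k+1)$; in other words, every minimizer of $f_{d+1}$ on $X(d,q)$ belongs to the finite list $y[q](1),\dots,y[q](d+1)$.

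It then remains to identify which $y[q](l)$ gives the smallest value. Using \eqref{product} to simplify $\prod_{i=1}^{l-1}\frac{q}{1+u_{i,q}}=\frac{q^l}{u_{l,q}}$, I would compute
\[
 f_{d+1}(y[q](l))=\frac{q^{d+2}}{(d+2-l)^{d+2-l}\,u_{l,q}^{\,d+3-l}},\qquad 1\le l\le d+1,
\]
so in particular $f_{d+1}(y[q](d+1))=\frac{q^{d+2}}{u_{d+1,q}^2}$. The claim will follow once we show $l\mapsto f_{d+1}(y[q](l))$ is strictly decreasing. Setting $n:=d+1-l\ge 1$ and using $u_{l+1,q}=u_{l,q}(u_{l,q}+1)$, a short rearrangement shows that $f_{d+1}(y[q](l+1))<f_{d+1}(y[q](l))$ is equivalent to $(n+1)^{n+1}u_{l,q}<n^n(u_{l,q}+1)^{n+1}$.

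This last inequality is the one substantive computation, and it reduces to an elementary single-variable estimate: the function $g(u)=\frac{(u+1)^{n+1}}{u}$ on $(0,\infty)$ satisfies $g'(u)=\frac{(u+1)^n(nu-1)}{u^2}$, hence has a strict global minimum at $u=1/n$ with value $\frac{(n+1)^{n+1}}{n^n}$. Since $u_{l,q}\ge 1$, and moreover $u_{l,q}>1$ whenever $n=1$ (for then $l=d\ge 2$, so $u_{l,q}\ge u_{2,1}=2$), we always have $u_{l,q}\ne 1/n$, and therefore $g(u_{l,q})>\frac{(n+1)^{n+1}}{n^n}$, which is exactly the strict inequality needed. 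Consequently the minimum of $f_{d+1}$ over $X(d,q)$ equals $\frac{q^{d+2}}{u_{d+1,q}^2}$ and is attained only at $y[q](d+1)$; specializing to $(\beta_1,\dots,\beta_{d+1})$ yields both the asserted bound and the equality case. I expect the only step that is not pure bookkeeping to be this monotonicity inequality, which, as explained, collapses to the minimization of $g$; everything else is an application of Lemmas~\ref{d+1prod} and \ref{eq} together with the identities \eqref{sum1} and \eqref{product}.
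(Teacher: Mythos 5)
Your proposal is correct, and its skeleton is the same as the paper's: reduce via Theorem~\ref{PS ineq}, Lemma~\ref{d+1prod} and Lemma~\ref{eq}~(3) to comparing the finitely many values $f_{d+1}(y[q](l))$, $l=1,\dots,d+1$. Where you genuinely diverge is in how that comparison is settled. The paper treats $d=2$ by direct inspection, quotes \cite[Theorem~4.3~(c)]{AKN16} for $q=1$, and for $q\geq2$ proves $f_{d+1}(y[q](l))>f_{d+1}(y[q](l+1))$ only for $2\leq l\leq d$ via the estimate $(1+\tfrac{1}{d+1-l})^{d+1-l}<e$ together with $u_{l,q}\geq q(1+q)$ (which forces $l\geq2$ and $q\geq2$), and then handles $l=1$ by a separate AM--GM comparison of $y[q](1)$ directly with $y[q](d+1)$. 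You instead derive the closed form $f_{d+1}(y[q](l))=\frac{q^{d+2}}{(d+2-l)^{d+2-l}u_{l,q}^{d+3-l}}$ and reduce each consecutive comparison, with $n=d+1-l$, to $(n+1)^{n+1}u_{l,q}<n^n(u_{l,q}+1)^{n+1}$, which follows from the fact that $g(u)=(u+1)^{n+1}/u$ has its strict minimum $\frac{(n+1)^{n+1}}{n^n}$ at $u=1/n$ and $u_{l,q}>1/n$ in all relevant cases (your check that $u_{l,q}\geq u_{2,q}\geq 2$ when $n=1$ is the right one, since $l=d\geq2$ there). This buys full strict monotonicity of $l\mapsto f_{d+1}(y[q](l))$ uniformly for all $d\geq2$ and $q\geq1$, eliminating the case split, the external citation for $q=1$, and the separate $l=1$ step; the paper's route avoids the explicit closed form but at the cost of cruder, case-dependent estimates. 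Two harmless bookkeeping points you should make explicit if you write this up: \eqref{product} and \eqref{sum1} are stated in the paper only for $p\geq2$, so the instances $p=1$ (equivalently $k=0$, where the tuple is $y[q](1)=(\frac{1}{d+1},\dots,\frac{1}{d+1})$) should be noted as trivially valid since $u_{1,q}=q$; and the equality case needs the observation that $y[q](d+1)$ is already sorted decreasingly, so it is the unique minimizer compatible with the ordering of the $\beta_i$.
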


\begin{proof}

    According to Lemma~\ref{d+1prod} and Lemma~\ref{eq} $(3)$, we have $$\min\{f_{d+1}(x)| x\in X(d, q)\}=\min\{f_{d+1}(y[q](l))| l\in\{1, \cdots, d+1\}\}.$$
    So it suffices to show that 
    \begin{align}
        f_{d+1}(y[q](d+1))<f_{d+1}(y[q](l)),\forall l\in\{1, \cdots, d\}.\label{e1}
    \end{align}
    If $d=2$, then we have    
    \begin{align*}
         y[q](1)&{}=(\frac{1}{3}, \frac{1}{3}, \frac{1}{3}),\\
         y[q](2)&{}=(\frac{q}{1+q}, \frac{1}{2(1+q)}, \frac{1}{2(1+q)}), \\
         y[q](3)&{}=(\frac{q}{1+q}, \frac{q}{1+q+q^2}, \frac{q}{(q+q^2)(1+q+q^2)}).
     \end{align*}
    One can check directly that \eqref{e1} holds.
    So we may assume that $d\geq3$.

    In case $q=1$, \eqref{e1} follows from \cite[Theorem~4.3 $(c)$]{AKN16}.
    So in the next discussion, we always assume that $q\geq2$.

    We claim that for $2\leq l\leq d$, $f_{d+1}(y[q](l))>f_{d+1}(y[q](l+1))$.

    By definition, $f_{d+1}(y[q](l))>f_{d+1}(y[q](l+1))$ is equivalent to 
    \begin{align}
        &{}(\Pi_{i=1}^{l-1}\frac{q}{1+u_{i, q}})\cdot(\frac{q}{(d+2-l)u_{l, q}})^{d+2-l}\notag \\
        &{}>(\Pi_{i=1}^{l}\frac{q}{1+u_{i, q}})\cdot(\frac{q}{(d+1-l)u_{l+1, q}})^{d+1-l}.\label{ineq1}
    \end{align}
    Since $u_{l+1, q}=u_{l, q}(u_{l, q}+1)$, \eqref{product} holds with $p=l$ and $p=l+1$, \eqref{ineq1} is equivalent to
    $$(d+2-l)^{d+2-l}\cdot u_{l, q}<(d+1-l)^{d+1-l}\cdot (1+u_{l, q})^{d+2-l}.$$
    Since $(1+\frac{1}{d+1-l})^{d+1-l}<e$, where $e$ is the Euler number, the above inequality holds if 
    \begin{align}
        (d+2-l)e<(1+u_{l, q})^{d+1-l}.\label{ineq2}
    \end{align}
    Consider the function $g(t):=(1+u_{l, q})^{t+1}-e(t+2), t\in [0, +\infty)$. Then $g'(t)=(1+u_{l, q})^{t+1}\ln (1+u_{l, q})-e$. We have $g'(t)>0, \forall t\in [0, +\infty)$ by noting that $u_{l, q}\geq q(1+q)\geq2$. Hence $g(d-l)\geq g(0)=(1+u_{l, q})-2e\geq 1+q(1+q)-2e>0.$ Then \eqref{ineq2} holds for $2\leq l\leq d$, which in turn implies \eqref{ineq1} holds.

    Finally, by the inequality for the geometric and arithmetic means, we have $f_{d+1}(y[q](1))=(\frac{1}{d+1})^{d+1}>f_{d+1}(y[q](d+1))$. So for all $1\leq l\leq d$, $f_{d+1}(y[q](l))>f_{d+1}(y[q](d+1))$.
\end{proof}

\begin{thm}\label{lower bd for bc d}
Fix two integers $d\geq2$ and $q\geq1$. Let $S$ be a $d$-dimensional $\frac{1}{q}$-lc Fano simplex. Suppose $\beta_1\geq\beta_2\geq...\geq\beta_{d+1}$ are the barycentric coordinates of $\0\in \inte(S)$ with respect to $S$. Let $f_{d}: \bR^{d+1}\longrightarrow \bR$ be the continuous function defined by sending $x=(x_1, \cdots, x_{d+1})$ to $f_{d}(x)=\Pi_{i=1}^{d}x_i$. Then $$\Pi_{i=1}^{d}\beta_i\geq f_{d}(y[q](d))=\frac{q^{d+1}}{2u_{d, q}^2},$$
except for the case $d=2, q=1$. In the exceptional case $d=2, q=1$, $\Pi_{i=1}^{d}\beta_i\geq f_{2}(y[q](1))=\frac{1}{9}.$ Furthermore, for $d=3, q=1$ the equality holds if and only if $(\beta_1, \beta_2, \beta_3, \beta_{4})=(\frac{1}{2}, \frac{1}{6}, \frac{1}{6}, \frac{1}{6})$ or $(\frac{1}{2}, \frac{1}{3}, \frac{1}{12}, \frac{1}{12})$; for $d=3, q\geq2$ or $d\geq4$, the equality holds if and only if $$(\beta_1, \cdots, \beta_{d+1})=(\frac{q}{1+u_{1, q}}, \cdots, \frac{q}{1+u_{d-1, q}}, \frac{q}{2u_{d, q}}, \frac{q}{2u_{d, q}}).$$
\end{thm}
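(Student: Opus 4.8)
The plan is to follow the scheme used in the proof of Theorem~\ref{lower bd for bc d+1}. By Lemma~\ref{dprod} and Lemma~\ref{eq}~$(3)$ (the same reduction recalled just before Theorem~\ref{lower bd for bc d+1}), every minimizer of $f_d$ on $X(d,q)$ is one of the tuples $y[q](l)$ with $l\in\{1,\dots,d\}$; the index $l=d+1$ cannot occur here, since the integer $k$ supplied by Lemma~\ref{dprod} only ranges over $\{0,\dots,d-1\}$. Hence $\min\{f_d(x)\mid x\in X(d,q)\}=\min\{f_d(y[q](l))\mid 1\le l\le d\}$, and since $(\beta_1,\dots,\beta_{d+1})\in X(d,q)$ by Theorem~\ref{PS ineq}, it remains to identify this finite minimum together with the indices attaining it. A direct evaluation of $f_d(y[q](d))$, using \eqref{product} with $p=d$, gives $f_d(y[q](d))=q^{d+1}/(2u_{d,q}^2)$, the value claimed in the theorem.

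The core step is to compare consecutive tuples. Using only the recursion $u_{l+1,q}=u_{l,q}(1+u_{l,q})$, a short computation shows that for $1\le l\le d-1$
$$\frac{f_d(y[q](l))}{f_d(y[q](l+1))}=\frac{(d+1-l)^{d-l}(1+u_{l,q})^{d-l+1}}{(d+2-l)^{d-l+1}u_{l,q}},$$
so $f_d(y[q](l))>f_d(y[q](l+1))$ is equivalent to $(d+1-l)^{d-l}(1+u_{l,q})^{d-l+1}>(d+2-l)^{d-l+1}u_{l,q}$, and, bounding $\bigl(1+\tfrac1{d+1-l}\bigr)^{d-l}<e$, it is enough to prove $(1+u_{l,q})^{d-l+1}>(d+2-l)\,e\,u_{l,q}$. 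For $q\ge2$ and $d\ge3$ I would verify this by cases. For $l=1$ it reads $(1+q)^d/q>(d+1)e$; since $(1+q)^d/q$ is increasing in $q$ for $d\ge3$, it suffices to check $3^d/2>(d+1)e$, which holds for all $d\ge3$. For $l\ge2$ one has $u_{l,q}\ge u_{2,q}=q(q+1)\ge6$; then when $d-l+1=2$ the inequality follows from $\tfrac{(1+u_{l,q})^2}{u_{l,q}}=u_{l,q}+2+\tfrac1{u_{l,q}}\ge\tfrac{49}{6}>3e$, and when $d-l+1\ge3$, using $1+u_{l,q}>u_{l,q}$, it follows from $(1+u_{l,q})^{d-l}\ge7^{d-l}>(d+2-l)e$. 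Chaining these, $f_d(y[q](1))>f_d(y[q](2))>\cdots>f_d(y[q](d))$, so on $X(d,q)$ the minimum of $f_d$ is $f_d(y[q](d))=q^{d+1}/(2u_{d,q}^2)$ and is attained only at $y[q](d)$; together with the reduction above this gives the inequality and the equality characterization for $d\ge3$, $q\ge2$.

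It remains to deal with the exceptional cases directly. For $q=1$ and $d\ge3$, both the inequality and the equality statement --- in particular, for $d=3$, the two-element equality set $\{y[1](2),y[1](3)\}=\bigl\{(\tfrac12,\tfrac16,\tfrac16,\tfrac16),(\tfrac12,\tfrac13,\tfrac1{12},\tfrac1{12})\bigr\}$, which appears because the two values $f_3(y[1](2))$ and $f_3(y[1](3))$ coincide --- follow from \cite[Theorem~4.3]{AKN16} applied to $f_d$, exactly as the $q=1$ part of the proof of Theorem~\ref{lower bd for bc d+1} did. For $d=2$ only the tuples $y[q](1)$ and $y[q](2)$ occur: $f_2(y[q](1))=\tfrac19$ and $f_2(y[q](2))=\tfrac{q}{2(1+q)^2}=\tfrac{q^3}{2u_{2,q}^2}$, and $\tfrac19\ge\tfrac{q}{2(1+q)^2}$ is equivalent to $2q^2-5q+2\ge0$, which holds for $q\ge2$ (with equality exactly at $q=2$, so both tuples minimize there) and fails for $q=1$, leaving the exceptional bound $f_2(y[1](1))=\tfrac19$.

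I expect the main obstacle to be the case analysis of the second paragraph. In Theorem~\ref{lower bd for bc d+1} the corresponding inequality was the clean $(1+u_{l,q})^{d+1-l}>(d+2-l)e$, whereas here the extra factor $u_{l,q}$ on the right makes the estimate tight at the smallest admissible parameters: the subcase $d-l+1=2$ genuinely needs the bound $u_{l,q}\ge6$ (the numerical inequality $\tfrac{49}{6}>3e$ has very little slack), and the slot $l=1$, where $u_{1,q}=q$ may be as small as $2$, must be handled by monotonicity in $q$ rather than by a single numerical estimate. Keeping track of which $y[q](l)$ are simultaneously minimal in the low-dimensional cases $d=2$ and $d=3,\,q=1$, where the minimum is attained more than once, is the other delicate point.
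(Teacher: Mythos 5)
Your proposal is correct and follows essentially the same route as the paper: reduce via Lemma~\ref{dprod} and Lemma~\ref{eq}~(3) to the finitely many candidates $y[q](l)$, $1\le l\le d$, treat $d=2$ and $q=1$ separately (citing \cite{AKN16} for $q=1$, $d\ge3$), and for $d\ge3$, $q\ge2$ show $y[q](d)$ is the strict minimum by consecutive comparisons with the bound $(1+\tfrac{1}{d+1-l})^{d-l}<e$. The only deviation is minor: you extend the pairwise chain down to $l=1$ (checking $(1+q)^d/q>(d+1)e$ by monotonicity in $q$), whereas the paper stops the chain at $l=2$ and instead compares $f_d(y[q](1))$ with $f_d(y[q](d))$ directly via the inequality $(d+1)^d q^{d+1}<2u_{d,q}^2$ proved by induction on $d$; both yield the same strict-minimum conclusion and hence the equality characterization.
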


\begin{proof}
According to Lemma~\ref{dprod} and Lemma~\ref{eq} $(3)$, we have $$\min\{f_{d}(x)| x\in X(d, q)\}=\min\{f_d(y[q](l))| l\in\{1, \cdots, d\}\}.$$

If $d=2$, then we have $f_{2}(y[q](1))=\frac{1}{9}$ and $f_{2}(y[q](2))=\frac{q}{2(1+q)^2}$. So In case $q=1$, we have $\min\{f_{2}(x)| x\in X(2, q)\}=\frac{1}{9}.$ And for $q\geq 2$, we have $\min\{f_{2}(x)| x\in X(2, q)\}=f_{2}(y[q](2)).$ 

In case $q=1$, the conclusions for $d\geq3$ follow from \cite[Theorem~4.3 $(e)$]{AKN16}.
So in the following, we assume that $d\geq3$ and $q\geq2$.

We claim that for $2\leq l\leq d-1$, $f_{d}(y[q](l))>f_{d}(y[q](l+1))$.

    By definition, $f_{d}(y[q](l))>f_{d}(y[q](l+1))$ is equivalent to 
    \begin{align}
        &{}(\Pi_{i=1}^{l-1}\frac{q}{1+u_{i, q}})\cdot(\frac{q}{(d+2-l)u_{l, q}})^{d+1-l}\notag \\
        &{}>(\Pi_{i=1}^{l}\frac{q}{1+u_{i, q}})\cdot(\frac{q}{(d+1-l)u_{l+1, q}})^{d-l}.\label{ineq1+}
    \end{align}
    Since $u_{l+1, q}=u_{l, q}(u_{l, q}+1)$, \eqref{product} holds with $p=l$ and $p=l+1$, \eqref{ineq1+} is equivalent to
    $$(d+2-l)^{d+1-l}\cdot u_{l, q}<(d+1-l)^{d-l}\cdot (1+u_{l, q})^{d+1-l}.$$
    Since $(1+\frac{1}{d+1-l})^{d+1-l}<e$, the above inequality holds if 
    \begin{align}
        (d+1-l)e<(1+u_{l, q})^{d-l}.\label{ineq2+}
    \end{align}
    Consider the function $g(t):=(1+u_{l, q})^{t+1}-e(t+2), t\in [0, +\infty)$. Then by the same argument as in the proof of Theorem~\ref{lower bd for bc d+1}, we have $g(d-1-l)\geq g(0)=(1+u_{l, q})-2e\geq 1+q(1+q)-2e>0.$ Then \eqref{ineq2+} holds for $2\leq l\leq d-1$, which in turn implies \eqref{ineq1+} holds.

    It remains to prove that $f_{d}(y[q](1))>f_{d}(y[q](d))$ for $d\geq3$ and $q\geq2$. $f_{d}(y[q](1))>f_{d}(y[q](d))$ is equivalent to
    \begin{align}
        (d+1)^d\cdot q^{d+1}<2u_{d, q}^2.\label{ineq3}
    \end{align}
   If $d=3$, then \eqref{ineq3} holds by noting that $u_{3, q}^2>u_{2, q}^4=q^4(1+q)^4$. We proceed by induction. Assume \eqref{ineq3} holds for some $d\geq3$, we need to prove \eqref{ineq3} holds when we change $d$ to $d+1$, i.e., to show the following inequality holds:
     \begin{align}
        (d+2)^{d+1}\cdot q^{d+2}<2u_{d+1, q}^2.\label{ineq3+}
    \end{align}
    Since $(d+1)^d\cdot q^{d+1}<2u_{d, q}^2$ and $u_{d+1, q}=u_{d, q}(1+u_{d, q})$, \eqref{ineq3+} holds if $$(d+2)^{d+1}q<(d+1)^d(1+u_{d, q})^2.$$ 
    Since $(1+\frac{1}{d+1})^{d+1}<e$, the above inequality holds if $eq(d+1)<(1+u_{d, q})^2$. This is easy to see by noting that $u_{d, q}>q^{2^{d-1}}.$ 
\end{proof}

\section{Bounding the dual normalized volumes of Fano simplices}
Fix two integers $d\geq2$ and $q\geq1$. In this section, we give the upper bound for the dual normalized volumes of $d$-dimensional $\frac{1}{q}$-lc Fano simplices, which corresponds to the upper bound for the 
anti-canonical volumes of $d$-dimensional $\frac{1}{q}$-lc toric Fano varieties with Picard number one. We construct examples to show that the upper bounds are optimal in each dimension $d\geq3$.

\begin{thm}\label{bd for the dual vol of s}
 Fix two integers $d\geq2$ and $q\geq1$. Then for any $d$-dimensional $\frac{1}{q}$-lc Fano simplex $S\subset\bR^d$, we have
\begin{enumerate}
  \item [(a)] if $d=2$, then $\Vol(S^*)\leq9$ provided $q=1$, and $\Vol(S^*)\leq\frac{2u_{2, q}^{2}}{q^3}$ provided $q\geq2$;
  \item [(b)] if $d\geq3$, then $\Vol(S^*)\leq\frac{2u_{d, q}^{2}}{q^{d+1}}$, and
  \item [(c)] for $q\geq2$, the equality holds if and only if  $$S^*\simeq\conv(\frac{1+u_{1, q}}{q}e_{1}, \cdots, \frac{1+u_{d-1, q}}{q}e_{d-1}, \mp\frac{u_{d, q}}{q}e_{d}),$$
  where $e_{i}$ are the standard basis of $\bR^d$.
\end{enumerate}
\end{thm}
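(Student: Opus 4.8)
The plan is to read off parts (a) and (b) directly from Corollary~\ref{dual simplex vol bd} together with Theorem~\ref{lower bd for bc d}, and to deduce the equality statement (c) (which I understand as concerning $d\ge3$, $q\ge2$) from the equality cases of those two ingredients. Let $\beta_1\ge\beta_2\ge\cdots\ge\beta_{d+1}$ be the barycentric coordinates of $\0$ with respect to $S$; these coincide with the barycentric coordinates of $\0$ with respect to $S^*$. By Corollary~\ref{dual simplex vol bd}, $\Vol(S^*)=d!\,\vol_\ZZ(S^*)\le 1/(\beta_1\cdots\beta_d)$; by Theorem~\ref{lower bd for bc d}, $\beta_1\cdots\beta_d\ge q^{d+1}/(2u_{d,q}^2)$ when $d\ge3$, and $\ge 1/9$ (resp.\ $\ge q^3/(2u_{2,q}^2)$) when $d=2$, $q=1$ (resp.\ $d=2$, $q\ge2$). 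Chaining the two inequalities proves (a) and (b).

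For the ``if'' direction of (c), write $P:=\conv\bigl(\tfrac{1+u_{1,q}}{q}e_1,\dots,\tfrac{1+u_{d-1,q}}{q}e_{d-1},\tfrac{u_{d,q}}{q}e_d,-\tfrac{u_{d,q}}{q}e_d\bigr)$. Since $\Vol$ is a unimodular invariant it is enough to compute $\Vol(P)$. Using the edge vectors of $P$ based at the vertex $-\tfrac{u_{d,q}}{q}e_d$, the associated $d\times d$ matrix is lower triangular with diagonal entries $\tfrac{1+u_{1,q}}{q},\dots,\tfrac{1+u_{d-1,q}}{q},\tfrac{2u_{d,q}}{q}$, so $\vol_\ZZ(P)=\tfrac{1}{d!}\bigl(\prod_{i=1}^{d-1}\tfrac{1+u_{i,q}}{q}\bigr)\tfrac{2u_{d,q}}{q}$. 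By \eqref{product} with $p=d$, one has $\prod_{i=1}^{d-1}(1+u_{i,q})=u_{d,q}/q$, so this equals $\tfrac{1}{d!}\cdot\tfrac{2u_{d,q}^2}{q^{d+1}}$, whence $\Vol(P)=\tfrac{2u_{d,q}^2}{q^{d+1}}$; then $S^*\simeq P$ forces $\Vol(S^*)=\tfrac{2u_{d,q}^2}{q^{d+1}}$. (That $P^*$ is a genuine $\tfrac1q$-lc Fano simplex, so that such $S$ exist, is the content of Example~\ref{ex1}.)

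For the ``only if'' direction, suppose $d\ge3$, $q\ge2$, and $\Vol(S^*)=\tfrac{2u_{d,q}^2}{q^{d+1}}$. Then the two inequalities used for (b) are both equalities. Equality in Theorem~\ref{lower bd for bc d}, whose minimizer is unique for $d\ge3$, $q\ge2$, forces $(\beta_1,\dots,\beta_{d+1})=\bigl(\tfrac{q}{1+u_{1,q}},\dots,\tfrac{q}{1+u_{d-1,q}},\tfrac{q}{2u_{d,q}},\tfrac{q}{2u_{d,q}}\bigr)$; equivalently, the reduced weights of the associated fake weighted projective space are $\tfrac{2u_{d,q}}{1+u_{1,q}},\dots,\tfrac{2u_{d,q}}{1+u_{d-1,q}},1,1$, with vertex sum $h=2u_{d,q}/q$. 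Equality in Corollary~\ref{dual simplex vol bd} means that Pikhurko's bound (Theorem~\ref{simplex vol bd}) is attained by $S^*$, whose unique interior lattice point is $\0$. It then remains to upgrade these two facts into a unimodular normal form for $S^*$: the plan is to revisit the proof of Theorem~\ref{simplex vol bd} and record which auxiliary inequalities there must also be equalities in the present situation, concluding that the vertices of $S$ can be taken to be $e_1,\dots,e_d$ together with $-\sum_{i=1}^{d-1}\tfrac{2u_{d,q}}{1+u_{i,q}}e_i-e_d$ (so $S$ carries no nontrivial quotient, i.e.\ it is the Fano polytope of $\bP(\tfrac{2u_{d,q}}{1+u_{1,q}},\dots,\tfrac{2u_{d,q}}{1+u_{d-1,q}},1,1)$); dualizing and applying a shear followed by a lattice translation then identifies $S^*$ with $P$, using once more $\prod_{i=1}^{d-1}(1+u_{i,q})=u_{d,q}/q$ and the divisibilities $(1+u_{i,q})\mid u_{d,q}$.

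I expect this last step --- passing from ``Pikhurko's bound is attained, with prescribed barycentric coordinates'' to an explicit normal form for $S^*$ --- to be the main obstacle. There are two natural routes: isolate it as a lemma and generalize the corresponding argument of \cite{AKN16} from $q=1$ to arbitrary $q$; or argue directly, using $\vol_\ZZ(S^*)=\tfrac{1}{d!\,\beta_{d+1}}\lvert\det(w_1,\dots,w_d)\rvert$ for the vertices $w_1,\dots,w_{d+1}$ of $S^*$ (normalized by $\0=\sum_i\beta_i w_i$), so that Pikhurko's equality pins $\lvert\det(w_1,\dots,w_d)\rvert$ to $u_{d,q}/q^d$, and then pinning down the $w_i$ from the analogous constraints on the remaining facets. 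Either way the problem reduces to a finite computation of a unimodular normal form.
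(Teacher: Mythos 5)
Parts (a) and (b) of your proposal, and the ``if'' direction of (c), are correct and follow the paper's own route: Corollary~\ref{dual simplex vol bd} chained with Theorem~\ref{lower bd for bc d}, plus a direct determinant computation (using \eqref{product}) for the volume of the explicit dual simplex. The reduction of the ``only if'' direction to the statement that the barycentric coordinates of $\0$ must equal $(\frac{q}{1+u_{1,q}},\cdots,\frac{q}{1+u_{d-1,q}},\frac{q}{2u_{d,q}},\frac{q}{2u_{d,q}})$ is also exactly what the paper does.

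However, there is a genuine gap: the heart of (c) --- passing from these prescribed barycentric coordinates to the unimodular normal form of $S$ (equivalently, excluding fake quotients and excluding the possibility $h>1$) --- is not proved in your proposal; you yourself flag it as ``the main obstacle'' and only offer two unexecuted strategies. Your first strategy, analyzing the equality case of Pikhurko's bound (Theorem~\ref{simplex vol bd}), is not developed anywhere and is not what the paper uses; equality of volumes alone does not obviously control the lattice-theoretic data (the index of the sublattice generated by the rescaled vertices), which is precisely what must be pinned down. The paper instead proves two dedicated lemmas: Lemma~\ref{unique1}, a sublattice-index argument (counting lattice points in a half-open parallelepiped, via \cite{Bar02}) showing that the $(d-1)$-dimensional section through the unique point of $\inte(S')\cap q\ZZ^d$ of the translated simplex $S'=S-\frac{v_d+v_{d+1}}{2}$ is unimodularly equivalent to $\conv((1+u_{1,q})e_1,\cdots,(1+u_{d-1,q})e_{d-1},\0)$, and Lemma~\ref{unique2}, which uses a shear to put $S$ in the form $\conv((1+u_{1,q})e_1,\cdots,(1+u_{d-1,q})e_{d-1},\mp(v^{(d-1)},h))$; then the identity $\Vol(S)\cdot\Vol(S^*)=\frac{1}{\Pi_{i=1}^{d+1}\beta_i}$ (from \cite{Nill07} or \cite{AKN16}) forces $\Vol(S)=\frac{2u_{d,q}}{q}$ and hence $h=1$, and finally Proposition~\ref{ex2} computes the dual of this normal form explicitly (via the two integral maps $\phi$ and $\psi$, using \eqref{sum1}, \eqref{product} and the divisibilities $(1+u_{i,q})(1+u_{j,q})\mid u_{d,q}$). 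None of this is a mere ``finite computation''; your proposal would need to supply these arguments (essentially the generalization of the $q=1$ case of \cite{AKN16} that you mention as an option) for part (c) to be complete.
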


\begin{proof}
Suppose $\beta_1, \cdots, \beta_{d+1}$ are the barycentric coordinates of $\0$ with respect to $S$, sorted decreasingly as $\beta_1\geq\cdots\geq\beta_{d+1}$. Then $\beta_1, \cdots, \beta_{d+1}$ are the barycentric coordinates of $\0$ with respect to the dual $S^*$ of $S$. By Corollary~\ref{dual simplex vol bd}, we have $$\Vol(S^*)\leq\frac{1}{\beta_1\cdots\beta_d}.$$
So the conclusions $(a)$ and $(b)$ follow from Theorem~\ref{lower bd for bc d}. The proof of conclusion $(c)$ will be presented at the end of this section.
\end{proof}

\begin{thm}\label{bd volume p1}
Fix two integers $d\geq2$ and $q\geq1$. Then for any $d$-dimensional $\frac{1}{q}$-lc toric Fano variety $X$ with Picard number one, we have
\begin{enumerate}
\item [(a)] if $d=2$, then $(-K_X)^2\leq9$ provided $q=1$, and $(-K_X)^2\leq\frac{2u_{2, q}^{2}}{q^3}$ provided $q\geq2$;
\item [(b)] if $d\geq3$, then $(-K_X)^d\leq\frac{2u_{d, q}^{2}}{q^{d+1}}$, and
\item [(c)] for $q\geq2$ the equality holds if and only if $$X\simeq\bP(\frac{2u_{d, q}}{1+u_{1, q}}, \cdots, \frac{2u_{d, q}}{1+u_{d-1, q}}, 1, 1).$$
\end{enumerate}
\end{thm}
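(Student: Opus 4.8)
The plan is to transcribe Theorem~\ref{bd for the dual vol of s} into the language of toric Fano varieties via the dictionary of Subsection~\ref{toric}. I first recall the standard fact that a complete toric variety has Picard number one precisely when its fan is simplicial with exactly $d+1$ rays, equivalently when its fan polytope $P_X$ is a $d$-simplex; in that case $X$ is an honest weighted projective space $\bP(w_1,\dots,w_{d+1})$ as soon as $\vex(P_X)$ generates $\ZZ^d$, the weights being the unique coprime positive integers with $\sum_i w_iv_i=\0$ for the primitive ray generators $v_i$. Since $\mld(X)\geq\frac1q$ makes $P_X$ a $\frac1q$-lc Fano polytope and $P_X$ is a simplex, $P_X$ is a $\frac1q$-lc Fano \emph{simplex}. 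Combining $\Vol(-K_X)=\Vol(P_X^*)$ with parts (a) and (b) of Theorem~\ref{bd for the dual vol of s} gives parts (a) and (b) at once.

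For the sufficiency in (c) I would run the computation of Example~\ref{ex1}: set $w_j=\frac{2u_{d,q}}{1+u_{j,q}}$ for $1\leq j\leq d-1$ and $w_d=w_{d+1}=1$, which are positive integers since $(1+u_{j,q})\mid u_{j+1,q}\mid u_{d,q}$ for $j\leq d-1$. Taking the model of the fan of $\bP(w_1,\dots,w_{d+1})$ with primitive ray generators $e_1,\dots,e_d,\ -\sum_{j=1}^{d-1}w_je_j-e_d$, one has $P_X=\conv(e_1,\dots,e_d,-\sum_{j=1}^{d-1}w_je_j-e_d)$, whose vertices span $\ZZ^d$. Identities \eqref{sum1} and \eqref{product} give $\sum_i w_i=\frac{2u_{d,q}}{q}$, so the barycentric coordinates of $\0$ with respect to $P_X$ are $\frac{q}{1+u_{1,q}},\dots,\frac{q}{1+u_{d-1,q}},\frac{q}{2u_{d,q}},\frac{q}{2u_{d,q}}$ — exactly the extremal tuple in Theorem~\ref{lower bd for bc d} — and $\Vol(-K_X)=\Vol(P_X^*)=\frac{(\sum_i w_i)^d}{\prod_i w_i}=\frac{2u_{d,q}^2}{q^{d+1}}$.

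For the necessity in (c), assume $(-K_X)^d=\frac{2u_{d,q}^2}{q^{d+1}}$, so that $\Vol(P_X^*)$ attains the bound of Theorem~\ref{bd for the dual vol of s} and, by part (c) there, $P_X^*$ is unimodularly equivalent to the explicit extremal simplex. Translating that simplex so that its unique interior lattice point goes to the origin, the equivalence becomes linear (origin-fixing), so dualizing recovers $P_X$ up to linear unimodular equivalence; an explicit polar-dual computation, again using \eqref{sum1} and \eqref{product}, identifies this $P_X$ with the fan polytope $\conv(e_1,\dots,e_d,-\sum_{j=1}^{d-1}w_je_j-e_d)$ of the previous paragraph, whose vertices span $\ZZ^d$. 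Hence $X\simeq\bP(\frac{2u_{d,q}}{1+u_{1,q}},\dots,\frac{2u_{d,q}}{1+u_{d-1,q}},1,1)$.

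I expect the necessity direction — the explicit polar-dual identification — to be the main obstacle. Two points need care: the normal form in Theorem~\ref{bd for the dual vol of s}~(c) is written with ``$\simeq$'' meaning \emph{affine} unimodular equivalence and is not origin-centered, so one must first translate by the (unique) interior lattice point before dualizing; and one must verify that the resulting extremal fan polytope has vertex set generating $\ZZ^d$, so that $X$ is a genuine and not merely a fake weighted projective space. Confirming that equality in the Pikhurko bound of Corollary~\ref{dual simplex vol bd} is actually attained by the extremal simplex is a minor extra check, immediate from $w_{d+1}=1$.
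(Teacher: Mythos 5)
Your proposal follows essentially the same route as the paper: reduce to the fan simplex $P_X$, get (a) and (b) from $\Vol(-K_X)=\Vol(P_X^*)$ and Theorem~\ref{bd for the dual vol of s}, and get (c) from Example~\ref{ex1} together with the equality analysis behind Theorem~\ref{bd for the dual vol of s}(c). The only difference is cosmetic: for necessity the paper invokes the identification of $S$ itself obtained inside the proof of Theorem~\ref{bd for the dual vol of s}(c) (via Lemma~\ref{unique2} and $h=1$) and then Conrad's theorem as in Example~\ref{ex1}, so your translate-and-dualize step, which in effect re-runs Proposition~\ref{ex2} in reverse, is extra work rather than a new idea.
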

\begin{proof}
By assumption, the associated fan of $X$ is spanned by the faces of a $d$-dimensional $\frac{1}{q}$-lc Fano simplex $S$. 
Then by \cite[Theorem~13.4.3]{CLS11}, we have 
$$(-K_X)^d=\Vol(S^*).$$
So the conclusions follow from Theorem~\ref{bd for the dual vol of s} along with the following Example~\ref{ex1} and the proof of Theorem~\ref{bd for the dual vol of s}$(c)$.
\end{proof}

\begin{example}\label{ex1}
Let $e_1, \cdots, e_d$ be the standard basis of $\bR^d$. Set $e=\Sigma_{i=1}^{d-1}e_i$. Let $S$ be the convex hull 
of the set of $d+1$ lattice points $\{v_1, \cdots, v_{d+1}\}$ in $\bR^d$, where $v_i=(1+u_{i, q})e_i-qe$ for $1\leq i\leq d-1$, $v_d=-e_d-qe$ and $v_{d+1}=e_d-qe$. Then the barycentric coordinates of $\0$ with respect to $S$ are $\frac{q}{1+u_{1, q}}, \cdots, \frac{q}{1+u_{d-1, q}}, \frac{q}{2u_{d, q}}, \frac{q}{2u_{d, q}}.$

We claim that $S$ is a $d$-dimensional $\frac{1}{q}$-lc Fano simplex. 

First, the vertices $v_i$ are primitive lattice points because $1+u_{i, q}\equiv1(\mod q)$ for $i\in\{1, \cdots, d-1\}$.


Second, we want to show that $\frac{1}{q}=\max\{t|\inte(tS)\cap\ZZ^d=\{\0\}\}$. Take a real number $q'$ such that $0<q'\leq q$. Suppose $v\in\inte(\frac{1}{q'}S)\cap\ZZ^d$ has barycentric coordinates $\lambda_1, \cdots, \lambda_{d+1}$, i.e., $\Sigma_{i=1}^{d+1}\lambda_i=1$, $\lambda_i>0, \forall i\in\{1, \cdots, d+1\}$, and $v=\Sigma_{i=1}^{d+1}\lambda_i\frac{1}{q'}v_i\in\ZZ^d$. Then for $i\in\{1, \cdots, d-1\}$, $\lambda_i=\frac{q'z_i+q}{1+u_{i, q}}$ for some $z_i\in\ZZ$, and $\lambda_{d+1}-\lambda_d=q'z$ for some $z\in\ZZ$.

If $q'=q$, then $\lambda_i>0$ implies $\lambda_i\geq\frac{q}{1+u_{i, q}}$
for $i\in\{1, \cdots, d-1\}$. By \eqref{sum1} and the assumption that $v$ is an interior lattice point of $\frac{1}{q}S$, we must have $\lambda_i=\frac{q}{1+u_{i, q}}$ for $i\in\{1, \cdots, d-1\}$. And then $\lambda_d+\lambda_{d+1}=\frac{q}{u_{d, q}}$, which implies $\lambda_d=\lambda_{d+1}=\frac{q}{2u_{d, q}}$. So we conclude that $\inte(\frac{1}{q}S)\cap\ZZ^d=\{\0\}$.

If $0<q'<q$, then we can show that $-e_{d-1}\in\inte(\frac{1}{q'}S)\cap\ZZ^d$ by taking $\lambda_i=\frac{q}{1+u_{i, q}}$
for $i\in\{1, \cdots, d-2\}$, $\lambda_{d-1}=\frac{q-q'}{1+u_{d-1, q}}$ and $\lambda_d=\lambda_{d+1}=\frac{q'}{2(1+u_{d-1, q})}+\frac{q}{2u_{d, q}}$.

By Conrad's theorem (see \cite{Con02}), $S$ corresponds to the weighted projective space $$\bP(w_1, \cdots, w_{d+1}),\quad\text{where}\quad w_i:=|\det(v_j: j\in \{1, \cdots, d+1\}\setminus\{i\})|.$$
By calculations, $w_i=\frac{2u_{d, q}}{1+u_{i, q}}$ for $1\leq i\leq d-1$ and $w_d=w_{d+1}=1$. Let $X:=\bP(w_1, \cdots, w_{d+1})$, then $X$ is a $d$-dimensional $\frac{1}{q}$-lc toric Fano variety with $(-K_X)^d=\frac{2u_{d, q}^2}{q^{d+1}}.$  
\end{example}

\begin{prop}\label{ex2}
Let $e_1, \cdots, e_d$ be the standard basis of $\bR^d$. Set $e=\Sigma_{i=1}^{d-1}e_i$. Let $S:=\conv((1+u_{1, q})e_1-qe, \cdots, (1+u_{d-1, q})e_{d-1}-qe, -e_d-qe, e_d-qe)$. Then $S^*\simeq\conv(\frac{1+u_{1, q}}{q}e_{1}, \cdots, \frac{1+u_{d-1, q}}{q}e_{d-1}, \mp\frac{u_{d, q}}{q}e_{d})$.
\end{prop}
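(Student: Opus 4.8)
The plan is to verify the claim about $S^*$ by direct computation, using the definition of the dual polytope together with the explicit vertices of $S$. First I would set up coordinates: write $v_i = (1+u_{i,q})e_i - qe$ for $1\leq i\leq d-1$, $v_d = -e_d - qe$, $v_{d+1} = e_d - qe$, where $e = \sum_{i=1}^{d-1} e_i$. The dual is $S^* = \{m\in\bR^d \mid \langle m, v_j\rangle + 1 \geq 0 \text{ for all } j\}$. Since $S$ is a simplex with $\0$ in its interior, $S^*$ is also a simplex, and its $d+1$ facets correspond to the $d+1$ vertices of $S$; dually, the $d+1$ vertices of $S^*$ are obtained by intersecting, for each $i$, the $d$ hyperplanes $\{\langle m, v_j\rangle + 1 = 0\}$ with $j\neq i$.

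The key computational step is to solve, for each fixed $i\in\{1,\dots,d+1\}$, the linear system $\langle m, v_j\rangle = -1$ for all $j\neq i$, and check that the solution is the claimed vertex. I would first compute the candidate vertices of the claimed polytope $Q := \conv(\frac{1+u_{1,q}}{q}e_1, \dots, \frac{1+u_{d-1,q}}{q}e_{d-1}, \pm\frac{u_{d,q}}{q}e_d)$ — note this has $d+1$ vertices since the last coordinate contributes two points $\pm\frac{u_{d,q}}{q}e_d$ — and verify directly that each $v_j$ satisfies $\langle w, v_j\rangle + 1 \geq 0$ for every vertex $w$ of $Q$, with equality for exactly $d$ of the $j$'s. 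For instance, taking $w = \frac{1+u_{k,q}}{q}e_k$ with $k\leq d-1$: then $\langle w, v_j\rangle = \frac{1+u_{k,q}}{q}\cdot (v_j)_k$, and one reads off $(v_j)_k$ from the formulas ($(v_k)_k = 1+u_{k,q} - q$ when $k=j\leq d-1$... wait, more carefully $(v_j)_k = (1+u_{j,q})\delta_{jk} - q\cdot[k\leq d-1, j\leq d-1]$, and $(v_d)_k = (v_{d+1})_k = -q$ for $k\leq d-1$). Plugging in shows $\langle w, v_k\rangle + 1 = \frac{(1+u_{k,q})^2}{q} - (1+u_{k,q}) + 1$ and $\langle w, v_j\rangle + 1 = -\frac{(1+u_{k,q})}{q}\cdot q + 1 = 1 - (1+u_{k,q}) \leq 0$ — so I need to be careful about signs and should instead recheck: the correct normalization should make $S^* = Q$ up to a unimodular transformation, so I expect to possibly need a lattice automorphism (a shear) to match the two, which is why the statement says $\simeq$ rather than $=$.

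Concretely, the cleanest route is: (i) compute $S^*$ directly as the set cut out by the $d+1$ inequalities $\langle m, v_j\rangle \geq -1$; (ii) find its vertices by inverting the relevant $d\times d$ submatrices of the matrix with rows $v_j$; (iii) exhibit an explicit unimodular transformation $\phi$ (most likely of the form $\phi(m) = m + (\text{linear functional in } m)\cdot(\text{something})$, i.e. a shear fixing a coordinate hyperplane, needed to absorb the common $-qe$ translate appearing in all the $v_j$) carrying the computed vertices of $S^*$ to the vertices of $Q$; (iv) conclude $S^*\simeq Q$. The identity $\prod_{i=1}^{p-1}\frac{1}{1+u_{i,q}} = \frac{q}{u_{p,q}}$ from \eqref{product} will be what makes the last coordinate come out to $\pm\frac{u_{d,q}}{q}$ rather than some unwieldy product, since the determinant computations of the $w_i$ in Example~\ref{ex1} already used exactly this.

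The main obstacle I anticipate is bookkeeping rather than conceptual: the vertices $v_j$ all share the $-qe$ term, so the dual polytope will not be in the "diagonal" form of $Q$ on the nose, and one must correctly identify the unimodular shear that removes this shift. Getting the signs and the direction of the shear right, and confirming that the shear is genuinely unimodular (integer matrix, determinant $\pm1$), is the delicate part; everything else is substitution into the recursion $u_{k+1,q} = u_{k,q}(u_{k,q}+1)$ and the summation identity \eqref{sum1}. I would organize the write-up so that the shear is introduced first, applied to $S$ to put it in a normalized position, and only then dualize, which should make the coordinates of $S^*$ fall out transparently.
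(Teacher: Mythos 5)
Your outline follows essentially the same route as the paper: determine the vertices of $S^*$ from the equalities $\langle m, v_j\rangle=-1$ (for a simplex with $\0$ interior this is exactly the vertex--facet duality you invoke), and then exhibit an explicit unimodular \emph{affine} equivalence between $S^*$ and the claimed polytope, using \eqref{sum1} and \eqref{product}. However, as written the proposal leaves out precisely the step that carries the content, and your two concrete guesses about how that step goes would need repair. First, the equivalence is not a shear: doing the linear algebra gives $S^*=\conv\bigl(\tfrac1q e_1,\cdots,\tfrac1q e_{d-1},\ \mp\tfrac{u_{d,q}}{q}e_d-\tfrac1q\Sigma_{j=1}^{d-1}\tfrac{u_{d,q}}{1+u_{j,q}}e_j\bigr)$, and matching this to $\conv(\tfrac{1+u_{1,q}}{q}e_1,\cdots,\tfrac{1+u_{d-1,q}}{q}e_{d-1},\mp\tfrac{u_{d,q}}{q}e_d)$ forces a translation by $\tfrac1q\Sigma_j\tfrac{u_{d,q}}{1+u_{j,q}}e_j$ (integral thanks to \eqref{product}) followed by a genuinely non-unipotent integral linear map; the paper's choice has inverse $e_i\mapsto(1+u_{i,q})e_i-qe$, $e_d\mapsto e_d$, and proving that the forward map is integral is the delicate arithmetic point, resting on divisibility facts such as $\tfrac{u_{d,q}}{(1+u_{i,q})(1+u_{j,q})}\in\ZZ$ and $1+u_{l,q}\equiv1\ (\mathrm{mod}\ 1+u_{i,q})$ for $l>i$. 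None of this is in the proposal, and "integer matrix of determinant $\pm1$ of shear type" would not be found.

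Second, the suggested reorganization --- apply the normalizing transformation to $S$ first and only then dualize --- cannot work if that transformation is linear: the target polytope $\conv(\tfrac{1+u_{1,q}}{q}e_1,\cdots,\mp\tfrac{u_{d,q}}{q}e_d)$ has $\0$ on its boundary (in the relative interior of the edge joining $\mp\tfrac{u_{d,q}}{q}e_d$), whereas the dual of any polytope containing $\0$ in its interior again contains $\0$ in its interior; and duality does not commute with translations, so you cannot absorb the common $-qe$ shift on the $S$ side either. The translation has to be applied \emph{after} dualizing, which is exactly how the paper proceeds. So: right skeleton, but the unimodularity verification (the heart of the proposition) and the correct shape of the equivalence are missing, and the proposed order of operations would fail as stated.
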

\begin{proof}
    Let $\beta_1\geq\cdots\geq\beta_{d+1}$ be the barycentric coordinates of $\0$ with respect to $S$. Then 
    $$(\beta_1, \cdots, \beta_{d+1})=(\frac{q}{1+u_{1, q}}, \cdots, \frac{q}{1+u_{d-1, q}}, \frac{q}{2u_{d, q}}, \frac{q}{2u_{d, q}}).$$
    Set $u_i:=\frac{1}{q}e_i$ for $1\leq i\leq d-1$ and
    $$u_d:=-\frac{u_{d, q}}{q}e_d-\frac{1}{q}\Sigma_{j=1}^{d-1}\frac{u_{d, q}}{1+u_{j, q}}e_j, u_{d+1}:=\frac{u_{d, q}}{q}e_d-\frac{1}{q}\Sigma_{j=1}^{d-1}\frac{u_{d, q}}{1+u_{j, q}}e_j.$$
    We claim that $S^*=\conv(u_1, \cdots, u_{d+1})$. According to the standard duality (see \cite[Proposition~6.1]{AKN16} and \cite[Proposition~3.6]{Nill07}), it suffices to show that $<(1+u_{i, q})e_i-qe, u_j>=-1$ for $i\in\{1, \cdots, d-1\}$ and $j\in\{1, \cdots, d+1\}\setminus\{i\}$, $<-e_d-qe, u_j>=-1$ for $j\in\{1, \cdots, d+1\}\setminus\{d\}$, and $<e_d-qe, u_j>=-1$ for $j\in\{1, \cdots, d+1\}\setminus\{d+1\}$. So the claim follows directly from the construction.
    
    Set $Q:=S^*+\frac{1}{q}\Sigma_{j=1}^{d-1}\frac{u_{d, q}}{1+u_{j, q}}e_j$. We need to show that 
    $$Q\simeq\conv(\frac{1+u_{1, q}}{q}e_{1}, \cdots, \frac{1+u_{d-1, q}}{q}e_{d-1}, \mp\frac{u_{d, q}}{q}e_{d}).$$

    Define $\phi: \bR^d\longrightarrow\bR^d$ by sending $e_i$ to $\frac{q}{1+u_{i, q}}(\frac{1}{q}e_i+\frac{1}{q}\Sigma_{j=1}^{d-1}\frac{u_{d, q}}{1+u_{j, q}}e_j)$ for $i\in\{1, \cdots, d-1\}$, and mapping $e_d$ to $e_d$. We claim that $\phi$ is an integral transformation.
    By \eqref{product}, we have 
    $$u_{d, q}=q\Pi_{l=1}^{d-1}(1+u_{l, q})=u_{i, q}(1+u_{i, q})\Pi_{l=i+1}^{d-1}(1+u_{l, q}).$$
    So we can rewrite $\phi(e_i)$ as $$\frac{1+u_{i, q}\Pi_{l=i+1}^{d-1}(1+u_{l, q})}{1+u_{i, q}}e_i+\Sigma_{1\leq j\leq d-1, j\neq i}\frac{u_{d, q}}{(1+u_{i, q})(1+u_{j, q})}e_j$$ 
    for $i\in\{1, \cdots, d-1\}$. Note that $1+u_{l, q}\equiv1(\mod 1+u_{i, q})$ for $i+1\leq l\leq d-1$. So $\frac{1+u_{i, q}\Pi_{l=i+1}^{d-1}(1+u_{l, q})}{1+u_{i, q}}\in\ZZ$. Thus the claim follows from the fact that  $\frac{u_{d, q}}{(1+u_{i, q})(1+u_{j, q})}\in \ZZ$ for $j\in\{1, \cdots, d-1\}$ with $j\neq i$. 

   Define $\psi: \bR^d\longrightarrow\bR^d$ by sending $e_i$ to $(1+u_{i, q})e_i-qe$ and mapping $e_d$ to $e_d$. Certainly, $\psi$ is an integral transformation and for $i\in\{1, \cdots, d-1\}$, we have
    \begin{align*}
        \psi\circ\phi(e_i)&{}=\frac{q}{1+u_{i, q}}(\frac{1+u_{i, q}}{q}e_i-e+\frac{u_{d, q}}{q}e-\Sigma_{j=1}^{d-1}\frac{u_{d, q}}{1+u_{j, q}}e)\\
        &{}=e_i,
    \end{align*}
    where the last equality follows from \eqref{sum1}.
    So $Q$ is unimodularly equivalent to $\conv(\frac{1+u_{1, q}}{q}e_{1}, \cdots, \frac{1+u_{d-1, q}}{q}e_{d-1}, \mp\frac{u_{d, q}}{q}e_{d})$, and hence
    $$S^*\simeq\conv(\frac{1+u_{1, q}}{q}e_{1}, \cdots, \frac{1+u_{d-1, q}}{q}e_{d-1}, \mp\frac{u_{d, q}}{q}e_{d}).$$
\end{proof}

Now we come to finish the proof of Theorem~\ref{bd for the dual vol of s}. 

\begin{lem}\label{unique1}
 Let $e_1, \cdots, e_d$ be the standard basis of $\bR^d$. Let $S\subset\bR^d$ be the lattice simplex $\conv(v_1, \cdots, v_{d+1})$ with $v_{d+1}=\0$. Suppose $\inte(S)\cap q\ZZ^d=\{\Sigma_{i=1}^{d}\frac{qv_i}{1+u_{i, q}}\}$. Then $S\simeq\conv((1+u_{1, q})e_1, \cdots, (1+u_{d, q})e_d, \0)$.
\end{lem}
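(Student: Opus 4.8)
The plan is to squeeze $\Vol(S)$ between the upper bound of Theorem~\ref{simplex vol bd}, applied to the rational simplex $\tfrac1q S$, and a divisibility lower bound coming from the sublattice $\Lambda$ generated by $v_1,\cdots,v_d$; the two bounds will coincide, which pins down both $\Vol(S)$ and the ambient lattice relative to the $v_i$, after which the required unimodular map essentially writes itself. Concretely, set $m_0:=\tfrac1q w=\sum_{i=1}^d\tfrac{1}{1+u_{i,q}}v_i$, which lies in $\ZZ^d$ by hypothesis; a direct check gives $\inte(\tfrac1q S)\cap\ZZ^d=\{m_0\}$ (a point $m'$ lies there iff $qm'\in\inte(S)\cap q\ZZ^d=\{w\}$). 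With respect to the simplex $\tfrac1q S$, the point $m_0$ has barycentric coordinates $(\tfrac{q}{1+u_{1,q}},\cdots,\tfrac{q}{1+u_{d,q}},\tfrac{q}{u_{d+1,q}})$, the last entry and the fact that they sum to $1$ being precisely \eqref{sum1} with $p=d+1$. These are already sorted decreasingly, and by \eqref{product} with $p=d+1$ the product of the first $d$ of them is $\tfrac{q^{d+1}}{u_{d+1,q}}$. Hence Theorem~\ref{simplex vol bd} with $n=1$ gives $\vol_\ZZ(\tfrac1q S)\le\tfrac{u_{d+1,q}}{d!\,q^{d+1}}$, that is, $\Vol(S)\le\tfrac{u_{d+1,q}}{q}$.

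Next, let $\Lambda:=\ZZ v_1+\cdots+\ZZ v_d\subseteq\ZZ^d$, so that $\Vol(S)=|\det(v_1,\cdots,v_d)|=[\ZZ^d:\Lambda]$. Because $v_1,\cdots,v_d$ are linearly independent, the order of the class $\overline{m_0}\in\ZZ^d/\Lambda$ equals the least $k>0$ with $\tfrac{k}{1+u_{i,q}}\in\ZZ$ for every $i$, i.e. $\operatorname{lcm}_i(1+u_{i,q})$. A short induction based on $u_{i+1,q}=u_{i,q}(1+u_{i,q})$ shows $u_{j,q}\equiv0\pmod{1+u_{i,q}}$ whenever $j>i$; hence $1+u_{1,q},\cdots,1+u_{d,q}$ are pairwise coprime, and $\operatorname{lcm}_i(1+u_{i,q})=\prod_{i=1}^d(1+u_{i,q})=\tfrac{u_{d+1,q}}{q}$ (again by \eqref{product}). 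Therefore $[\ZZ^d:\Lambda]\ge\tfrac{u_{d+1,q}}{q}$, which together with the previous step forces $\Vol(S)=[\ZZ^d:\Lambda]=\tfrac{u_{d+1,q}}{q}$ and makes $\ZZ^d/\Lambda$ cyclic with generator $\overline{m_0}$; in particular $\ZZ^d=\Lambda+\ZZ m_0$.

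Finally I would produce the unimodular map. Let $\phi:\bR^d\to\bR^d$ be the linear isomorphism with $\phi(v_i)=(1+u_{i,q})e_i$ for $1\le i\le d$. Then $\phi(m_0)=\sum_{i=1}^d e_i\in\ZZ^d$ and $\phi(\Lambda)=\bigoplus_{i=1}^d\ZZ(1+u_{i,q})e_i\subseteq\ZZ^d$, so $\phi(\ZZ^d)=\phi(\Lambda+\ZZ m_0)\subseteq\ZZ^d$; thus $\phi$ is represented by an integer matrix, and its determinant satisfies $|\det\phi|=\prod_{i=1}^d(1+u_{i,q})/\Vol(S)=1$, so $\phi$ is a unimodular transformation fixing $\0$. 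Since $\phi(S)=\conv((1+u_{1,q})e_1,\cdots,(1+u_{d,q})e_d,\0)$, the lemma follows.

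The one step I expect to carry the weight is the lower bound $[\ZZ^d:\Lambda]\ge\tfrac{u_{d+1,q}}{q}$: the real content of the hypothesis is that $\tfrac1q w$ is an \emph{integral} point, and the key observation is that this forces the index $[\ZZ^d:\Lambda]=\Vol(S)$ to be a multiple of $\operatorname{lcm}_i(1+u_{i,q})=\prod_i(1+u_{i,q})$, which meets the upper bound from Theorem~\ref{simplex vol bd} on the nose. Once that coincidence is established, the barycentric normalization (via \eqref{sum1}), the pairwise coprimality of the $1+u_{i,q}$, and the construction of $\phi$ are all routine bookkeeping.
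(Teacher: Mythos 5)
Your argument is correct, but it is not the route the paper takes. The paper first deduces from coprimality of the $1+u_{i,q}$ that each $\frac{v_i}{1+u_{i,q}}$ is itself a lattice vector, forms the finer lattice $\Lambda'=\sum_i\ZZ\frac{v_i}{1+u_{i,q}}\subseteq\ZZ^d$, and shows $\det\Lambda'=1$ by counting lattice points in the half-open fundamental parallelepiped of $\Lambda'$ (which sits inside $\inte(S)$ by \eqref{sum1}), using the index-equals-point-count formula from \cite{Bar02}; the unimodular map is then $\frac{v_i}{1+u_{i,q}}\mapsto e_i$. You instead run a volume squeeze: Theorem~\ref{simplex vol bd} applied to $\frac1qS$ with its unique interior lattice point $m_0$ gives $\Vol(S)\leq\frac{u_{d+1,q}}{q}=\prod_{i=1}^d(1+u_{i,q})$ via \eqref{sum1} and \eqref{product}, while the order of $\overline{m_0}$ in $\ZZ^d/(\ZZ v_1+\cdots+\ZZ v_d)$, computed from linear independence and pairwise coprimality, forces $\prod_{i=1}^d(1+u_{i,q})$ to divide $\Vol(S)=[\ZZ^d:\Lambda]$; equality then makes the quotient cyclic, generated by $\overline{m_0}$, and the map $v_i\mapsto(1+u_{i,q})e_i$ is integral of determinant $\pm1$, hence unimodular. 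Both proofs hinge on the same arithmetic inputs (coprimality of the $1+u_{i,q}$ and \eqref{product}), but yours buys a shortcut by reusing Pikhurko's inequality already present in the paper, avoiding both the intermediate claim $\frac{v_i}{1+u_{i,q}}\in\ZZ^d$ and the parallelepiped-containment check, whereas the paper's argument is purely lattice-theoretic (no volume inequality) and directly yields the stronger conclusion $\Lambda'=\ZZ^d$. All the individual steps you wrote check out: the sorted barycentric coordinates of $m_0$, the value $\beta_1\cdots\beta_d=\frac{q^{d+1}}{u_{d+1,q}}$, the order computation, and the determinant count $|\det\phi|=\prod_i(1+u_{i,q})/\Vol(S)=1$.
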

\begin{proof}
    By assumption, we have $\Sigma_{i=1}^{d}\frac{v_i}{1+u_{i, q}}\in\ZZ^d$. Then for $1\leq i\leq d$, we conclude $\frac{v_i}{1+u_{i, q}}\in\ZZ^d$ by the observation that $1+u_{i, q}$ are coprime to each other. 

    Let $\Lambda:=\frac{v_1}{1+u_{1, q}}\ZZ+\cdots+\frac{v_d}{1+u_{d, q}}\ZZ$ be the sublattice of $\ZZ^d$. We claim that $\det(\Lambda)=1$. Since by construction,
    \begin{align*}
        \{\Sigma_{i=1}^{d}\frac{qv_i}{1+u_{i, q}}\}{}&\subset((0, 1]\frac{qv_1}{1+u_{1, q}}+\cdots+(0, 1]\frac{qv_d}{1+u_{d, q}})\cap q\Lambda\\
        {}&\subset((0, 1]\frac{qv_1}{1+u_{1, q}}+\cdots+(0, 1]\frac{qv_d}{1+u_{d, q}})\cap q\ZZ^d\\
        {}&\subset\inte(S)\cap q\ZZ^d=\{\Sigma_{i=1}^{d}\frac{qv_i}{1+u_{i, q}}\},
    \end{align*}
    we get $((0, 1]\frac{v_1}{1+u_{1, q}}+\cdots+(0, 1]\frac{v_d}{1+u_{d, q}})\cap \ZZ^d=\{\Sigma_{i=1}^{d}\frac{v_i}{1+u_{i, q}}\}$. So the claim holds because 
    $$\det\Lambda=\frac{\det\Lambda}{\det\ZZ^d}=|((0, 1]\frac{v_1}{1+u_{1, q}}+\cdots+(0, 1]\frac{v_d}{1+u_{d, q}})\cap \ZZ^d|$$ 
    by \cite[Chapter~VII, Theorem~2.5]{Bar02} and by noting that
    \begin{align*}
        &{}|((0, 1]\frac{v_1}{1+u_{1, q}}+\cdots+(0, 1]\frac{v_d}{1+u_{d, q}})\cap \ZZ^d|\\
        =&{}|([0, 1)\frac{v_1}{1+u_{1, q}}+\cdots+[0, 1)\frac{v_d}{1+u_{d, q}})\cap \ZZ^d|.
    \end{align*}
    Hence $\Lambda=\ZZ^d$. Then the map defined by sending $\frac{v_i}{1+u_{i, q}}$ to $e_i$ for $1\leq i\leq d$ is a unimodular transformation. Thus $S\simeq\conv((1+u_{1, q})e_1, \cdots, (1+u_{d, q})e_d, \0)$.
\end{proof}

\begin{lem}\label{unique2}
 Let $e_1, \cdots, e_d$ be the standard basis of $\bR^d$. Suppose $S\subset\bR^d$ is a $d$-dimensional $\frac{1}{q}$-lc Fano simplex such that the barycentric coordinates of $\0$ with respect to $S$ are $$(\beta_1, \cdots, \beta_{d+1})=(\frac{q}{1+u_{1, q}}, \cdots, \frac{q}{1+u_{d-1, q}}, \frac{q}{2u_{d, q}}, \frac{q}{2u_{d, q}}),$$ Then  $S\simeq\conv((1+u_{1, q})e_1, \cdots, (1+u_{d-1, q})e_{d-1}, \mp(v^{(d-1)}, h))$ for some $v^{(d-1)}\in\{0, 1, \cdots, h-1\}^{d-1}$ and some positive integer $h\in\ZZ$. 
\end{lem}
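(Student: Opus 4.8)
The plan is to exploit the special shape of the barycentric coordinates — the last two are equal — to split $S$ into a $(d-1)$-dimensional part and a "double pyramid" in the last direction, and then to apply Lemma~\ref{unique1} to the $(d-1)$-dimensional slice. Concretely, write $\vex(S)=\{v_1,\dots,v_{d-1},v_d,v_{d+1}\}$ with $\beta_d=\beta_{d+1}=\frac{q}{2u_{d,q}}$. Since $\sum\beta_i v_i=\0$ and $\beta_d=\beta_{d+1}$, the midpoint $w:=\tfrac12(v_d+v_{d+1})$ satisfies $\sum_{i=1}^{d-1}\beta_i v_i+2\beta_d w=\0$; that is, $\0$ lies in the convex hull of $v_1,\dots,v_{d-1},w$ with barycentric coordinates proportional to $(\tfrac{q}{1+u_{1,q}},\dots,\tfrac{q}{1+u_{d-1,q}},\tfrac{q}{u_{d,q}})$, which by \eqref{sum1} already sum to $1$. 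So $\0$ lies in the affine hyperplane $H$ spanned by $v_1,\dots,v_{d-1},w$, and in fact in the interior of the $(d-1)$-simplex $T:=\conv(v_1,\dots,v_{d-1},w)\subset H$.

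First I would set up coordinates so that $v_d$ and $v_{d+1}$ differ only in the last coordinate. Because $\0\in H$ and $H$ is a rational hyperplane (the $v_i$ are lattice points), $H\cap\ZZ^d$ is a rank-$(d-1)$ lattice; after a unimodular transformation I may assume $H=\{x_d=0\}$, so that $v_1,\dots,v_{d-1}\in\ZZ^{d-1}\times\{0\}$, $w\in\ZZ^{d-1}\times\{0\}$, and $v_d=(a-c,-h)$, $v_{d+1}=(a+c,h)$ for some $a,c\in\tfrac12\ZZ^{d-1}$ and $h\in\tfrac12\ZZ_{>0}$ with $w=(a,0)$. Primitivity of $v_d,v_{d+1}$ and the fact that they are honest lattice points force $2a,2c$ and $2h$ to interact with the lattice correctly; a short argument (using that $v_d,v_{d+1}\in\ZZ^d$ and $v_d+v_{d+1}=2w\in 2(\ZZ^{d-1}\times\{0\})$) shows $c\in\ZZ^{d-1}$ can be absorbed by the shear $x\mapsto x+(\text{something})\cdot x_d$, so after a further unimodular transformation $v_d=-( v^{(d-1)},h)$, $v_{d+1}=(v^{(d-1)},h)$... wait, more carefully: one arranges $v_{d+1}=(v',h)$ with $0\le v'_j<h$ for each $j$ (reducing the first $d-1$ coordinates of $v_{d+1}$ modulo $h$ by shears of the form $e_j\mapsto e_j$, $e_d\mapsto e_d - \sum m_j e_j$... these fix the hyperplane $H$ only up to the translation; one instead uses shears fixing $x_d=0$ pointwise, i.e. $e_j\mapsto e_j,\ e_d\mapsto e_d+\sum m_je_j$, which change $v_{d+1}$'s first coordinates by multiples of $h$). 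This yields $h\in\ZZ_{>0}$ and $v^{(d-1)}:=v'\in\{0,1,\dots,h-1\}^{d-1}$.

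Next, the condition that $\inte(\tfrac1qS)\cap\ZZ^d=\{\0\}$ restricts to the hyperplane $H$: the interior lattice points of $\tfrac1q T$ (in the lattice $H\cap\ZZ^d\cong\ZZ^{d-1}$) correspond to a subset of the interior lattice points of $\tfrac1qS$, because $T$ is a face-like slice through $\0$. More precisely I would check that any $y\in\inte(\tfrac1qT)\cap\ZZ^{d-1}$ is also in $\inte(\tfrac1qS)\cap\ZZ^d$: its barycentric coordinates with respect to $T$ convert to barycentric coordinates with respect to $S$ by splitting the $w$-coordinate equally between $v_d$ and $v_{d+1}$, and all remain positive. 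Hence $\inte(\tfrac1qT)\cap\ZZ^{d-1}=\{\0\}$, i.e. translating so that $v_{d-1}$ (or rather: applying Lemma~\ref{unique1} with the role of $v_{d+1}=\0$ played by $w$ after a translation) $\tfrac1qT$ is a $\tfrac1q$-lc Fano-type simplex with the prescribed barycentric coordinates $\tfrac{q}{1+u_{1,q}},\dots,\tfrac{q}{1+u_{d-1,q}},\tfrac{q}{u_{d,q}}$ of its unique interior $q$-lattice point. Translating $T$ so that $w=\0$ and rescaling, Lemma~\ref{unique1} (applied in dimension $d-1$, whose hypotheses match after checking $\inte(T-w)\cap q\ZZ^{d-1}=\{\sum\tfrac{q(v_i-w)}{1+u_{i,q}}\}$) gives $T-w\simeq\conv((1+u_{1,q})e_1,\dots,(1+u_{d-1,q})e_{d-1},\0)$ — i.e., after a unimodular transformation of $H$, $v_i-w=(1+u_{i,q})e_i$ for $1\le i\le d-1$. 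Pulling this back to $\bR^d$ (extending the transformation of $H$ to all of $\bR^d$ by fixing the $e_d$ direction) and combining with the earlier normalization of $v_d,v_{d+1}$ gives exactly $S\simeq\conv((1+u_{1,q})e_1,\dots,(1+u_{d-1,q})e_{d-1},\mp(v^{(d-1)},h))$.

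**The main obstacle** I anticipate is the bookkeeping in the second paragraph: showing that after moving to $H=\{x_d=0\}$ one really can, by unimodular transformations of $\bR^d$ that preserve the hyperplane $H$ setwise (so as not to disturb the already-identified structure on $v_1,\dots,v_{d-1}$), bring $v_d,v_{d+1}$ into the symmetric antipodal form $\mp(v^{(d-1)},h)$ with $v^{(d-1)}\in\{0,\dots,h-1\}^{d-1}$. The subtlety is that $w$ need not be a lattice point (only $2w=v_d+v_{d+1}\in\ZZ^d$), and $\0\in H$ need not be a lattice point of $H\cap\ZZ^d$, so one must be careful about which translations are allowed and track the origin's position throughout. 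I would handle this by first establishing that $v_d+v_{d+1}\in 2(H\cap\ZZ^d)$ forces $v_d\equiv v_{d+1}\pmod{2}$ is \emph{not} what's needed; rather, one uses the shear group $\{x\mapsto x+\ell(x_d)\cdot u : u\in H\cap\ZZ^d,\ \ell\in(\ZZ^d)^\vee,\ \ell|_H=0\}$ acting on the pair $(v_d,v_{d+1})$ and computes that its orbit contains a representative with the last-coordinate-antipodal normal form, then reduces the remaining $(d-1)$ coordinates of $v_{d+1}$ modulo $h$. Everything else — the reduction to the hyperplane, the slice argument for the lc condition, and the invocation of Lemma~\ref{unique1} — is routine once this normal form is in hand.
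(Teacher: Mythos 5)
Your overall plan coincides with the paper's: pass to the $(d-1)$-dimensional slice through $v_1,\dots,v_{d-1}$ and the midpoint $w=\tfrac12(v_d+v_{d+1})$, apply Lemma~\ref{unique1} there, and then normalize the antipodal pair $v_d,v_{d+1}$ by shears fixing the hyperplane to reduce the first $d-1$ coordinates modulo $h$. However, there is a genuine gap exactly at the point you flag as ``the main obstacle'': you never establish that $w$ is a lattice point, and in fact you need the stronger statement $w\in q\ZZ^d$. This follows from a short arithmetic observation you are missing: by \eqref{product}, $u_{d,q}=q\,\Pi_{l=1}^{d-1}(1+u_{l,q})$, so $\frac{u_{d,q}}{1+u_{i,q}}\in q\ZZ$ for every $i$, and the barycentric relation $\Sigma_i\beta_iv_i=\0$ with the given $\beta_i$ gives
\begin{equation*}
\frac{v_d+v_{d+1}}{2}\;=\;-\,\Sigma_{i=1}^{d-1}\frac{u_{d,q}}{1+u_{i,q}}\,v_i\;\in\;q\ZZ^d .
\end{equation*}
This single fact is what makes the rest of your argument legitimate: translating $S$ by $-w$ preserves $q\ZZ^d$, so the condition $\inte(S)\cap q\ZZ^d=\{\0\}$ becomes $\inte(S-w)\cap q\ZZ^d=\{-w\}$ and restricts to the slice in exactly the form required by the hypothesis of Lemma~\ref{unique1} (whose distinguished interior point must lie in $q\ZZ^d$); moreover $v_{d+1}-w\in\ZZ^d$, so $h$ is an honest positive integer.

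Your proposed workaround for the case ``$w$ not a lattice point'' (searching the orbit under the shear group for an antipodal normal form) cannot succeed and should be discarded: in the target normal form $\conv((1+u_{1,q})e_1,\dots,(1+u_{d-1,q})e_{d-1},\mp(v^{(d-1)},h))$ the midpoint of the last two vertices is $\0\in\ZZ^d$, and unimodular transformations (affine bijections preserving $\ZZ^d$) carry midpoints of lattice points to midpoints of lattice points, so the conclusion itself forces $w\in\ZZ^d$; and even $w\in\ZZ^d\setminus q\ZZ^d$ would break the transfer of the $\frac1q$-lc condition to the hypothesis of Lemma~\ref{unique1}. Once you insert the displayed divisibility computation, the remaining steps (slice argument for the lc condition, invocation of Lemma~\ref{unique1} in dimension $d-1$, extension of the hyperplane transformation to $\bR^d$ fixing $e_d$, and the final unimodular shear with last row $(c_1,\dots,c_{d-1},1)$ reducing $v^{(d-1)}$ modulo $h$) are exactly the paper's proof and go through as you describe.
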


\begin{proof}
Let $v_1, \cdots, v_{d+1}$ be the vertices of $S$ associated to the  barycentric coordinates $\beta_1, \cdots, \beta_{d+1}$. Then $\Sigma_{i=1}^{d+1}\beta_i v_i=\0.$ So we get
    $$\frac{q}{1+u_{1, q}}\cdot v_1+\cdots+\frac{q}{1+u_{d-1, q}}\cdot v_{d-1}+\frac{q}{2u_{d, q}}\cdot(v_d+v_{d+1})=\0.$$
Since $u_{d, q}$ is divided by $q(1+u_{i, q})$ for each $1\leq i\leq d-1$, we have  $$\frac{v_d+v_{d+1}}{2}=-\Sigma_{i=1}^{d-1}\frac{u_{d, q}}{1+u_{i, q}}\cdot v_i \in q\ZZ^d.$$
Now set $v:=-\frac{v_d+v_{d+1}}{2}$, $S^\prime:=S+v$ and $v_i^\prime:=v_i+v$ for $1\leq i\leq d+1$. Denote by $H$ the hyperplane section $\conv(v_1^\prime, \cdots, v_{d-1}^\prime, \0)$ of $S^\prime$. By assumption that $S$ is a $\frac{1}{q}$-lc Fano simplex, we have $\inte(S)\cap q\ZZ^d=\{\0\},$
which implies $$\inte(S^\prime)\cap q\ZZ^d=\{v\}=\{\Sigma_{i=1}^{d-1}\frac{q}{1+u_{i, q}}v_i^\prime\}$$ by noting that $v\in q\ZZ^d$. 
And then $\{v\}\subset\inte(H)\subset\inte(S^\prime)$ implies $$\inte(H)\cap q\ZZ^d=\{\Sigma_{i=1}^{d-1}\frac{q}{1+u_{i, q}}v_i^\prime\}.$$ 
So Lemma~\ref{unique1} applies to $H$. We may assume $H=\conv((1+u_{1, q})e_1, \cdots, (1+u_{d-1, q})e_{d-1}, \0)$ by doing some unimodular transformation. 
By construction, $v_d^\prime+v_{d+1}^\prime=\0$. So we can write $-v_d^\prime=v_{d+1}^\prime=(v^{(d-1)}, h)$ for some $v^{(d-1)}\in\ZZ^{d-1}$ and $h\in\ZZ$. We may assume $h>0$ by possibly interchanging  $v_d^\prime$ and $v_{d+1}^\prime$. 
Let $v'^{(d-1)}\in\{0, 1, \cdots, h-1\}^{d-1}$ such that $v'^{(d-1)}\equiv v^{(d-1)} (\mod{h})$. Assume $v^{(d-1)}=(x_1, \cdots, x_{d-1})$ and $v'^{(d-1)}=(x_1+c_1 h, \cdots, x_{d-1}+c_{d-1}h)$ for some integers $c_1, \cdots, c_{d-1}$. Consider the $d\times d$ matrix $B:=$
    \[
\begin{bmatrix}
  1 & 0 & \cdots & 0 & 0 \\
  0 & 1 & \cdots & 0 & 0 \\
  \vdots & \vdots & \ddots & \vdots & \vdots \\
  0 & 0 & \cdots & 1 & 0 \\
  c_1 & c_2 & \cdots & c_{d-1} & 1
\end{bmatrix}
\]
We conclude the proof by the observation that the transformation induced by $B$ is a unimodular transformation preserving $\bR^{d-1}\times\{0\}$ which maps $S^\prime$ to $\conv((1+u_{1, q})e_1, \cdots, (1+u_{d-1, q})e_{d-1}, \mp(v'^{(d-1)}, h))$.
    
\end{proof}

\begin{proof}[{Proof of Theorem~\ref{bd for the dual vol of s}$(c)$}]
We only need to prove that for $q\geq2$, $\Vol(S^*)=\frac{2u_{d, q}^2}{q^{d+1}}$  implies $S^*\simeq\conv(\frac{1+u_{1, q}}{q}e_{1}, \cdots, \frac{1+u_{d-1, q}}{q}e_{d-1}, \mp\frac{u_{d, q}}{q}e_{d}).$ Suppose the equality holds. By Theorem~\ref{lower bd for bc d}, we see that the equality holds if only if 
$$(\beta_1, \cdots, \beta_{d+1})=(\frac{q}{1+u_{1, q}}, \cdots, \frac{q}{1+u_{d-1, q}}, \frac{q}{2u_{d, q}}, \frac{q}{2u_{d, q}}),$$
where $\beta_1, \cdots, \beta_{d+1}$ are the barycentric coordinates of $\0$ with respect to $S$. So by applying Lemma~\ref{unique2}, we have  $S\simeq\conv((1+u_{1, q})e_1, \cdots, (1+u_{d-1, q})e_{d-1}, \mp(v^{(d-1)}, h))$ for some $v^{(d-1)}\in\{0, 1, \cdots, h-1\}^{d-1}$ and some positive integer $h\in\ZZ$. On the other hand, deducing from \cite[Proposition~3.6]{Nill07} or following from \cite[Proposition~6.2]{AKN16}, we have $$\Vol(S)\cdot\Vol(S^*)=\frac{1}{\Pi_{i=1}^{d+1}\beta_i}.$$
So by calculation, $\Vol(S)=\frac{2u_{d, q}}{q}=\Pi_{i=1}^{d-1}(1+u_{i, q})\cdot2h$. Then we must have $h=1$ and hence $S\simeq\conv((1+u_{1, q})e_1, \cdots, (1+u_{d-1, q})e_{d-1}, \mp e_d)$. So the conclusion follows from Proposition~\ref{ex2}.
\end{proof}

\section{Bounding the dual normalized volumes of Fano polytopes}
In this section, we follow the strategy in \cite{BKN22} to derive an upper bound for the dual normalized volumes of minimal Fano polytopes. Fix two positive integers $d\geq3$ and $q\geq2$. Let $P$ be a $d$-dimensional minimal $\frac{1}{q}$-lc Fano polytope such that $P$ is not a simplex. Then there exists a decomposition of $P$ into lower dimensional $\frac{1}{q}$-lc Fano simplices as in Theorem~\ref{decomp}. Keep the same notation as in Theorem~\ref{decomp}. According to the monotonicity of the normalised volume (cf. \cite[(3.1)]{BKN22}), we have the following upper bound for the normalized volume of $P^*$:
\begin{align}
    \Vol(P^*)\leq\frac{(d_1+\cdots+d_t)!}{d_{1}!\cdots d_{t}!}\cdot\Pi_{i=1}^{t}\Vol(S_i^*),\label{ineq for dual vol}
\end{align}
where for each $1\leq i\leq t$, $\Vol(S_i^*)$ means the normalized volume of $S_i^*$ in the corresponding $d_i$-dimensional $\bR$-vector space. By Theorem~\ref{bd for the dual vol of s}, we have $\Vol(S_i^*)\leq \frac{2u_{d_i, q}^{2}}{q^{d_i+1}}$.
Our first step is to figure out when the following inequality holds:
\begin{align}
    \frac{(d_1+\cdots+d_t)!}{d_{1}!\cdots d_{t}!}\cdot\Pi_{i=1}^{t}\frac{2u_{d_i, q}^{2}}{q^{d_i+1}}<\frac{2u_{d, q}^2}{q^{d+1}}.\label{ineq for dual vol+}
\end{align}
For this purpose, we shall find a reasonable approximation of $u_{n, q}$ for any $n\in\ZZ_{>0}$.
\begin{lem}\label{appro}
    Fix a positive integer $q\geq2$. Then there exists a real number $K$ satisfying $\sqrt{2}<K<q$ such that for any $n\in\ZZ_{>0}$, 
    $$u_{n, q}<K^{2^n}<u_{n, q}+1.$$
\end{lem}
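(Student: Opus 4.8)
The idea is to produce the constant $K$ as a limit. Recall the recursion $u_{1,q}=q$ and $u_{n+1,q}=u_{n,q}(u_{n,q}+1)=u_{n,q}^2+u_{n,q}$. Define $a_n:=u_{n,q}^{1/2^n}$ and $b_n:=(u_{n,q}+1)^{1/2^n}$; then $a_n<b_n$ for all $n$, and we want a single $K$ wedged between every $a_n$ and every $b_n$. The plan is to show $(a_n)$ is nondecreasing and $(b_n)$ is nonincreasing, so that they converge to a common value (since $b_n/a_n=(1+1/u_{n,q})^{1/2^n}\to1$ as $u_{n,q}\to\infty$ doubly-exponentially), and that common limit is the desired $K$. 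Monotonicity of $(a_n)$: from $u_{n+1,q}>u_{n,q}^2$ we get $a_{n+1}=u_{n+1,q}^{1/2^{n+1}}>(u_{n,q}^2)^{1/2^{n+1}}=a_n$, so $(a_n)$ is strictly increasing; hence $K:=\sup_n a_n=\lim_n a_n$ satisfies $K>a_n$, i.e. $u_{n,q}<K^{2^n}$, for every $n$. Monotonicity of $(b_n)$: one computes $u_{n+1,q}+1=u_{n,q}^2+u_{n,q}+1<(u_{n,q}+1)^2$, so $b_{n+1}=(u_{n+1,q}+1)^{1/2^{n+1}}<((u_{n,q}+1)^2)^{1/2^{n+1}}=b_n$; hence $(b_n)$ is strictly decreasing and $K=\lim a_n=\lim b_n<b_n$, giving $K^{2^n}<u_{n,q}+1$ for every $n$. (Convergence to the same limit: $\log b_n-\log a_n=2^{-n}\log(1+1/u_{n,q})\le 2^{-n}/u_{n,q}\to0$, and both sequences are monotone and bounded, so they share a limit.)

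It remains to pin down the two numerical bounds $\sqrt2<K<q$. For the lower bound, since $(a_n)$ is increasing we have $K>a_1=q^{1/2}\ge\sqrt2$ when $q\ge2$; to get the strict inequality $\sqrt2<K$ just note $K>a_2=u_{2,q}^{1/4}=(q^2+q)^{1/4}\ge 6^{1/4}>\sqrt2$, or simply observe $K\ge a_2>a_1=\sqrt q\ge\sqrt2$ with the first inequality strict. For the upper bound, since $(b_n)$ is decreasing we have $K<b_1=(q+1)^{1/2}$, which is not quite $<q$; instead use that $K=\lim a_n$ and bound $a_n<q$ for all $n$ by induction — $a_1=q^{1/2}<q$, and if $u_{n,q}<q^{2^n}$ then $u_{n+1,q}=u_{n,q}^2+u_{n,q}<q^{2^{n+1}}+q^{2^n}<q^{2^{n+1}}\cdot(1+q^{-2^n})\le q^{2^{n+1}}\cdot q$... this overshoots, so the cleaner route is: $u_{n+1,q}=u_{n,q}(u_{n,q}+1)$ and one shows by induction $u_{n,q}+1<q^{2^n}$ for $q\ge2$ and $n\ge1$ (base: $u_{1,q}+1=q+1\le q^2$ since $q\ge2$; step: $u_{n+1,q}+1=u_{n,q}(u_{n,q}+1)+1<(q^{2^n}-1)q^{2^n}+1<q^{2^{n+1}}$), whence $b_n=(u_{n,q}+1)^{1/2^n}<q$ and therefore $K<b_n<q$. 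Since $K=\lim a_n$ and each $a_n<b_n<q$, we conclude $K<q$ (with strict inequality, as $K\le b_2<q$).

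The main obstacle is purely bookkeeping: getting the \emph{strict} inequality $K<q$ rather than merely $K\le q$, and making sure the elementary induction $u_{n,q}+1<q^{2^n}$ has the right base case for all $q\ge2$ (it fails for $q=1$, consistent with the hypothesis $q\ge2$). Everything else is a standard "convergent-from-both-sides" argument for iterated-square recursions, of exactly the flavour already used implicitly when the introduction notes that $u_{i,q}$ has leading term $q^{2^{i-1}}$ as a polynomial in $q$.
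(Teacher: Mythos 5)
Your proposal is correct. The number you construct is in fact the same constant as in the paper's proof, but the verification is packaged differently: the paper takes logarithms, rewrites the recursion as $y_{n+1}=2y_n+c_n$ with $y_n=\log u_{n,q}$ and $c_n=\log(1+1/u_{n,q})$, solves it explicitly, defines $K=e^{\sum_{i\ge0}2^{-1-i}c_i}$, and gets both inequalities by bounding the tail $r_n=\sum_{i\ge n}2^{n-1-i}c_i$ by $c_n$ (and $\sqrt q<K<q$ directly from $c_0/2<\sum 2^{-1-i}c_i<c_0$); you instead squeeze $K$ between the monotone sequences $a_n=u_{n,q}^{1/2^n}\nearrow$ and $b_n=(u_{n,q}+1)^{1/2^n}\searrow$, which is an equivalent but slightly more elementary nested-intervals argument that avoids writing down the series. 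Each route buys essentially the same thing; the paper's gives a closed-form expression for $K$ and the bounds $\sqrt q<K<q$ in one line, yours keeps everything at the level of the recursion itself. One small slip, harmless but worth fixing: you assert that $K<b_1=(q+1)^{1/2}$ ``is not quite $<q$,'' but in fact $\sqrt{q+1}<q$ for all $q\ge2$ (since $q^2-q-1\ge1$), so this already yields $K<q$ and your induction $u_{n,q}+1<q^{2^n}$, while correct, is not needed for the upper bound; likewise $K>a_1=\sqrt q\ge\sqrt2$ is already strict because $(a_n)$ is strictly increasing with limit $K$, so the detour through $a_2$ is unnecessary.
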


\begin{proof}
    Set $u_{0, q}:=1$ and $y_n:=\log u_{n, q}$ for $n\geq0$. Set $c_n:=\log(1+\frac{1}{u_{n, q}})$ for $n\geq1$ and $c_0:=\log q$. Then $y_{n+1}=2y_n+c_n$ for any $n\geq0$. So for any $n\in\ZZ_{>0}$, we have
    \begin{align*}
        y_n &{}=2^n\cdot(y_0+\frac{c_0}{2}+\frac{c_1}{2^2}+\cdots+\frac{c_{n-1}}{2^n})\\
        &{}=\Sigma_{i=0}^{\infty}2^{n-1-i}c_i-\Sigma_{i=n}^{\infty}2^{n-1-i}c_i\\
        &{}=x_n-r_n,
    \end{align*}
    where $x_n:=\Sigma_{i=0}^{\infty}2^{n-1-i}c_i$ and $r_n:=\Sigma_{i=n}^{\infty}2^{n-1-i}c_i$.
    Choose $K:=e^{\Sigma_{i=0}^{\infty}2^{-1-i}c_i}$. Then $K^{2^n}=e^{x_n}$. Since $\frac{c_0}{2}<\Sigma_{i=0}^{\infty}2^{-1-i}c_i<\Sigma_{i=0}^{\infty}2^{-1-i}c_0=c_0$, we have $$\sqrt{2}\leq\sqrt{q}<K<q.$$

    For $n\geq1$, $0<r_n<\Sigma_{i=n}^{\infty}2^{n-1-i}c_n\leq c_n$ by construction, so we have $$K^{2^n}e^{-c_n}<u_{n, q}=e^{x_n-r_n}<K^{2^n},$$ which implies $u_{n, q}<K^{2^n}<u_{n, q}+1.$
\end{proof}

\begin{prop}\label{1st bd}
    Fix two positive integers $d\geq3$ and $q\geq2$. Let $P$ be a $d$-dimensional minimal $\frac{1}{q}$-lc Fano polytope. Suppose $P$ is not a simplex and $P$ is decomposed into lower dimensional $\frac{1}{q}$-lc Fano simplices as in Theorem~\ref{decomp}. Keep the same notation as in Theorem~\ref{decomp}. Suppose the decomposition is not the case when $t=2, d_1=d_2=d-1$. Then $$\Vol(P^*)<\frac{2u_{d, q}^2}{q^{d+1}}.$$
\end{prop}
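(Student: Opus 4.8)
The plan is to deduce the proposition from the numerical inequality \eqref{ineq for dual vol+} alone. By \eqref{ineq for dual vol} together with the simplex estimate $\Vol(S_i^*)\le\frac{2u_{d_i, q}^{2}}{q^{d_i+1}}$ (this is Theorem~\ref{bd for the dual vol of s} when $d_i\ge2$; when $d_i=1$ it holds with equality, since a one-dimensional $\frac1q$-lc Fano simplex is $[-1,1]$ and $\Vol([-1,1])=2=\frac{2u_{1,q}^2}{q^2}$), it suffices to establish \eqref{ineq for dual vol+} under the standing hypothesis that the decomposition of Theorem~\ref{decomp} is not the one with $t=2$, $d_1=d_2=d-1$. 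First I would set $N:=r_1+\cdots+r_t$, so that $\sum_i d_i=d+N$ by \eqref{sigma di} and $\sum_i(d_i+1)=d+N+t$, and rewrite \eqref{ineq for dual vol+} in the equivalent form $R<1$, where
$$R:=\frac{(d+N)!}{d_1!\cdots d_t!}\cdot 2^{t-1}\cdot\frac{\prod_{i=1}^{t}u_{d_i, q}^{2}}{u_{d, q}^{2}}\cdot q^{\,1-N-t}.$$

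The combinatorial core is the elementary inequality $M:=\sum_{i=1}^{t}2^{d_i}\le 3\cdot 2^{d-2}$, which I claim holds precisely when we are not in the excluded case. I would prove it using only $t\ge2$ and $d_i\le d-t+1$ (from \eqref{bd di}) by a trichotomy: if some $d_i=d-1$ then $d-1\le d-t+1$ forces $t=2$, the other index is $\le d-1$ with equality only in the excluded case, so otherwise $M\le 2^{d-1}+2^{d-2}$; if $t=2$ and both $d_i\le d-2$ then $M\le 2\cdot 2^{d-2}$; and if $t\ge 3$ then $M\le t\cdot 2^{d-t+1}\le 3\cdot 2^{d-2}$, because $t\,2^{3-t}\le 3$ for every $t\ge 3$. (The excluded case has $M=2^d>3\cdot2^{d-2}$, so the threshold is sharp.)

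Next I would invoke Lemma~\ref{appro}. With $K$ as there ($\sqrt q<K<q$), the combinatorial bound gives $\prod_i u_{d_i, q}^2<K^{2M}=(K^2)^M$, while $u_{d, q}^2>(K^{2^d}-1)^2>\tfrac12 K^{2^{d+1}}$ since $K^{2^d}\ge(\sqrt2)^{8}$; hence $\frac{\prod_i u_{d_i, q}^2}{u_{d, q}^2}<2(K^2)^{M-2^d}<2q^{\,M-2^d}$, using $K^2>q$ and $M<2^d$. Substituting, $R<R^{+}:=\frac{(d+N)!}{d_1!\cdots d_t!}\,2^{t}\,q^{\,M-2^d+1-N-t}$, and since $2^d-M\ge 2^{d-2}$ the exponent of $q$ here is $\le -2^{d-2}+1-N-t<0$, so $R^{+}$ is decreasing in $q\ge2$. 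It therefore suffices to check that $R^{+}<1$ at $q=2$ for every admissible shape $(t,d_1,\dots,d_t)$, i.e.
$$\frac{(d+N)!}{d_1!\cdots d_t!}\ <\ 2^{\,2^d-M+N-1}.$$
For fixed $d$ there are finitely many shapes (bounded by \eqref{sigma di}, \eqref{bd di}, \eqref{number of vert}), and for $d$ large this closes by a routine estimate of the multinomial, e.g. $\frac{(d+N)!}{d_1!\cdots d_t!}\le t^{\,d+N}$ together with $N\le(t-1)(d-t)$ and the shape-dependent lower bounds on $2^d-M$; the case $d=5$ is a short finite check (the tightest shape throughout being $t=3$, $d_1=d_2=d_3=d-2$, where for $d=5$ one has $1680<2^{11}$).

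The genuinely delicate point, and the main obstacle, is $d=3$ and $d=4$, where the displayed inequality fails for a few shapes: $(t,(d_i))=(2,(2,1))$ and $(3,(1,1,1))$ when $d=3$, and $(3,(2,2,2))$ when $d=4$ (all other shapes in these dimensions are covered above). For these I would instead check $R<1$ for all $q\ge2$ by hand: with $u_{2,q}=q(q+1)$ and $u_{3,q}=q(q+1)(q^2+q+1)$ one gets, respectively,
$$R=\frac{6q}{(q^2+q+1)^2},\qquad R=\frac{24q^2}{(q+1)^2(q^2+q+1)^2},\qquad R=\frac{360(q+1)^4}{(q^2+q+1)^2(u_{3,q}+1)^2},$$
each a rational function with numerator of lower degree than denominator, already $<1$ at $q=2$ and decreasing for $q\ge2$; this completes the proof. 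Thus no single step is conceptually hard --- the work is in the combinatorial trichotomy and in organising the finitely many small-dimensional verifications, including confirming that the trichotomy singles out $t=2$, $d_1=d_2=d-1$ as the unique configuration with $M>3\cdot2^{d-2}$.
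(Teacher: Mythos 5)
Your proposal is correct, and while it shares the paper's skeleton (reduce via \eqref{ineq for dual vol} and Theorem~\ref{bd for the dual vol of s} to the numerical inequality \eqref{ineq for dual vol+}, then exploit the approximation $u_{n,q}<K^{2^n}<u_{n,q}+1$ of Lemma~\ref{appro}), the way you verify \eqref{ineq for dual vol+} is genuinely different from the paper's. The paper splits into $t\geq3$ and $t=2$, replaces all $d_i$ by their maximum $d-t+1$ (resp.\ by $d-1,d-2$), takes $\log_K$, and then works through a fairly long list of ad hoc estimates of $\frac{\lambda!}{(d-t+1)!^t}$ and $\frac{(2d-3)!}{(d-1)!(d-2)!}$ for $3\leq d\leq 11$ plus direct computations for small $(d,t)$. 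You instead isolate the sharp combinatorial threshold $M=\sum_i 2^{d_i}\leq 3\cdot 2^{d-2}$, which holds exactly outside the excluded shape $t=2$, $d_1=d_2=d-1$ (your trichotomy via \eqref{bd di} is correct, and note $K^2>q$ follows already from Lemma~\ref{appro} with $n=1$), convert the $K$-powers into $q$-powers to get a bound $R<R^+$ that is monotone decreasing in $q$, and thereby reduce everything to the clean integer check $\frac{(d+N)!}{d_1!\cdots d_t!}<2^{2^d-M+N-1}$ at $q=2$, with the three genuinely tight shapes $(2,(2,1))$, $(3,(1,1,1))$ for $d=3$ and $(3,(2,2,2))$ for $d=4$ handled by exact rational expressions for $R$ (I checked these; they match \eqref{ineq for dual vol+} directly, e.g.\ your $d=3$, $t=3$ value agrees with the paper's $48<2u_{3,q}^2/q^4$). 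Your route buys transparency --- it explains why precisely the excluded decomposition is the obstruction ($M=2^d$ there) and replaces the paper's scattered numerics by one monotone-in-$q$ reduction --- at the cost of a per-shape enumeration and a "routine estimate" for mid-range $d$ that is left at sketch level; this is, however, no less rigorous than the paper's own "by estimating $Q(d,t)$ appropriately" steps, and I verified it closes for $d=5,6,7$ and in general once the shape-dependent bound $2^d-M\geq 2^d(1-t2^{1-t})$ is used. One small point in your favor: you explicitly justify the bound $\Vol(S_i^*)\leq 2u_{d_i,q}^2/q^{d_i+1}$ when $d_i=1$, a case Theorem~\ref{bd for the dual vol of s} (stated for $d\geq2$) does not literally cover but which the paper's proof also uses implicitly.
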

\begin{proof}

Due to \eqref{ineq for dual vol} and Theorem~\ref{bd for the dual vol of s}, we only need to prove \eqref{ineq for dual vol+} holds. We separate the proof into two parts: $t\geq3$ and $t=2$. 

\textbf{Case $(1)$: $t\geq3$}. 

By \eqref{bd di}, $d_i\leq d-t+1$. $\frac{(d_1+\cdots+d_t)!}{d_{1}!\cdots d_{t}!}\cdot\Pi_{i=1}^{t}\frac{2u_{d_i, q}^{2}}{q^{d_i+1}}$ will increase if $d_i$ increase. So to prove \eqref{ineq for dual vol+}, it is enough to show 
    \begin{align}
    \frac{(t(d-t+1))!}{(d-t+1)!^{t}}\cdot (\frac{2u_{d-t+1, q}^{2}}{q^{d-t+2}})^t<\frac{2u_{d, q}^2}{q^{d+1}}.\label{ineq for dual vol++}
    \end{align}
 Since $q\geq2$, \eqref{ineq for dual vol++} holds if
    \begin{align*}
    \frac{(t(d-t+1))!}{(d-t+1)!^{t}}\cdot \frac{u_{d-t+1, q}^{2t}}{q^{t(d-t+1)-d}}< u_{d, q}^2
    \end{align*}
    holds.
    Set $\lambda:=t(d-t+1)$. Since $d\geq3$ and $3\leq t\leq d$, we have $$3\leq d\leq\lambda\leq\frac{(d+1)^2}{4}.$$
    Take $K$ as in Lemma~\ref{appro}. Then $\sqrt{2}<K<q$ and $u_{n, q}<K^{2^n}<u_{n, q}+1$. 
    So \eqref{ineq for dual vol++} holds if 
    \begin{align}
    \frac{\lambda!}{(d-t+1)!^{t}}\cdot K^{2^{d-t+2}\cdot t+d-\lambda}\leq K^{2^{d+1}-2}\label{ineq for dual vol3+}
    \end{align}
    holds. Here $u_{d, q}$ is reduced to $K^{2^{d}-1}$.
By using the relation $\lambda!<2\cdot2^2\cdots2^{\lambda-1}=2^{\frac{1}{2}\lambda(\lambda-1)}< K^{\lambda(\lambda-1)}$ and taking $\log_K$, 
    \eqref{ineq for dual vol3+} is reduced to 
    \begin{align}
    \lambda(\lambda-2)\leq 2^{d+1}\cdot(1-2^{1-t}\cdot t)-2-d.\label{ineq for dual vol4+}
    \end{align}
By noting that $\lambda(\lambda-2)\leq\frac{(d+1)^2}{4}(\frac{(d+1)^2}{4}-2)$ and $1-2^{1-t}\cdot t$ increases as $t\geq3$ increases, one can check directly that inequality \eqref{ineq for dual vol4+} holds when 
$d$ and $t$ satisfy the following conditions:
\begin{enumerate}
    \item[1.]  ~$d\geq12 \text{~and~} t\geq3$;
    \item[2.]  ~$d\in\{11, 10\} \text{~and~} t\geq4$;
    \item[3.]  ~$d=9 \text{~and~} t\geq5$;
    \item[4.]  ~$d=8 \text{~and~} t\geq6$.
\end{enumerate}
Now we need to make a better estimate of this quantity $\frac{\lambda!}{(d-t+1)!^{t}}$ to show \eqref{ineq for dual vol3+} holds under some conditions. Set $Q(d, t):=\frac{\lambda!}{(d-t+1)!^{t}}$, $L(d, t):=Q(d, t)\cdot K^{2^{d-t+2}\cdot t+d-\lambda}$ and $R(d):=K^{2^{d+1}-2}$. By estimating $Q(d, t)$ appropriately, we have the following list:

\begin{align*}
    &{}d=11, &{}t=3,\quad  &{}Q(d, t)<2^{38}<K^{76}, &{}L(d, t)<K^{76+2^{10}\cdot3}<R(11);\\
    &{}d=10, &{}t=3, \quad &{}Q(d, t)<2^{34}<K^{68}, &{}L(d, t)<K^{68+2^{9}\cdot3}<R(10);\\
    &{}d=9, &{}t=3, 4, \quad &{}Q(d, t)<2^{42}<K^{84}, &{}L(d, t)<K^{84+2^{8}\cdot3}<R(9);\\
    &{}d=8, &{}t=3, 4, 5, \quad&{}Q(d, t)<2^{39}<K^{78}, &{}L(d, t)<K^{78+2^{7}\cdot3}<R(8);\\
    &{}d=7, &{}t\geq3, \quad &{}Q(d, t)<2^{28}<K^{56}, &{}L(d, t)<K^{56+2^{6}\cdot3}<R(7);\\
    &{}d=6, &{}t\geq3, \quad &{}L(d, t)<K^{122}<R(6);\\
    &{}d=5, &{}t\geq4, \quad &{}L(d, t)<K^{56}<R(5);\\
    &{}d=4, &{}t=4, \quad &{}L(d, t)<K^{26}<R(4).\\
\end{align*}
In the above list, it should be noted that we use the relation $K^{2^{d-t+2}\cdot t+d-\lambda}\leq K^{2^{d-1}\cdot3}$ in the estimation process when $7\leq d\leq11$. For $4\leq d\leq6$, it is just a direct comparison by estimating $Q(d, t)$ appropriately.







Following from the above discussion, it remains to prove \eqref{ineq for dual vol++} holds when $3\leq d\leq5$ and $t=3$.
Rewrite \eqref{ineq for dual vol++} as
    \begin{align*}
    \frac{(t(d-t+1))!}{(d-t+1)!^{t}}\cdot \frac{2^t\cdot u_{d-t+1, q}^{2t}}{q^{t(d-t+2)}}<\frac{2u_{d, q}^2}{q^{d+1}}.
    \end{align*}
Denote by $L(d, t, q)$ the left hand side of the above inequality and $R(d, t, q)$ the right hand side of the above inequality.

If $d=5, t=3$, then $L(d, t, q)=\frac{2^2\cdot9!}{3!^3}\cdot\frac{2u_{3, q}^{6}}{q^{12}}<2^{13}\cdot\frac{2u_{3, q}^{6}}{q^{12}}\leq2u_{3, q}^6\cdot q$, and $R(d, t, q)=\frac{2u_{5, q}^2}{q^6}>\frac{2u_{3, q}^8}{q^6}$. Since $u_{3, q}^{2}>q^8$, $L(d, t, q)<R(d, t, q)$.

If $d=4, t=3$, then $L(d, t, q)=360\cdot\frac{2u_{2, q}^{6}}{q^9}$, and $R(d, t, q)=\frac{2u_{4, q}^2}{q^5}$. For $q=2$, we have $u_{2, 2}=6$, $u_{4, 2}=1806$, then $L(d, t, q)<R(d, t, q)$ by calculation; for $q\geq3$, we have the estimate $360<q^6$, hence $L(d, t, q)<\frac{2u_{2, q}^{6}}{q^3}<\frac{2u_{2, q}^{8}}{q^5}<R(d, t, q)$.

If $d=3, t=3$, then $L(d, t, q)=48$, and $R(d, t, q)=\frac{2u_{3, q}^2}{q^4}>2(1+u_{2, q})^2=2(1+q+q^2)^2\geq98$. Hence $L(d, t, q)<R(d, t, q)$.

\textbf{Case $(2)$: $t=2$}. 

In this case, for the same reason as in the first case, we only need to prove \eqref{ineq for dual vol+} under the condition that $d_1=d-1, d_2=d-2$, that is, it suffices to show
\begin{align*}
    \frac{(2d-3)!}{(d-1)!(d-2)!}\cdot \frac{2u_{d-1, q}^{2}}{q^{d}}\cdot \frac{2u_{d-2, q}^{2}}{q^{d-1}}<\frac{2u_{d, q}^{2}}{q^{d+1}}.
\end{align*}
Since $u_{d, q}=u_{d-1, q}(1+u_{d-1, q})$, we can rewrite the above inequality as
\begin{align}
    \frac{(2d-3)!}{(d-1)!(d-2)!}\cdot \frac{2u_{d-2, q}^2}{q^{d-2}}< (1+u_{d-1, q})^2.\label{ineq for dual vol5+}
\end{align}
Take $K$ as in Lemma~\ref{appro}. Since $\sqrt{2}<K<q$ and $u_{n, q}<K^{2^n}<u_{n, q}+1$, we have 
\begin{align*}
    \frac{(2d-3)!}{(d-1)!(d-2)!}\cdot\frac{2u_{d-2, q}^2}{q^{d-2}}&{}<2^{(2d-3)(d-2)}\cdot\frac{K^{2^{d-1}}}{q^{d-3}}\\
    &{}< K^{(2d-3)(2d-4)-d+3+2^{d-1}},
\end{align*}
and $(1+u_{d-1, q})^2>K^{2^d}$. By taking $\log_K$, proving \eqref{ineq for dual vol5+} is reduced to proving the following inequality
\begin{align*}
    (2d-3)(2d-4)-d+3\leq2^{d-1}.
\end{align*}
By calculations, the above inequality holds when $d\geq9$.

To finish the proof, we need to make a better estimate of this quantity
$\frac{(2d-3)!}{(d-1)!(d-2)!}$ when $3\leq d\leq8$. For $3\leq d\leq8$, it can be verified by estimating $\frac{(2d-3)!}{(d-1)!(d-2)!}$ appropriately that 
\begin{align*}
    \frac{(2d-3)!}{(d-1)!(d-2)!}\cdot \frac{2u_{d-2, q}^2}{q^{d-2}}<K^{2^d}<(1+u_{d-1, q})^2.
\end{align*}


\end{proof}

To bound the volume of $P^*$ when $P$ is decomposed into two $(d-1)$-dimensional $\frac{1}{q}$-lc Fano simplices, we apply the result figured out through integration in \cite{BKN22}.

\begin{lem}[{cf. \cite[$(6.1), (6.2)$]{BKN22}}]\label{integration}
Fix two integers $d\geq3$ and $q\geq2$. Let $P$ be a $d$-dimensional minimal $\frac{1}{q}$-lc Fano polytope. Suppose $P$ is not a simplex and is decomposed into two $(d-1)$-dimensional $\frac{1}{q}$-lc Fano simplices. Keep the same notation as in Theorem~\ref{decomp}. Suppose the barycentric coordinates of $\0$ with respect to $S_i$ are $\beta_{i, 1}, \cdots, \beta_{i, d}$ for each $i=1, 2$. Then
$$\Vol(P^*)\leq\frac{1}{\beta_{1, 1}\cdots\beta_{1, d}}+\frac{1}{\beta_{2, 1}\cdots\beta_{2, d}}.$$
Furthermore, if the equality holds, then for each $i=1, 2$, there exists two of $\beta_{i, 1}, \cdots, \beta_{i, d}$ are equal.
\end{lem}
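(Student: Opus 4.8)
The plan is to translate the polytope-theoretic statement into a purely volumetric integral computation, following the integration method of \cite{BKN22}. Since $P$ is minimal and decomposes as $P = \conv(S_1 \cup S_2)$ with $\dim S_1 = \dim S_2 = d-1$ and $r_2 = |\vex(S_2) \cap \vex(S_1)|$, the relation \eqref{sigma di} forces $2(d-1) = d + r_2$, so $r_2 = d-2$; thus $S_1$ and $S_2$ share a common $(d-2)$-dimensional face $F$, and $P$ is the convex hull of $F$ together with one extra vertex $w_1$ from $S_1$ and one extra vertex $w_2$ from $S_2$, lying on opposite sides of the hyperplane $\langle F \rangle$ spanned by $F$. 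First I would set up coordinates so that $\langle F\rangle = \bR^{d-1}\times\{0\}$, with $w_1$ and $w_2$ having last coordinates of opposite sign; then $P^*$ is described by the facet inequalities of $P$, and $\0 \in \inte(P^*)$.

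Next I would compute $\vol_\ZZ(P^*)$ by slicing $P^*$ with hyperplanes parallel to $\langle F\rangle^* $-direction — i.e. integrate over the last coordinate $x_d$ of $P^*$. For each fixed value of $x_d$, the slice of $P^*$ is a polytope in $\bR^{d-1}$ whose defining inequalities come from the facets of $P$; the facets of $P$ through $w_1$ cut out (as $x_d$ varies over one sub-interval) rescaled copies of $S_1^*$ (viewed inside the $(d-1)$-dimensional quotient), and symmetrically for $w_2$. Carrying out the one-dimensional integral, the contribution of the $S_1$-side is $\frac{1}{d}\cdot\frac{\text{(something)}}{\beta_{1,1}\cdots\beta_{1,d}}$ and similarly for $S_2$, exactly as in \cite[(6.1),(6.2)]{BKN22}; after accounting for the normalization $\Vol(P^*)=d!\,\vol_\ZZ(P^*)$ one obtains the bound
\[
\Vol(P^*)\leq \frac{1}{\beta_{1,1}\cdots\beta_{1,d}}+\frac{1}{\beta_{2,1}\cdots\beta_{2,d}}.
\]
Here I would use Corollary~\ref{dual simplex vol bd} applied to each $S_i$ (in its own $(d-1)$-dimensional lattice) to identify the $(d-1)!\,\vol_\ZZ(S_i^*)$ with $\frac{1}{\beta_{i,1}\cdots\beta_{i,d-1}}$ up to the relevant factor, taking care that the barycentric coordinates of $\0$ with respect to $S_i$ and $S_i^*$ coincide.

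For the equality statement, I would track when each inequality used above is sharp. The inequality $\Vol(P^*)\leq \Vol(P^*\text{-slice decomposition})$ is actually an identity (the slicing is exact); the genuine loss comes from bounding the cross-sectional areas by those of the dilates of $S_i^*$, which is where the monotonicity \cite[(3.1)]{BKN22} or the simplex-volume bound Theorem~\ref{simplex vol bd} is invoked with $n = |\inte(\text{slice})\cap\ZZ^{d-1}|$. Equality in Theorem~\ref{simplex vol bd} (equivalently Corollary~\ref{dual simplex vol bd} / \cite[Lemma~5]{Pik01}) is known to force a relation among the barycentric coordinates — concretely, that two of $\beta_{i,1},\dots,\beta_{i,d}$ are equal — which is precisely the asserted conclusion. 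So I would quote the equality case of Pikhurko's bound (as used in the proof of Theorem~\ref{simplex vol bd}) for each $i=1,2$ separately.

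The main obstacle I anticipate is the bookkeeping in the slicing argument: one must verify carefully that, for $x_d$ in the appropriate interval, the slice of $P^*$ really is (a translate of) a dilate of $S_1^*$ inside $\bR^{d-1}$, that the dilation factors are affine in $x_d$ with the correct endpoints $0$ and $1$, and that the lattice-point count $n$ appearing in the intermediate bound is controlled by the $\frac{1}{q}$-lc hypothesis on $S_i$ — this is exactly the content imported from \cite{BKN22}, so the proof will mostly consist of checking that their computation \cite[(6.1),(6.2)]{BKN22} applies verbatim in the $\frac{1}{q}$-lc setting (it does, since only $\0\in\inte(\cdot)$ is used, cf. Remark~\ref{rem1}), together with invoking Corollary~\ref{dual simplex vol bd} and the equality case of Theorem~\ref{simplex vol bd}.
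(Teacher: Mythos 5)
Your overall plan --- rederive the bound by the integration method of \cite{BKN22} and note that only $\0\in\inte(\cdot)$ is used so the computation transfers to the $\frac1q$-lc setting --- is the same route the paper takes (the paper simply imports \cite[(6.1),(6.2)]{BKN22} without reproving them). However, your reconstruction has a genuine structural error at the very first step. From \eqref{sigma di} you correctly get $r_2=d-2$, but $r_2$ is the \emph{number of shared vertices}, not the dimension of a shared face: each $S_i$ is a $(d-1)$-dimensional simplex with $d$ vertices, so $S_1$ and $S_2$ share $d-2$ vertices (a $(d-3)$-dimensional common face) and each has \emph{two} private vertices, giving $|\vex(P)|=d+2$ in accordance with \eqref{number of vert}. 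Your picture --- a common $(d-2)$-dimensional face $F$ with a single apex $w_i$ on each side --- yields only $d+1$ vertices, contradicts \eqref{number of vert}, and (since $d+1$ such points are generically affinely independent) would typically make $P$ a simplex, contrary to hypothesis. Consequently the whole slicing setup ($\langle F\rangle=\bR^{d-1}\times\{0\}$, apexes on opposite sides, slices of $P^*$ being dilates of $S_1^*$) is built on the wrong geometry. In the correct picture each $S_i$ spans a linear hyperplane through $\0$, these two hyperplanes meet in a $(d-2)$-dimensional subspace, $P^*$ equals the intersection of the two unbounded prisms $\{m:\langle m,v\rangle\geq-1,\ \forall v\in\vex(S_i)\}$, and the integration in \cite{BKN22} is carried out relative to this configuration; moreover the central computation in your write-up is literally left as ``(something)'', so the step that produces the product of \emph{all} $d$ coordinates $\beta_{i,1}\cdots\beta_{i,d}$ (rather than the $d-1$ largest, which is all Corollary~\ref{dual simplex vol bd} would give) is never actually performed.

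The equality analysis is also misattributed. Theorem~\ref{simplex vol bd} (Pikhurko's bound) carries no equality characterization in the paper, and it is not true in any stated form that equality there forces two barycentric coordinates to coincide; so ``quoting the equality case of Pikhurko's bound'' is not available. In the cited computation of \cite{BKN22}, the conclusion that two of $\beta_{i,1},\cdots,\beta_{i,d}$ must be equal comes from the sharpness analysis of the integration itself --- an AM--GM-type step involving the two coordinates attached to the two \emph{non-shared} vertices of $S_i$ --- which is exactly the piece of structure your setup erased by allowing only one private vertex per simplex. This equality clause is essential downstream (it is what makes the two equalities in Proposition~\ref{2nd bd} incompatible with the equality case of Theorem~\ref{lower bd for bc d+1}, whose extremal coordinates are pairwise distinct), so it cannot be waved through. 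To repair the proof you would need to redo the argument with the correct combinatorics ($d-2$ shared vertices, two private vertices per $S_i$) and carry out the integral and the AM--GM step explicitly, or else justify carefully that the computation of \cite[(6.1),(6.2)]{BKN22} applies verbatim in the $\frac1q$-lc setting.
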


\begin{prop}\label{2nd bd}
Fix two integers $d\geq3$ and $q\geq2$. Let $P$ be a $d$-dimensional minimal $\frac{1}{q}$-lc Fano polytope. Suppose $P$ is not a simplex and is decomposed into two $(d-1)$-dimensional $\frac{1}{q}$-lc Fano simplices. Then 
$$\Vol(P^*)<\frac{2u_{d, q}^2}{q^{d+1}}.$$
\end{prop}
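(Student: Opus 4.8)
The plan is to combine Lemma~\ref{integration} with the lower bounds on products of barycentric coordinates obtained in Section~3. By Lemma~\ref{integration}, writing $\beta_{i,1}\geq\cdots\geq\beta_{i,d}$ for the barycentric coordinates of $\0$ with respect to the $(d-1)$-dimensional $\frac{1}{q}$-lc Fano simplex $S_i$ (for $i=1,2$), we have
\[
\Vol(P^*)\leq\frac{1}{\beta_{1,1}\cdots\beta_{1,d}}+\frac{1}{\beta_{2,1}\cdots\beta_{2,d}},
\]
so it suffices to bound each summand. Since each $S_i$ is $(d-1)$-dimensional, Theorem~\ref{lower bd for bc d+1} applied in dimension $d-1$ gives $\beta_{i,1}\cdots\beta_{i,d}\geq\frac{q^{d+1}}{u_{d,q}^2}$ (here $d+1=(d-1)+2$ and $u_{d,q}=u_{(d-1)+1,q}$), hence each summand is at most $\frac{u_{d,q}^2}{q^{d+1}}$ and therefore $\Vol(P^*)\leq\frac{2u_{d,q}^2}{q^{d+1}}$. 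The point of the proposition is to get a \emph{strict} inequality, so the real work is to rule out the equality case.

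The key step is to show that the two terms cannot simultaneously attain their common maximum $\frac{u_{d,q}^2}{q^{d+1}}$. First I would note that by the equality characterization in Theorem~\ref{lower bd for bc d+1} (in dimension $d-1$), $\frac{1}{\beta_{i,1}\cdots\beta_{i,d}}=\frac{u_{d,q}^2}{q^{d+1}}$ forces
\[
(\beta_{i,1},\cdots,\beta_{i,d})=\Bigl(\frac{q}{1+u_{1,q}},\cdots,\frac{q}{1+u_{d-1,q}},\frac{q}{u_{d,q}}\Bigr),
\]
in which \emph{all} $d$ barycentric coordinates are distinct (since the $u_{j,q}$ are strictly increasing). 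But by the ``furthermore'' clause of Lemma~\ref{integration}, if $\Vol(P^*)$ equals $\frac{2u_{d,q}^2}{q^{d+1}}$ — i.e. both summands are maximal — then each $S_i$ has two equal barycentric coordinates, contradicting the forced all-distinct pattern. Hence the equality in Lemma~\ref{integration} cannot occur with both products minimal, and if at least one product strictly exceeds $\frac{q^{d+1}}{u_{d,q}^2}$ then already $\Vol(P^*)<\frac{u_{d,q}^2}{q^{d+1}}+\frac{u_{d,q}^2}{q^{d+1}}=\frac{2u_{d,q}^2}{q^{d+1}}$. Either way the strict inequality holds.

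The main obstacle I anticipate is making sure the two cases interlock cleanly: one must handle separately the case where the inequality in Lemma~\ref{integration} is already strict (then a non-strict bound on each product suffices) and the case where that inequality is an equality (then the ``furthermore'' clause applies, forcing a repeated coordinate in each $S_i$, which is incompatible with the minimum of $f_{d}$—wait, of $f_{(d-1)+1}$, i.e. $\prod\beta_{i,j}$ over all $d$ indices—being attained only at the all-distinct tuple $y[q](d)$ in the relevant dimension). A small point worth checking is the bookkeeping of indices when specializing Theorem~\ref{lower bd for bc d+1} from dimension $d-1$ to the values $u_{d,q}$, $q^{d+1}$; this is routine but should be stated carefully so the reader sees why the bound $\frac{q^{d+1}}{u_{d,q}^2}$ and the all-distinct equality tuple are the correct ones. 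No delicate estimation is needed here, in contrast with Proposition~\ref{1st bd}; the argument is entirely structural.
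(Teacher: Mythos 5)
Your proposal is correct and follows essentially the same route as the paper: bound $\Vol(P^*)$ by the sum of the two reciprocal products via Lemma~\ref{integration}, bound each product from below by Theorem~\ref{lower bd for bc d+1} applied in dimension $d-1$, and obtain strictness by noting that equality in the integration bound forces a repeated barycentric coordinate in each $S_i$, which is incompatible with the all-distinct equality tuple of Theorem~\ref{lower bd for bc d+1}. Your write-up merely makes explicit the incompatibility that the paper states in one line, and the index bookkeeping ($u_{(d-1)+1,q}=u_{d,q}$, exponent $(d-1)+2=d+1$) is handled correctly.
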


\begin{proof}
Keep the same notation as in Theorem~\ref{decomp}. Suppose the barycentric coordinates of $\0$ with respect to $S_i$ are $\beta_{i, 1}, \cdots, \beta_{i, d}$ for each $i=1, 2$. Then by Lemma~\ref{integration}, we have
\begin{align}
    \Vol(P^*)\leq\frac{1}{\beta_{1, 1}\cdots\beta_{1, d}}+\frac{1}{\beta_{2, 1}\cdots\beta_{2, d}}.\label{eq1}
\end{align}

Applying Theorem~\ref{lower bd for bc d+1}, for each $i=1, 2$, we have

\begin{align}
    \frac{1}{\beta_{i, 1}\cdots\beta_{i, d}}\leq\frac{u_{d, q}^2}{q^{d+1}}.\label{eq2}
\end{align}

So we get the conclusion by noting that the equality in \eqref{eq1} and the equality in \eqref{eq2} cannot hold simultaneously.
\end{proof}

\section{Proof of main results}

\begin{proof}[Proof of Theorem~\ref{main thm}]
If $P_X$ is not a simplex, then $(-K_X)^d=\Vol(P_X^*)<\frac{2u_{d, q}^2}{q^{d+1}}$ by Theorem~\ref{bounds for d3}, Proposition~\ref{1st bd} and Proposition~\ref{2nd bd}. If $P_X$ is a simplex, then the conclusion follows from Theorem~\ref{bd volume p1}.
\end{proof}

\begin{proof}[Proof of Theorem~\ref{dual vol bd for lattice polytope}]
 We only need to consider the case when $P$ is a $d$-dimensional minimal $\frac{1}{q}$-lc Fano polytope. Then the conclusions follow from Theorem~\ref{bd for the dual vol of s}, Proposition~\ref{1st bd} and Proposition~\ref{2nd bd}.
\end{proof}

\begin{proof}[Proof of Theorem~\ref{vol bd for simplex}]
By assumption, $\frac{1}{q}S\cap\ZZ^d=\{\0\}$. By abuse of notation, we also assume $\beta_1\geq\cdots\geq\beta_{d+1}$ are the barycentric coordinates of $\0$ with respect to $S$. Then by applying Theorem~\ref{simplex vol bd}, we have 
$$\vol_{\ZZ}(\frac{1}{q}S)\leq\frac{1}{d!\beta_1\cdots\beta_d}.$$
An observation is that $(\beta_1, \cdots, \beta_{d+1})$ satisfies conditions \eqref{relation1}, \eqref{relation2} and \eqref{relation3} (see Remark~\ref{rem2}). So Theorem~\ref{lower bd for bc d} applies. We have 
\begin{align*}
    \vol_{\ZZ}(S)&{}\leq\frac{q^d}{d!\beta_1\cdots\beta_d}\\
    &{}\leq\frac{2u_{d, q}^2}{d!q}.
\end{align*}
To conclude the proof, we only need to show that $\inte(\frac{1}{q}S)\cap\ZZ^d=\{\0\}$, where $S=\conv((1+u_{1, q})e_1-qe, \cdots, (1+u_{d-1, q})e_{d-1}-qe, -u_{d, q}e_d-qe, u_{d, q}e_d-qe)$. Moreover, we claim that $\frac{1}{q}=\max\{t|\inte(tS)\cap\ZZ^d=\{\0\}\}$.

Take a real number $q'$ such that $0<q'\leq q$. Suppose $v\in\inte(\frac{1}{q'}S)\cap\ZZ^d$ has barycentric coordinates $\lambda_1, \cdots, \lambda_{d+1}$,  i.e., $$\Sigma_{i=1}^{d+1}\lambda_i=1, \lambda_i>0, \forall i\in\{1, \cdots, d+1\},$$ and $$v=\Sigma_{i=1}^{d-1}\frac{\lambda_i}{q'}((1+u_{i, q})e_i-qe)+\frac{\lambda_d}{q'}(-u_{d, q}e_d-qe)+\frac{\lambda_{d+1}}{q'}(u_{d, q}e_d-qe)\in\ZZ^d.$$ Then for $i\in\{1, \cdots, d-1\}$, $\lambda_i=\frac{q'z_i+q}{1+u_{i, q}}$ for some $z_i\in\ZZ$, and $\lambda_{d+1}-\lambda_d=\frac{q'z}{u_{d, q}}$ for some $z\in\ZZ$.

If $q'=q$, then $\lambda_i>0$ implies $\lambda_i\geq\frac{q}{1+u_{i, q}}$
for $i\in\{1, \cdots, d-1\}$. By \eqref{sum1} and the assumption that $v$ is an interior lattice point of $\frac{1}{q}S$, we must have $\lambda_i=\frac{q}{1+u_{i, q}}$ for $i\in\{1, \cdots, d-1\}$. And then $\lambda_d+\lambda_{d+1}=\frac{q}{u_{d, q}}$, which implies $\lambda_d=\lambda_{d+1}=\frac{q}{2u_{d, q}}$. So we conclude that $\inte(\frac{1}{q}S)\cap\ZZ^d=\{\0\}$.

If $0<q'<q$, then we can show that $e_d\in\inte(\frac{1}{q'}S)\cap\ZZ^d$ by taking $\lambda_i=\frac{q}{1+u_{i, q}}$
for $i\in\{1, \cdots, d-1\}$, $\lambda_d=\frac{q-q'}{2u_{d, q}}$ and $\lambda_{d+1}=\frac{q+q'}{2u_{d, q}}$.

\end{proof}

\section*{Acknowledgments} I would like to express my gratitude to Caucher Birkar for his suggestions and constant support. I am grateful to Florin Ambro for a lot of helpful discussions.


\begin{thebibliography}{99}

\bibitem{Ambro16} F.~ Ambro, {\it Variation of log canonical thresholds in linear systems}, Int. Math. Res. Not.,
2016 (2016), no.~14, 4418–4448.

\bibitem{Aver12} G. Averkov, {\it On the size of lattice simplices with a single interior lattice
point}, SIAM J. Discrete Math. 26 (2012), no. 2, 515–526.

\bibitem{AKN16} G.~Averkov, J.~Krümpelmann, and B.~ Nill, {\it Largest integral simplices with one interior integral point: solution of Hensley’s conjecture and related results}, Adv. Math. 274 (2015),
118–166.

\bibitem{Bar02} A. Barvinok, {\it A Course in Convexity}, Graduate Studies in Mathematics,
vol. 54, American Mathematical Society, Providence, RI, 2002.

\bibitem{BB93}  A. A.~Borisov and L. A. Borisov, {\it Singular toric Fano varieties}, 
Sbornik Mathematics 75 (1993), no. 1, 277–283.

\bibitem{BKN22} G.~Balletti, A.~M.Kasprzyk, B.~Nill, {\it On the maximum dual volume of a canonical Fano polytope}, Forum of Mathematics, Sigma. 10 (2022), e109.


\bibitem{Bir21} C.~Birkar, \emph{Singularities of linear systems and boundedness of Fano varieties}, Ann. of Math. (2) 193 (2021), no. 2, 347--405.

\bibitem{Con02} H. Conrads, {\it Weighted projective spaces and reﬂexive simplices}, Manuscripta Math. 107 (2002),
no. 2, 215–227.

\bibitem{CLS11} D. A. Cox, J. B. Little, and H. K. Schenck, {\it Toric Varieties}, Graduate
Studies in Mathematics, vol. 124, American Mathematical Society, Providence, RI, 2011.

\bibitem{Fulton} W. Fulton, {\it Introduction to toric varieties}, Annals of Mathematics Studies, vol. 131, Princeton
University Press, Princeton, NJ, 1993. The William H. Roever Lectures in Geometry.

\bibitem{GL87}  P. M.~Gruber and C. G. Lekkerkerker, {\it Geometry of Numbers}, 2nd ed. Bibliotheca Mathematica VIII. Amsterdam, London: Wolters-Noordhoff Publishing, Groningen; North-Holland Publishing Co., 1987.

\bibitem{Jiang13} C.~Jiang, \emph{Bounding the volumes of singular weak log del Pezzo surfaces}, Internat. J. Math. 24 (2013), no. 13, 1350110, 27 pp.


\bibitem{JZ24} C.~Jiang, Y.~Zou, \emph{An effective upper bound for anti-canonical volumes of singular Fano threefolds}, Journal of the Institute of Mathematics of Jussieu. 23 (2024), no. 6, 2631–2646.

\bibitem{Kas10} A.~ M.Kasprzyk, {\it Canonical toric Fano threefolds}, Canad. J. Math. 62 (2010), no. 6, 1293–1309.


\bibitem{Nill07} B.~Nill, {\it Volume and lattice points of reflexive simplices}, Discrete Comput.
Geom. 37 (2007), no. 2, 301–320.

\bibitem{Pik01} O.~Pikhurko, {\it Lattice points in lattice polytopes}, Mathematika 48 (2001), no.~1–2, 15–24.



\end{thebibliography}
\end{document}